\documentclass[12pt,a4paper]{amsart}
\usepackage[utf8]{inputenc}
\usepackage{amsfonts}
\usepackage{amssymb}
\usepackage{amsmath}
\usepackage{amsthm}
\usepackage{cite}
\usepackage{enumitem}
\usepackage{tikz}
\usepackage{subfigure}
\usepackage{dsfont}

\theoremstyle{plain}
\newtheorem{thm}{Theorem}

\newtheorem{lem}[thm]{Lemma}
\newtheorem{prop}[thm]{Proposition}
\newtheorem{cor}[thm]{Corollary}
\newtheorem{rmk}[thm]{Remark}

\textwidth166mm
\textheight212mm
\hoffset-2cm
\voffset-7mm

\newcommand{\svec}[2]{
\ensuremath{
\begin{pmatrix}
#1 \\ #2 \\
\end{pmatrix}}}

\providecommand{\sm}{\setminus}
\providecommand{\N}{\mathbb{N}}
\providecommand{\R}{\mathbb{R}}
\providecommand{\Z}{\mathbb{Z}}
\providecommand{\C}{\mathbb{C}}

\newcommand{\norm}[1]{\left\lVert #1 \right\rVert}

\renewcommand{\i}{\mathrm{i}}
\renewcommand{\L}[1]{L_\mathrm{rad}^{#1}(\R^3)}

\newcommand{\step}[2]{\vspace*{0.2cm} \underline{Step #1}: \textit{#2} \vspace*{0.1cm}}

\makeatletter
\newcommand{\pushright}[1]{\ifmeasuring@#1\else\omit\hfill$\displaystyle#1$\fi\ignorespaces}
\makeatother

\newcommand{\absatz}{\vspace*{0.5cm}}

\setitemize{itemsep=+2pt}
\setenumerate{itemsep=+2pt}
\setlength{\parindent}{0cm}

\begin{document}

\allowdisplaybreaks

\title{Bifurcations of nontrivial solutions of a cubic Helmholtz system}

\author{Rainer Mandel, Dominic Scheider}
\address{R. Mandel, D. Scheider\hfill\break
Karlsruhe Institute of Technology \hfill\break
Institute for Analysis \hfill\break
Englerstra{\ss}e 2 \hfill\break
D-76131 Karlsruhe, Germany}
\email{rainer.mandel@kit.edu}
\email{dominic.scheider@kit.edu}
\date{\today}

\subjclass[2010]{Primary: 35J05, Secondary: 35B32}
\keywords{Nonlinear Helmholtz sytem, bifurcation}

\begin{abstract}
This paper presents local and global bifurcation results for radially symmetric solutions of the cubic Helmholtz system
\begin{equation*}
	\begin{cases}
	-\Delta u - \mu u = \left( u^2 + b \: v^2  \right) u		 &\text{ on } \R^3, 
		\\
	-\Delta v - \nu v = \left( v^2 + b \: u^2  \right) v			 &\text{ on } \R^3.
	\end{cases}
\end{equation*}
It is shown that every point along any given branch of radial semitrivial solutions $(u_0,
0, b)$ or diagonal solutions $(u_b, u_b, b)$ (for $\mu = \nu$) is a bifurcation point.
Our analysis is based on a detailed investigation of the oscillatory behavior of solutions at infinity
that are shown to decay like $\frac{1}{|x|}$ as $|x|\to\infty$. 
\end{abstract}

\maketitle

\allowdisplaybreaks

\section{Introduction and main results}

Systems of two coupled nonlinear Helmholtz equations arise, for instance, 
in models of nonlinear optics. In this paper, we analyze the physically relevant and technically easiest
case of a Kerr-type nonlinearity in $N = 3$ space dimensions, that is, we study the system
\begin{equation}\label{eq_system}\tag{H}
	\begin{cases}
	-\Delta u - \mu u = \left( u^2 + b \: v^2  \right) u	&\text{ on } \R^3, 
		\\
	-\Delta v - \nu v = \left( v^2 + b \: u^2  \right) v	&\text{ on } \R^3
	\end{cases}
\end{equation}
for given $\mu, \nu > 0$ and a constant coupling parameter $b \in \R$. 
We are mostly interested in existence results for fully nontrivial
radially symmetric solutions of this system that we will obtain using bifurcation theory. Such an approach is new in the context of nonlinear Helmholtz equations or systems.
In order to describe the methods used in earlier related works we briefly discuss the available results
for scalar nonlinear Helmholtz equations of the form
\begin{align}\label{eq_helmholtz}
	- \Delta u - \lambda u = Q(x) |u|^{p-2} u \qquad \text{on } \R^N.
\end{align}
Here, the main difficulty is that solutions typically
oscillate and do not belong to $H^1(\R^N)$. In the past years, Ev\'{e}quoz and Weth developed several
methods allowing to find nontrivial solutions of such equations under certain conditions on $Q$ and $p$,
some of which we wish to mention. In~\cite{EvequozWeth_real, EvequozWeth_branch}, they discuss the case of
compactly supported $Q$ and $2 < p < 2^\ast:=\frac{2N}{N-2}$. The idea in~\cite{EvequozWeth_real} is to solve
an exterior problem where the nonlinearity vanishes and knowledge about the far-field expansion of solutions is
available. The remaining problem on a bounded domain can be solved using variational techniques. In
particular, Ev\'{e}quoz and Weth provide a result about the existence and asymptotic behavior of solutions in
the radially symmetric setting, see Theorem~5.2~of~\cite{EvequozWeth_real}.
The approach in~\cite{EvequozWeth_branch} uses Leray-Schauder continuation with respect to the parameter
$\lambda$ in order to find solutions of~\eqref{eq_helmholtz}.
Existence of solutions under the assumption that $Q \in L^\infty(\R^N)$ decays as $|x| \to \infty$ or is
periodic is proved  in~\cite{EvequozWeth} using a dual variational approach, which yields (dual) ground
state solutions and, in the case of decaying $Q$, infinitely many bound states. The technique relies on the
Limiting Absorption Principle of Guti\'{e}rrez, see the explanations before Theorem~6 in~\cite{Gutierrez},
which leads to the additional constraint $\frac{2(N+1)}{N-1} < p < 2^\ast$. Furthermore, assuming that $Q$ is
radial, the existence of a continuum of radially symmetric solutions of~\eqref{eq_helmholtz} has been shown
by Montefusco, Pellacci and the first author in~\cite{radial}. These results rely on ODE techniques and 
only require $p > 2$ and a monotonicity assumption on $Q$.

\absatz

To our knowledge, the only available result on nonlinear Helmholtz systems like~\eqref{eq_system} has been
provided by the authors in~\cite{own} where, using the methods developed in~\cite{EvequozWeth}, 
the existence of a nontrivial dual ground state solution  is proved for the system
\begin{align*}
	&\begin{cases}
		- \Delta u - \mu u = 
		a(x) \left( |u|^\frac{p}{2} + b(x) |v|^\frac{p}{2}  \right)
		|u|^{\frac{p}{2} - 2}u 						& \text{on } \R^N,
		\\
		- \Delta v - \nu v \, = a(x) \left( |v|^\frac{p}{2} + b(x) |u|^\frac{p}{2}  \right)
		|v|^{\frac{p}{2} - 2}v					  		& \text{on } \R^N,
		\\
		u, v \in L^p(\R^N)
	\end{cases}
\end{align*}
for $N \geq 2$, certain nonnegative, $\Z^N$-periodic coefficients $a, b \in L^\infty(\R^N)$ with $0\leq b(x)\leq p-1$ and
$\frac{2(N+1)}{N-1} < p < 2^\ast$. Under additional easily verifiable
assumptions  the ground state can be shown to be fully nontrivial, i.e., both components are nontrivial.
Given the above assumptions this result does not apply in the case of a cubic nonlinearity $p=4$
when $N=3$ which we dicuss in the present paper. In contrast to~\cite{own} we construct fully nontrivial,
radially symmetric solutions for arbitrarily large and small $b\in\R$ that, however, need not be dual ground states.  

\absatz

Our results are inspired by known bifurcation results for the nonlinear Schrödinger system 
\begin{align}\label{eq_systemSchroed}
	\begin{cases}
		- \Delta u + \lambda_1 u = \mu_1 u^3 + b \: u v^2		& \text{on } \R^N,
		\\
		- \Delta v + \lambda_2 v = \mu_2 v^3 + b \: v u^2			 & \text{on } \R^N,
		\\
		u, v \in H^1(\R^N), \quad u > 0, v > 0
	\end{cases}
\end{align}
where one assumes $\lambda_1,\lambda_2>0$ in contrast to \eqref{eq_system}. We focus on bifurcation results by Bartsch,
Wang and Wei in~\cite{Bartsch1} and Bartsch, Dancer and Wang in~\cite{Bartsch2} and refer to the respective introductory sections for a general overview of methods and results for~\eqref{eq_systemSchroed}. In the first-mentioned paper positive solutions
of \eqref{eq_systemSchroed} are found in Theorem~1.1 where the authors show that a continuum consisting of
positive radially symmetric solutions $(u, v, \lambda_1, \lambda_2, \mu_1, \mu_2, b)$ with topological dimension at least $5$ bifurcates
from a two-dimensional set of semipositive solutions $(u, v) = (u_{\lambda_1, \mu_1}, 0)$ parametrized by
$\lambda_1,\mu_1>0$. The existence of countably many bifurcation points giving rise to sign-changing radially
symmetric  solutions was proved by the first author in his dissertation thesis (Satz~2.1.6 of~\cite{DissMandel}). 
 
In the special case $N=2,3$ and  $\lambda_1 = \lambda_2>0$ and $\mu_1, \mu_2 > 0$, Bartsch, Dancer and Wang proved in~\cite{Bartsch2} the existence of countably many mutually disjoint global continua of solutions bifurcating from some diagonal solution family of the form
\begin{align*}
	\{(u_b, u_b, b) : b > -1 \}\subset 
	H^1_\text{rad}(\R^N) \times H^1_\text{rad}(\R^N) \times \R
\end{align*}
with a concentration of bifurcation points as $b \searrow -1$. Here $u_b := (1+b)^{-1/2}u_0$
where $u_0\in H^1_\text{rad}(\R^N)$ is a nondegenerate solution of the nonlinear Schrödinger equation
$-\Delta u + u = u^3$. Moreover, having introduced a suitable labeling of the continua, the authors showed
that the $k$-th continuum consists of solutions where the radial profile of $u-v$ has exactly $k-1$ nodes,
cf. Theorem~2.3 in~\cite{Bartsch2}.

\absatz

In the Helmholtz case, we will analyze the corresponding cases of bifurcations from semitrivial and
diagonal solutions in Theorems~\ref{THEtheorem}~and~\ref{THEtheorem_diagonal}, respectively. In contrast to the
Schrödinger case, we will show that bifurcation occurs at every point in a topology suitable for the system~\eqref{eq_system}. Looking more closely, we find the same structure of discrete bifurcation points
as in the Schrödinger case when fixing a set of asymptotic parameters $\tau_1, \omega$ prescribing the
oscillatory behavior of solutions as $|x| \to \infty$ as in the
conditions~\eqref{eq_asymptotic}~respectively~\eqref{eq_asymptotic_diagonal} below. Whereas, in the
Schrödinger case, the bifurcating solutions are characterized by their nodal structure, 
we characterize them in the Helmholtz case by a condition on the ''asymptotic phase'' of the solution
(disguised as an integral), which at least close to the $j$-th bifurcation point takes the
value $\omega + j \pi$, see Proposition~\ref{prop_pruefer} and the explanations following it.

\absatz

We now present our main results.
Motivated by the decay properties of radially symmetric solutions of nonlinear Helmholtz equations in~\cite{radial}, e.g. Theorem~1.2~(iii), we look for solutions in the Banach space $X_1$ where, for $q \geq 1$,
\begin{align}\label{eq_xq}
	X_q := \left\{ w \in C_\text{rad}(\R^3, \R) \: | \: \norm{w}_{X_q} < \infty \right\}
	\quad\text{where}\quad 
	\norm{w}_{X_q} := \sup_{x \in \R^3} (1 + |x|^2)^\frac{q}{2} |w(x)|.
\end{align}
Working on these spaces, we will be able to derive compactness properties which are crucial when proving our bifurcation results.
Throughout, we discuss classical, radially symmetric solutions $u, v \in X_1 \cap C^2(\R^3)$ of the system~\eqref{eq_system} and related equations. Let us remark here only briefly that, using elliptic regularity, all weak solutions in $u, v \in L^4_\text{rad}(\R^3)$ are actually smooth and, thanks to Proposition~\ref{prop_pruefer} in the next section, belong to $X_1 \cap C^2(\R^3)$. 

In our first result, we study bifurcation of solutions $(u, v, b)$ of the nonlinear Helmholtz system~\eqref{eq_system} 
from a branch of semitrivial solutions of the form 
\begin{align*}
	\mathcal{T}_{u_0} := \{ (u_0, 0, b) \: | \: b \in \R \} \subseteq X_1 \times X_1 \times \R
\end{align*}
in the Banach space $X_1 \times X_1 \times \R$. In contrast to the Schrödinger case, we will demonstrate
that for each of the uncountably many radial solutions $u_0\in X_1\cap C^2(\R^3)$ (see~\cite{radial}) of the
scalar problem
\begin{align}\label{eq_singleHH}\tag{h}
	- \Delta u_0 - \mu u_0 = u_0^3
	\qquad \text{on } \R^3
\end{align}
we have that every point in $\mathcal{T}_{u_0}$ is a bifurcation point for fully nontrivial solutions
of~\eqref{eq_system}. As a consequence of some auxiliary results in Section~\ref{sect_scalar}, we will derive
the following result on the scalar Helmholtz equation at the end of Section~\ref{sect_prfs}.

\begin{prop}\label{prop_u0-sigma0-tau0}
Let $\mu > 0$ and let $u_0 \in X_1\cap C^2(\R^3)$ be any radially symmetric solution  of the nonlinear
Helmholtz equation~\eqref{eq_singleHH}. Then $u_0$ satisfies
\begin{align*}
	u_0(x) = c_0 \: \frac{\sin(|x| \sqrt{\mu} + \sigma_0)}{|x|} + O\left(\frac{1}{|x|^2}\right) 
	\qquad \mathrm{as } \: |x| \to \infty
\end{align*}
for some constants $c_0 \neq 0$ and $\sigma_0 \in [0, \pi)$, and there exists a unique $\tau_0 \in [0, \pi)$ with the property that the problem
\begin{align*}
	\begin{cases}
	- \Delta w - \mu w = 3 u_0^2(x) \: w  &\mathrm{on } \: \R^3,
	\\
	w(x) = \frac{\sin(|x| \sqrt{\mu} + \tau_0)}{|x|} + O\left(\frac{1}{|x|^2}\right) & \mathrm{as } \: |x| \to \infty
	\end{cases}
\end{align*}
admits a nontrivial solution $w_0 \in X_1 \cap C^2(\R^3)$. Moreover, this solution $w_0$ is unique.
\end{prop}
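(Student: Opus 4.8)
My plan is to reduce both~\eqref{eq_singleHH} and the linearized equation to one–dimensional problems and to read off the asymptotics from Proposition~\ref{prop_pruefer}. Writing radial functions as functions of $r=|x|$ and setting $y(r):=r\,u_0(r)$, equation~\eqref{eq_singleHH} becomes $-y''-\mu y = y^3/r^2$ on $(0,\infty)$ with $y(0)=0$; likewise, for a radial $w$ with $z(r):=r\,w(r)$ the equation $-\Delta w-\mu w=3u_0^2w$ turns into $-z''-\mu z = 3u_0(r)^2 z = 3\,y(r)^2 z/r^2$ with $z(0)=0$. Since $u_0\in X_1$ one has $|y(r)|\le\norm{u_0}_{X_1}$, so the inhomogeneity $-y^3/r^2$ and the coefficient $3y^2/r^2$ are $O(r^{-2})$ and hence integrable on $[1,\infty)$; this is exactly the setting of Proposition~\ref{prop_pruefer}. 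Applying it to $y$ yields $y(r)=c_0\sin(\sqrt\mu\,r+\sigma_0)+O(r^{-1})$ for some $c_0\in\R$ and $\sigma_0\in[0,\pi)$ (after possibly flipping the sign of $c_0$ we may take $\sigma_0\in[0,\pi)$), and dividing by $r$ gives the claimed expansion of $u_0$. That $c_0\neq0$ is part of Proposition~\ref{prop_pruefer} — a nontrivial solution of such an ODE has strictly positive asymptotic amplitude — or, equivalently, $c_0=0$ would make $u_0=O(|x|^{-2})$ an $L^2$–eigenfunction at the positive energy $\mu$ of the Schrödinger operator $-\Delta-u_0^2$, whose potential decays faster than $|x|^{-1}$, which by a Rellich–type unique continuation theorem forces $u_0\equiv0$. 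The pair $(c_0,\sigma_0)$ is then uniquely determined.

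For the linearized problem I would first observe that $w\in X_1\cap C^2(\R^3)$ corresponds precisely to a solution $z=rw$ of $-z''-\mu z=3u_0^2z$ with $z(0)=0$: continuity of $w$ at the origin forces $z(0)=0$, while a solution with $z(0)\neq0$ would give $w(x)\sim z(0)/|x|$, which is not continuous at the origin. Conversely, every solution with $z(0)=0$ is bounded on $[1,\infty)$ by a Gronwall estimate based on $\int_1^\infty 3y^2/r^2\,dr<\infty$, so the associated $w$ lies in $X_1$. Hence the admissible $w$ form a one–dimensional subspace; fix a nontrivial generator $w_1$. By Proposition~\ref{prop_pruefer} it satisfies $w_1(x)=c_1\,\frac{\sin(|x|\sqrt\mu+\tau_1)}{|x|}+O(|x|^{-2})$ with $c_1\neq0$, the non-vanishing again following from Proposition~\ref{prop_pruefer} or from the same Rellich–type argument applied to $-\Delta-3u_0^2$.

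To conclude, suppose a nontrivial $w_0=\alpha w_1$ has the prescribed far field $\frac{\sin(|x|\sqrt\mu+\tau_0)}{|x|}+O(|x|^{-2})$. Comparing with $\alpha w_1(x)=\alpha c_1\frac{\sin(|x|\sqrt\mu+\tau_1)}{|x|}+O(|x|^{-2})$, and using that a periodic function of the form $A\sin(\sqrt\mu\,r)+B\cos(\sqrt\mu\,r)$ which is $O(1/r)$ must vanish identically, one obtains $\alpha c_1\sin(\sqrt\mu\,r+\tau_1)\equiv\sin(\sqrt\mu\,r+\tau_0)$, hence $(\alpha c_1)^2=1$ and either $\tau_0\equiv\tau_1$ or $\tau_0\equiv\tau_1+\pi\pmod{2\pi}$. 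Exactly one of the two candidates lies in $[0,\pi)$, which selects a unique $\tau_0\in[0,\pi)$ and, accordingly, a unique $\alpha\in\{1/c_1,-1/c_1\}$; then $w_0:=\alpha w_1$ is the asserted unique solution.

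The step I expect to be the main obstacle is Proposition~\ref{prop_pruefer} itself, i.e.\ making the Prüfer/Levinson asymptotics precise enough to deliver the $O(|x|^{-2})$ remainders together with the non-vanishing of the leading coefficients $c_0$ and $c_1$ (without which the statement would be vacuous). Granting that input, the remaining work — the one-dimensionality of the admissible solution space and the elementary matching of asymptotic phases in the last paragraph — is routine.
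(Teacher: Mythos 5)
Your proposal is correct and follows essentially the same route as the paper: both reduce to the radial ODE and invoke Proposition~\ref{prop_pruefer} with $g=u_0^2$ for the nonlinear equation and $g=3u_0^2$ for the linearization, with the strict positivity $\rho_\lambda(g)>0$ supplying the nonvanishing of $c_0$ and of the leading coefficient of $w_0$. The extra details you supply (one-dimensionality of the admissible solution space via $z(0)=0$ and the phase-matching argument for uniqueness of $\tau_0$) are exactly what the paper's very brief proof leaves implicit.
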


Here and in the following, we fix $\mu, \nu > 0$ and $u_0 \in X_1\cap C^2(\R^3)$ with associated constants $\sigma_0, \tau_0 \in [0, \pi)$ as in Proposition~\ref{prop_u0-sigma0-tau0}. For $\tau_1 \in [0, \pi) \setminus \{ \tau_0 \}$, we observe in particular that
\begin{equation}\label{eq_nondegen-tau0}\tag{N}
	\begin{cases}
	- \Delta w - \mu w = 3 u_0^2(x) \: w  &\mathrm{on } \: \R^3,
	\\
	w(x) = \frac{\sin(|x| \sqrt{\mu} + \tau_1)}{|x|} + O\left(\frac{1}{|x|^2}\right) & \mathrm{as } \: |x| \to \infty
	\end{cases}
	\quad \text{implies} \quad
	w = 0.
\end{equation}
This nondegeneracy property will be used later to prove that the linearization of the system~\eqref{eq_system} close to points $(u_0, 0, b_0) \in \mathcal{T}_{u_0}$ admits at most one-dimensional kernels. 
Our strategy will be to use bifurcation from simple eigenvalues with $b$ acting as a bifurcation parameter. The existence of isolated and algebraically simple eigenvalues will be ensured by assuming radial symmetry and by imposing suitable conditions on the asymptotic behavior of the solutions $u, v$. 
For $\tau_1, \omega \in [0, \pi)$ with $\tau_1 \neq \tau_0$, we define $\mathcal{S}(\omega) \subseteq X_1 \times X_1 \times \R \setminus \mathcal{T}_{u_0}$ as the set of all solutions $(u, v, b) \in X_1 \times X_1 \times \R \setminus \mathcal{T}_{u_0}$ of~\eqref{eq_system} satisfying the asymptotic conditions
\begin{equation}\label{eq_asymptotic}\tag{$A_{\omega}$}
	\begin{split}
	&u(x) - u_0(x) = c_u \: \frac{\sin(|x| \sqrt{\mu} + \tau_1)}{|x|} + O\left(\frac{1}{|x|^2}\right)
	\\
	&v(x) = c_v \: \frac{\sin(|x| \sqrt{\nu} + \omega)}{|x|} + O\left(\frac{1}{|x|^2}\right)
	\end{split}
	\qquad
	\text{as } |x| \to \infty
\end{equation}
for some $c_u, c_v \in \R$. Propositions~\ref{prop_cont-comp}~and~\ref{prop_pruefer} in the following section will show that an asymptotic behavior of such form is natural to assume for solutions of the system~\eqref{eq_system}.
We emphasize that we do not denote the dependence of the set $\mathcal{S}(\omega)$ and of the asymptotic conditions~\eqref{eq_asymptotic} on the choice $\tau_1 \in [0, \pi) \setminus \{ \tau_0 \}$.
With that, we obtain the following

\begin{thm}\label{THEtheorem}
Let $\mu, \nu > 0$, fix any $u_0 \in X_1$ solving the nonlinear Helmholtz equation~\eqref{eq_singleHH} and choose $\tau_1 \in [0, \pi) \setminus \{ \tau_0 \}$ with $\tau_0$ as in Proposition~\ref{prop_u0-sigma0-tau0}. 
Then, for every $\omega\in [0, \pi)$, there exists a strictly increasing sequence $(b_k(\omega))_{k \in \Z}$ such that $(u_0, 0, b_k(\omega)) \in \overline{\mathcal{S}(\omega)}$ where $\mathcal{S}(\omega)$ denotes the set of all solutions $(u, v, b) \in X_1 \times X_1 \times \R \setminus \mathcal{T}_{u_0}$ of~\eqref{eq_system} satisfying~\eqref{eq_asymptotic}.  Moreover,
\begin{itemize}
\item[(i)] the respective connected components $\mathcal{C}_k(\omega)$ of $(u_0, 0, b_k(\omega))$ in $\overline{\mathcal{S}(\omega)}$ are unbounded in $X_1 \times X_1 \times \R$; and
\item[(ii)] each bifurcation point $(u_0, 0, b_k(\omega))$ has a neighborhood where the set $\mathcal{C}_k(\omega)$ is a smooth curve in $X_1 \times X_1 \times \R$ which, except for the bifurcation point, consists of fully nontrivial solutions.
\end{itemize}
\end{thm}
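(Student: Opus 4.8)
The plan is to apply Crandall--Rabinowitz bifurcation from simple eigenvalues, with $b$ playing the role of the bifurcation parameter and the asymptotic conditions \eqref{eq_asymptotic} serving to single out a good functional-analytic setting. First I would reformulate \eqref{eq_system} as a fixed-point equation in $X_1 \times X_1$: using the results of Section~\ref{sect_scalar} (in particular the far-field expansions of Propositions~\ref{prop_cont-comp} and \ref{prop_pruefer}), one inverts $-\Delta - \mu$ and $-\Delta - \nu$ by the radial Helmholtz resolvent that selects precisely the asymptotic phases $\tau_1$ and $\omega$ prescribed in \eqref{eq_asymptotic}; call these operators $R_\mu^{\tau_1}$ and $R_\nu^{\omega}$. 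The key point is that on the radial spaces $X_q$ these resolvents are \emph{compact} into $X_1$ when composed with the (cubic, hence locally Lipschitz and improving decay) nonlinearity, so that the solution set near $\mathcal{T}_{u_0}$ is governed by a compact perturbation of the identity and Rabinowitz global bifurcation becomes available. Writing $u = u_0 + \varphi$, the system becomes $F(\varphi, v, b) = 0$ with $F$ smooth from $X_1 \times X_1 \times \R$ to itself, $F(0,0,b) = 0$ for all $b$.

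Next I would compute the linearization at a semitrivial point $(u_0, 0, b_0)$. Because $v$ enters the $u$-equation only quadratically, the linearized operator decouples: the $\varphi$-component is governed by $-\Delta \varphi - \mu \varphi - 3u_0^2 \varphi = 0$ with asymptotic phase $\tau_1$, which by the nondegeneracy \eqref{eq_nondegen-tau0} (valid precisely because $\tau_1 \neq \tau_0$) forces $\varphi = 0$; the $v$-component is governed by the linear Helmholtz equation $-\Delta w - \nu w = b_0 u_0^2 w$ with asymptotic phase $\omega$. Thus the kernel is nontrivial exactly when $b_0$ is such that this scalar weighted eigenvalue problem has a nonzero solution $w$ with phase $\omega$. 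Here is where the Prüfer-type analysis of Proposition~\ref{prop_pruefer} does the real work: transforming to polar coordinates $(r, \vartheta)$ for the radial ODE $-(r w)'' - \nu (r w) = b_0 u_0(r)^2 (r w)$ and tracking the phase function, the ``asymptotic phase'' depends monotonically and continuously on $b_0$ and sweeps out all of $\R$ as $b_0$ ranges over $\R$, crossing the value $\omega + k\pi$ at a strictly increasing sequence $b_k(\omega)$; at each such value the kernel is exactly one-dimensional, spanned by some $w_k$, and one checks the transversality condition $\partial_b$(linearized operator)$[w_k] \notin \operatorname{range}$ by the same monotonicity of the phase (a strict-crossing/Sturm-oscillation argument). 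Algebraic simplicity of the eigenvalue follows likewise from the one-dimensionality of the generalized kernel, again a consequence of nondegeneracy plus the ODE uniqueness.

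With simplicity and transversality in hand, Crandall--Rabinowitz yields statement (ii): near $(u_0, 0, b_k(\omega))$ the solution set of $F = 0$ is a smooth curve $s \mapsto (u_0 + s\varphi_k(s), s w_k(s), b_k(\omega) + s\beta_k(s))$, and since $w_k$ has full support (it solves a linear Helmholtz equation and cannot vanish on a set of positive measure), the component $v = s w_k(s)$ is nontrivial for small $s \neq 0$; nontriviality of $u$ near the branch is automatic since $u_0 \neq 0$, so the curve consists of fully nontrivial solutions away from the bifurcation point, and it lies in $\overline{\mathcal{S}(\omega)}$ by construction of the resolvents. For statement (i), I would invoke Rabinowitz's global alternative: each connected component $\mathcal{C}_k(\omega)$ in $\overline{\mathcal{S}(\omega)}$ is either unbounded or returns to the trivial branch $\mathcal{T}_{u_0}$ at another bifurcation point $b_j(\omega)$. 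To exclude the latter, I would use a parity/counting argument based on the Prüfer phase: the number of ``interior oscillations'' (encoded by the phase integral from Proposition~\ref{prop_pruefer}, which near $b_k$ takes the value $\omega + k\pi$) is a homotopy invariant along $\mathcal{C}_k(\omega)$ that distinguishes the $b_k$, so a secondary bifurcation point would have to carry the same index $k$ --- but distinct $b_j$ carry distinct indices, a contradiction. Hence each $\mathcal{C}_k(\omega)$ is unbounded in $X_1 \times X_1 \times \R$.

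The main obstacle I anticipate is \emph{not} the abstract bifurcation machinery but establishing the compactness of the Helmholtz resolvents $R_\mu^{\tau_1}, R_\nu^{\omega}$ on the weighted radial spaces $X_q$ --- solutions of Helmholtz equations do not decay fast enough to lie in $H^1$, so the usual Rellich-type compactness is unavailable, and one must exploit radial symmetry together with the $\tfrac{1}{|x|}$ decay and the oscillatory structure (essentially an Arzelà--Ascoli argument after extracting the precise $\tfrac{1}{|x|}\sin(\,\cdot\,)$ leading term) to recover compactness; this is presumably the content of Propositions~\ref{prop_cont-comp} and \ref{prop_pruefer}, which I am assuming. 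A secondary technical point is verifying that the phase function in the Prüfer transformation is genuinely strictly monotone in $b_0$ (not merely nondecreasing), which is what makes the $b_k(\omega)$ strictly increasing and the crossings transversal; this should follow from a Wronskian/variation-of-parameters identity showing $\partial_b(\text{phase}) > 0$ pointwise.
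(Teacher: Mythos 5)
Your overall strategy coincides with the paper's: rewrite \eqref{eq_system} as a fixed-point problem $F(w,v,b)=0$ using the phase-selecting resolvents $\mathcal{R}_\mu^{\tau_1},\mathcal{R}_\nu^{\omega}$, use compactness of these operators on the weighted radial spaces, identify the kernel of the linearization via the nondegeneracy \eqref{eq_nondegen-tau0} and the monotone asymptotic phase, and then apply Crandall--Rabinowitz for (ii) and Rabinowitz' global theorem for (i). However, there are two genuine gaps.

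First, your argument for excluding the second Rabinowitz alternative does not work as stated. You claim the Pr\"ufer phase $\omega_\nu(v^2+bu^2)$ is a ``homotopy invariant along $\mathcal{C}_k(\omega)$'' that distinguishes $b_k$ from $b_j$, so the component cannot return to $\mathcal{T}_{u_0}$. But the phase is only \emph{locally constant} on the part of the continuum where $v\neq 0$: when $v=0$ the asymptotic condition on $v$ in \eqref{eq_asymptotic} is vacuous (take $c_v=0$), so the constraint $\omega_\nu(v^2+bu^2)\in\omega+\pi\Z$ is not enforced and the phase can drift continuously. The continuum may well contain semitrivial points $(u_1,0,b_1)$ with $u_1\neq u_0$ --- such points lie in $\mathcal{S}(\omega)$, not on $\mathcal{T}_{u_0}$ --- through which the phase changes from $\omega+k\pi$ to $\omega+j\pi$. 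The paper explicitly concedes it cannot rule this out (see Remark~\ref{THEremark}(c)) and instead concludes unboundedness differently: if $\mathcal{C}_k(\omega)$ returned to $\mathcal{T}_{u_0}$ at some $b^\ast$ with $\omega_\nu(b^\ast u_0^2)\neq\omega+k\pi$, the phase could not be constant, forcing the existence of a semitrivial $(u_1,0,b_1)$ with $u_1\neq u_0$ in $\mathcal{C}_k(\omega)$; by maximal connectedness the component then contains the entire unbounded family $\mathcal{T}_{u_1}$, so it is unbounded in either case. Your contradiction argument needs to be replaced by this (or an equivalent) reasoning.

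Second, you do not address the cases $\omega=0$ and $\tau_1=0$, which are admitted by the theorem but for which $\mathcal{R}_\nu^{0}$ (resp.\ $\mathcal{R}_\mu^{0}$) is undefined because of the pole of $\cot$. The paper replaces the resolvent by $w\mapsto \Psi_\nu\ast f+(\alpha^{(\nu)}(w)+\sigma\beta^{(\nu)}(w))\tilde\Psi_\nu$ with Hahn--Banach extended asymptotic-coefficient functionals $\alpha^{(\nu)},\beta^{(\nu)}\in X_1'$, and must then redo transversality (via a Wronskian identity), algebraic simplicity, and --- most delicately --- the change of the Leray--Schauder index, which requires choosing the sign $\sigma=\pm1$ according to the sign of $b_k(0)$ and tracking how the perturbed eigenvalue $\lambda_b$ crosses $1$. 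This is a substantial portion of the actual proof and cannot be waved away as a routine modification.
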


The main tools in proving this statement are the Crandall-Rabinowitz Bifurcation
Theorem, which will be used to show the local statement (ii) of Theorem~\ref{THEtheorem}, and Rabinowitz' Global Bifurcation Theorem, which will provide (i). For a reference, see \cite{CrandRab},~Theorem~1.7 and \cite{Rab},~Theorem~1.3.
We add some remarks the proof of which will be given after having proved Theorem~\ref{THEtheorem} in
Section~\ref{sect_prf1}.

\begin{rmk}\label{THEremark}
\begin{itemize}
\item[(a)]
We will also see that fully nontrivial solutions of~\eqref{eq_system} satisfying the asymptotic
condition~\eqref{eq_asymptotic} bifurcate from some point $(u_0, 0, b) \in \mathcal{T}_{u_0}$ if and only if $b =
b_k(\omega)$ for some $k \in \Z$.
\item[(b)]
Furthermore, we will prove that the map 
$\R \to \R, k \pi + \omega \mapsto b_k(\omega)$ where $0 \leq \omega < \pi, k \in \Z$
is strictly increasing and onto with $b_k(\omega) \to \pm \infty$ as $k \to \pm \infty$.  
Thus, in particular, every point $(u_0, 0, b) \in \mathcal{T}_{u_0}$, $b \in \R$, is a bifurcation point for fully nontrivial radial solutions of~\eqref{eq_system}, which is in contrast to the case of Schrödinger systems where bifurcation points are isolated, cf. \cite{DissMandel},~Satz~2.1.6. 
\item[(c)]
Close to the respective bifurcation point $(u_0, 0, b_k(\omega)) \in \mathcal{T}_{u_0}$, each continuum $\mathcal{C}_k(\omega)$ is characterized by a phase parameter $\omega + k \pi$ derived from the asymptotic behavior of $v$. It seems that, in the
Helmholtz case of oscillating solutions, the integer $k$ takes the role of the nodal characterizations in the
Schrödinger case, cf. Satz~2.1.6~in~\cite{DissMandel}.  That phase parameter is constant on connected
subsets of the continuum until it possibly runs into another family of semitrivial solutions $\mathcal{T}_{u_1}$ with $u_1 \neq u_0$;
unfortunately we cannot provide criteria deciding whether or not this happens. For this reason we cannot claim that $C_k(\omega)$ contains an unbounded sequence of fully nontrivial solutions.
\item[(d)]
The condition $\tau_1 \neq \tau_0$ is a nondegeneracy condition which ensures that the simplicity requirements of the above-mentioned bifurcation theorems are satisfied. 
If we additionally impose $\tau_1 \neq \sigma_0$, we infer $u \neq 0$ for any solution $(u, v, b) \in
\mathcal{C}_k(\omega)$. 
Moreover, the proof will show that the values $b_k(\omega)$ do not depend on the choice of $\tau_1$.
\end{itemize}
\end{rmk}

In our second result we provide a counterpart of the global bifurcation result by Bartsch, Dancer and
Wang~\cite{Bartsch2} described earlier. The authors obtained infinitely (but countably) many mutually
disjoint continua of solutions bifurcating from a diagonal solution family of~\eqref{eq_systemSchroed} with
elements $(u_b, u_b, b)$ where $u_b = (1+b)^{-1/2}u_0$ and $u_0$ is a nondegenerate solution of $-\Delta u +
u = u^3$. Using the same functional analytical setup as in Theorem~\ref{THEtheorem}, we find an analogue of
these results for the nonlinear Helmholtz system~\eqref{eq_system}. For $u_0$ as in
Proposition~\ref{prop_u0-sigma0-tau0} and $\tau_1, \omega \in [0, \pi)$, $\tau_1 \neq \tau_0$, we introduce the diagonal solution family
\begin{align*}
	\mathfrak{T}_{u_0} := \left\{
		(u_b, u_b, b) \: \big| \: b > -1
	\right\} \subseteq X_1 \times X_1 \times \R
	\qquad \text{with } u_b := (1+b)^{-1/2} \: u_0
\end{align*}
and denote by $\mathfrak{S}(\omega)$ the set of all solutions $(u, v, b) \in X_1 \times X_1 \times \R \setminus \mathfrak{T}_{u_0}$ of the nonlinear Helmholtz system~\eqref{eq_system} with 
\begin{align}\label{eq_asymptotic_diagonal}\tag{$A_\omega^\text{diag}$}
	\begin{split}
	u(x) + v(x) &= 2 u_b(x) + \tilde{c} \: \frac{\sin(|x| \sqrt{\mu} + \tau_1)}{|x|} 
	+ O\left(\frac{1}{|x|^2}\right)
	\\
	u(x) - v(x) &= c \: \frac{\sin(|x| \sqrt{\mu} + \omega)}{|x|} + O\left(\frac{1}{|x|^2}\right)
	\end{split}
	\qquad
	\text{as } |x| \to \infty
\end{align}
for some $\tilde{c}, c \in \R$. Our existence result for fully nontrivial solutions of~\eqref{eq_system}
bifurcating from~$\mathfrak T$ with asymptotics \eqref{eq_asymptotic_diagonal} reads as follows.

\begin{thm}\label{THEtheorem_diagonal}
Let $\mu> 0$, fix any $u_0 \in X_1$ solving the nonlinear Helmholtz equation~\eqref{eq_singleHH} and choose $\tau_1 \in [0, \pi) \setminus \{ \tau_0 \}$ with $\tau_0$ as in Proposition~\ref{prop_u0-sigma0-tau0}. 
Then, for every $\omega\in [0, \pi)$, there exists a sequence $(\tilde{b}_k(\omega))_{k \in \Z}$ 
such that $(u_{\tilde{b}_k(\omega)}, u_{\tilde{b}_k(\omega)}, \tilde{b}_k(\omega)) \in \overline{\mathfrak{S}(\omega)}$ where
 $\mathfrak{S}(\omega)$ denotes the set of all solutions $(u, v, b) \in X_1 \times X_1 \times \R \setminus
 \mathfrak{T}_{u_0}$ of~\eqref{eq_system} satisfying~\eqref{eq_asymptotic_diagonal}. Moreover,
\begin{itemize}
\item[(i)] the respective connected components $\mathfrak{C}_k(\omega)$ of $(u_{\tilde{b}_k(\omega)}, u_{\tilde{b}_k(\omega)}, \tilde{b}_k(\omega))$ in $\overline{\mathfrak{S}(\omega)}$ are unbounded in $X_1\times X_1 \times \R$; and
\item[(ii)] each bifurcation point $(u_{\tilde{b}_k(\omega)}, u_{\tilde{b}_k(\omega)}, \tilde{b}_k(\omega))$ has a neighborhood where the set
$\mathfrak{C}_k(\omega)$ contains a smooth curve in $X_1 \times X_1 \times \R$ which, except for the bifurcation point, consists of fully nontrivial, non-diagonal solutions.
\end{itemize}
\end{thm}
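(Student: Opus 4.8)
The plan is to mirror the strategy used for Theorem~\ref{THEtheorem}, but to first change variables so that the diagonal family $\mathfrak{T}_{u_0}$ becomes a semitrivial family. The natural substitution is
\[
	s := \frac{u+v}{2}, \qquad d := \frac{u-v}{2},
\]
under which the system~\eqref{eq_system} with $\mu = \nu$ decouples at the diagonal: the point $(u_b, u_b, b)$ becomes $(s,d,b) = (u_b, 0, b)$, and the asymptotic conditions~\eqref{eq_asymptotic_diagonal} translate precisely into conditions of the form~\eqref{eq_asymptotic} for the pair $(s,d)$, with the roles of $u$ and $v$ played by $s$ and $d$. Adding and subtracting the two equations of~\eqref{eq_system} yields a new system for $(s,d)$ which, along $d = 0$, reduces in its first component to the scalar Helmholtz equation $-\Delta s - \mu s = c(b)\, s^3$ (for an explicit constant $c(b)$ coming from the cubic terms), solved by $s = u_b = (1+b)^{-1/2} u_0$ by the scaling invariance of~\eqref{eq_singleHH}; the second component's linearization at $d = 0$ is a scalar linear Helmholtz equation with potential a multiple of $u_b^2$, hence of $u_0^2$.

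First I would write out the transformed system explicitly and verify that the diagonal family corresponds to $\{(u_b,0,b) : b > -1\}$, and that the linearization of the $d$-equation at $(u_b,0,b)$ has the form $-\Delta w - \mu w = \beta(b)\, u_0^2(x)\, w$ for an explicit coefficient $\beta(b)$ which is a strictly monotone function of $b$ on $(-1,\infty)$ covering the whole relevant range. Then, exactly as in Section~\ref{sect_prfs}, the existence of a nontrivial solution with asymptotics $\frac{\sin(|x|\sqrt\mu + \omega)}{|x|} + O(|x|^{-2})$ is governed by a Prüfer-type phase function (Proposition~\ref{prop_pruefer}): the admissible coefficients $\beta$ form a discrete set indexed by $\Z$, and pulling back through $\beta(\cdot)$ produces the sequence $(\tilde b_k(\omega))_{k\in\Z}$. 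The nondegeneracy hypothesis $\tau_1 \neq \tau_0$ ensures — via Proposition~\ref{prop_u0-sigma0-tau0} applied to the $s$-component's linearization, which again has potential $3u_0^2$ after rescaling — that the kernel of the full linearized operator at each $\tilde b_k(\omega)$ is exactly one-dimensional, so that the Crandall--Rabinowitz theorem applies and gives the local smooth curve in~(ii); that the solutions on it are non-diagonal follows because the bifurcating direction has $d \not\equiv 0$. The global statement~(i) follows from Rabinowitz' theorem together with the compactness of the relevant solution operator on $X_1 \times X_1$, and the alternative "unbounded or returns to the trivial branch" is excluded in the usual way by the phase characterization, which forbids crossing to a different $\tilde b_j(\omega)$ along a connected set.

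The main obstacle I anticipate is bookkeeping rather than conceptual: one must check that the change of variables is a linear isomorphism of $X_1 \times X_1$ preserving all the functional-analytic structure (continuity, the compactness of the solution operator, the validity of the asymptotic expansions from Propositions~\ref{prop_cont-comp} and~\ref{prop_pruefer}), and, more delicately, that after the substitution the \emph{same} exceptional phase $\tau_0$ and hence the \emph{same} constant set appear, so that the hypothesis $\tau_1 \neq \tau_0$ — stated for $u_0$ itself — really is the correct nondegeneracy condition here. A small subtlety is that the scaling $u_b = (1+b)^{-1/2}u_0$ only makes sense for $b > -1$, which is why the diagonal family is indexed by $b > -1$ and why the sequence $\tilde b_k(\omega)$ need not be onto $\R$ (unlike the $b_k(\omega)$ of Theorem~\ref{THEtheorem}); one should note explicitly that $\beta(b) \to \infty$ as $b \searrow -1$, which is the mechanism producing the accumulation of bifurcation points analogous to~\cite{Bartsch2}, even though here \emph{every} $b > -1$ of the form $\beta^{-1}$ of an admissible coefficient is a bifurcation point. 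With these points in place, the proof runs in close parallel to that of Theorem~\ref{THEtheorem}, and I would present it largely by reference to Section~\ref{sect_prf1}, spelling out only the transformed system and the computation of $\beta(b)$.
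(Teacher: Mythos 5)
Your proposal follows essentially the same route as the paper: the sum/difference substitution (the paper writes $u = u_b + w_1 - w_2$, $v = u_b + w_1 + w_2$), the observation that the linearization decouples into $I - 3\mathbf{R}_\mu^{\tau_1}$ in the first component (since $3(1+b)u_b^2 = 3u_0^2$, so the same $\tau_0$ indeed appears) and $I - \tfrac{3-b}{1+b}\mathbf{R}_\mu^{\omega}$ in the second, the identification of bifurcation points via $\tfrac{3-b}{1+b} = b_k(\omega)$ restricted to $b > -1$, and then Crandall--Rabinowitz, Rabinowitz, and the asymptotic-phase argument exactly as in the proof of Theorem~\ref{THEtheorem}. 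Your explicit $\beta(b) = \tfrac{3-b}{1+b}$, the accumulation of bifurcation points as $b \searrow -1$, and the nondegeneracy discussion all match the paper's argument, so the plan is correct as it stands.
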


Again, similar statements as in Remark~\ref{THEremark} can be proved. In particular, one can check that every
point on $\mathfrak T_{u_0}$ is a bifurcating point by a suitable choice of $\omega$.

\absatz

Let us give a short outline of this paper. 
In the Section~2, we introduce the concepts and technical results we use in the proof of 
Theorems~\ref{THEtheorem}~and~\ref{THEtheorem_diagonal}, which are presented in the
Section~\ref{sect_prf1}~and Section~\ref{sect_prf2}. In the final section, we provide the proofs of the
auxiliary results of Section~\ref{sect_scalar}. Among those, we also prove
Proposition~\ref{prop_u0-sigma0-tau0} from above.

\section{On the scalar problem. Spectral properties}\label{sect_scalar}

The main challenge in proving Theorem~\ref{THEtheorem} is a thorough analysis of the linearized problem which we provide in this chapter.
Throughout, we fix $\lambda > 0$ and discuss the linear Helmholtz equation
\begin{align}\label{eq_single}
	- \Delta w - \lambda w = f 
	\qquad \text{on } \R^3
\end{align}
for some $f \in X_3$, where $X_3$ is defined in~\eqref{eq_xq}. We will frequently identify radially symmetric functions $x \mapsto w(x)$ with their profiles; in particular, we denote by $w' := \partial_r w, w'' = \partial_r^2 w$ the radial derivatives.
The results we establish in this section will demonstrate how to rewrite the system~\eqref{eq_system} in a way suitable for Bifurcation Theory.  

\subsection{Representation Formulas}

First, we discuss a representation formula for solutions of the linear inhomogeneous Helmholtz equation~\eqref{eq_single}. 
The results resemble a Representation Theorem by Agmon, Theorem~4.3 in~\cite{agmon}, but in our setting, the proof is much easier. 
To this end, we introduce the fundamental solutions 
\begin{equation}\label{eq_fundamental}
	\Psi_\lambda, \tilde{\Psi}_\lambda: \R^3 \to \R, 
	\quad
	\Psi_\lambda(x) := \frac{\cos(\sqrt{\lambda}|x|)}{4 \pi |x|}
	\quad \text{and} \quad
	\tilde{\Psi}_\lambda(x) := \frac{\sin(\sqrt{\lambda} |x|)}{4 \pi |x|}
	\quad (x \neq 0)
\end{equation}
of the equation $- \Delta w - \lambda w = 0$ on $\R^3$. 
Throughout, we will require knowledge of the mapping properties of convolutions with $\Psi_\lambda$ resp. $\tilde{\Psi}_\lambda$. 
Various results of such type have been established by Ev\'{e}quoz and Weth in~\cite{EvequozWeth} and further publications, assuming $f \in L^{p'}(\R^N)$ and $w \in L^{p}(\R^N)$ for suitable $p, p' \in (1, \infty)$. In the spaces $X_3$ resp. $X_1$, which satisfy the continuous embeddings
\begin{align}\label{eq_embed}
	X_1 \hookrightarrow \L{p}
	\text{ for } 3 < p \leq \infty,
	\qquad
	X_3 \hookrightarrow \L{q}
	\text{ for } 1 < q \leq \infty,
\end{align}
we prove the following stronger statements. In particular, we obtain a compactness result, which will be most useful in order to establish spectral results such as Proposition~\ref{prop_spectrum} below.

\begin{prop}\label{prop_cont-comp}
	For arbitrary constants $\alpha, \tilde{\alpha} \in \R$, we formally introduce the convolution operator 
	$\mathcal{R}_\lambda f := \left( \alpha \Psi_\lambda + \tilde{\alpha} \tilde{\Psi}_\lambda \right) \ast f$. 
	Then, 
	\begin{itemize}
	\item[(a)] 
	the linear map $\L{\frac{4}{3}} \to \L{4}, \: \: f \mapsto \mathcal{R}_\lambda f$ is well-defined and continuous;
	\item[(b)]
	the linear map $X_3 \to X_1, \: \: f \mapsto \mathcal{R}_\lambda f$ is well-defined, continuous and compact;
	\item[(c)] for $f \in X_3$, we have $w := \mathcal{R}_\lambda f \in X_1 \cap C^2(\R^3)$ 
	with $- \Delta w - \lambda w = \alpha \cdot f$ on $\R^3$; 	and
	\item[(d)] for $f \in X_3$, the profile of $w := \mathcal{R}_\lambda f$ satisfies the asymptotic identity
	\begin{align*}
		w(r) = \sqrt{\frac{\pi}{2}} \: \hat{f}(\sqrt{\lambda}) \cdot \frac{\alpha \cos(r \sqrt{\lambda}) + \tilde{\alpha} \sin(r \sqrt{\lambda})}{r} 
		+ \frac{\delta_f(r)}{r^2} \cdot (|\alpha| + |\tilde{\alpha}|)
		\quad
		\text{as } r \to \infty
	\end{align*}
	where $| \delta_f(r) | \leq \frac{2}{\sqrt{\lambda}} \cdot \norm{f}_{X_3}$ as well as
	$\hat{f}(\sqrt{\lambda}) = \sqrt{\frac{2}{\pi}} \int\limits_0^\infty f(r) \frac{\sin(r\sqrt{\lambda})}{r\sqrt{\lambda}} \: r^2 \: \mathrm{d}r$.
	Further, $\tilde{\Psi}_\lambda \ast f = 4 \pi \sqrt{\frac{\pi}{2}} \: \hat{f}(\sqrt{\lambda}) \cdot \tilde{\Psi}_\lambda$, and
	the radial derivative satisfies
	\begin{align*}
		w'(r) = \sqrt{\frac{\pi}{2}} \: \hat{f}(\sqrt{\lambda}) \cdot \frac{- \alpha \sqrt{\lambda} \sin(r \sqrt{\lambda}) + \tilde{\alpha} \sqrt{\lambda} \cos(r \sqrt{\lambda})}{r} 
		+ O \left(\frac{1}{r^2}\right)
		\quad
		\text{as } r \to \infty.
	\end{align*}
	\end{itemize}
\end{prop}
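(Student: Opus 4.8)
The plan is to work directly with the spherically symmetric convolution. For a radial $f$, convolution against the radial kernels $\Psi_\lambda,\tilde\Psi_\lambda$ can be computed explicitly using the classical identity for the spherical average of $e^{\i k\cdot x}$, which gives the one-dimensional representation
\begin{align*}
	(\tilde\Psi_\lambda\ast f)(r)
	= \frac{1}{\sqrt\lambda\, r}\int_0^\infty \sin(\sqrt\lambda\,\min(r,s))\cos(\sqrt\lambda\,\max(r,s)-\tfrac{\pi}{2}? )\,\dots
\end{align*}
more precisely, for $-\Delta w-\lambda w=f$ the radial Green's function is $G_\lambda(r,s)=\frac{1}{\sqrt\lambda\, rs}\sin(\sqrt\lambda\,\min(r,s))\cos(\sqrt\lambda\max(r,s))\cdot s^2$ up to constants, and the two fundamental solutions correspond to the two choices of boundary behaviour at $0$ and $\infty$. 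So first I would derive, from \eqref{eq_fundamental}, the closed formula $(\Psi_\lambda\ast f)(r)$ and $(\tilde\Psi_\lambda\ast f)(r)$ as one-dimensional integrals in $s$ with kernels built from $\sin,\cos$ of $\sqrt\lambda(r\pm s)$ divided by $rs$, and note the key structural fact that $\tilde\Psi_\lambda\ast f$ is, up to the constant $4\pi\sqrt{\pi/2}\,\hat f(\sqrt\lambda)$, again a multiple of $\tilde\Psi_\lambda$ itself (since $\sin(\sqrt\lambda|x|)/|x|$ is, after the spherical average, essentially reproduced by the compactly-weighted integral against $f$). This last point is where the quantity $\hat f(\sqrt\lambda)=\sqrt{2/\pi}\int_0^\infty f(r)\frac{\sin(r\sqrt\lambda)}{r\sqrt\lambda}r^2\,\mathrm dr$ enters, and it is finite for $f\in X_3$ by the decay $|f(r)|\lesssim (1+r^2)^{-3/2}\|f\|_{X_3}$.

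For (a), I would use that $\mathcal R_\lambda=(\alpha\Psi_\lambda+\tilde\alpha\tilde\Psi_\lambda)\ast\,\cdot$ is the composition/combination of the standard Helmholtz resolvent pieces; the mapping $L^{4/3}_{\mathrm{rad}}(\R^3)\to L^4_{\mathrm{rad}}(\R^3)$ continuity is exactly the endpoint of the Kenig--Ruiz--Sogge / Guti\'errez--Ev\'equoz--Weth estimates at $N=3$, $p=4$ (note $p'=4/3$, $\frac{2(N+1)}{N-1}=4$), so I would simply cite \cite{EvequozWeth, Gutierrez} in the radial case, or alternatively prove it by hand from the one-dimensional kernel using Schur's test / a fractional-integration bound — the convolution kernel $\sim 1/|x|$ plus an oscillatory part, and $1/|x|$ on $\R^3$ maps $L^{4/3}\to L^4$ by Hardy--Littlewood--Sobolev, while the oscillatory remainder is handled by the known $L^{p'}\to L^p$ bound. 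For (b), the continuity $X_3\to X_1$ follows from the explicit kernel: splitting $\int_0^\infty=\int_0^r+\int_r^\infty$, one bounds $(1+r^2)^{1/2}|w(r)|$ by $\|f\|_{X_3}\cdot\sup_r (1+r^2)^{1/2}\big(\frac1r\int_0^r (1+s^2)^{-3/2}s^2\,\mathrm ds+\int_r^\infty (1+s^2)^{-3/2}s\,\mathrm ds\big)$, and both integrals are $O(1/r)$ for large $r$ and bounded near $0$, giving a uniform bound. Compactness is then obtained via Arzel\`a--Ascoli: the image of the unit ball of $X_3$ is, by the same estimates applied to $w'$ (using the $w'$ formula at the end of (d), derived by differentiating the representation), equi-Lipschitz on compact sets and uniformly $O(1/r)$ at infinity, hence equicontinuous and ``tight'' in the weighted sup norm, so precompact in $X_1$.

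For (c), once the closed representation is in hand, $w\in C^2$ away from $0$ is clear, and regularity at the origin follows because the singular parts of $\Psi_\lambda,\tilde\Psi_\lambda$ cancel in the symmetric kernel (as in the Newtonian potential); the PDE $-\Delta w-\lambda w=\alpha f$ is verified either by distributional computation — $-\Delta\Psi_\lambda-\lambda\Psi_\lambda=\delta_0$ in $\mathcal D'(\R^3)$ while $-\Delta\tilde\Psi_\lambda-\lambda\tilde\Psi_\lambda=0$, whence the factor $\alpha$ and not $\tilde\alpha$ — or by direct differentiation under the integral sign in the one-dimensional formula. Finally (d) is a matter of isolating, in the explicit formula for $w(r)$, the terms coming from $\int_0^\infty$ with kernel $\cos(\sqrt\lambda(r-s))$ and $\cos(\sqrt\lambda(r+s))$: expanding $\cos(\sqrt\lambda(r\mp s))=\cos(\sqrt\lambda r)\cos(\sqrt\lambda s)\pm\sin(\sqrt\lambda r)\sin(\sqrt\lambda s)$ and collecting the $\cos(\sqrt\lambda r)/r$ and $\sin(\sqrt\lambda r)/r$ coefficients produces exactly $\sqrt{\pi/2}\,\hat f(\sqrt\lambda)$ times $(\alpha\cos(r\sqrt\lambda)+\tilde\alpha\sin(r\sqrt\lambda))/r$, with the error being the tail $\int_r^\infty$ plus the difference between $\int_0^r$ and $\int_0^\infty$, each of which is $\frac1{r^2}$ times something bounded by $\frac{2}{\sqrt\lambda}\|f\|_{X_3}$ after estimating $\int_r^\infty s^2(1+s^2)^{-3/2}\,\mathrm ds$ and $\int_r^\infty s\,(1+s^2)^{-3/2}\,\mathrm ds$; the $w'$ asymptotics come from differentiating this expression. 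The main obstacle I expect is \textbf{the compactness in (b)}: one must be careful that convergence in the weighted norm $\|\cdot\|_{X_1}$ genuinely requires control of the tails $r\to\infty$ uniformly over the bounded set, and the clean way to get this is precisely the uniform $O(1/r)$ bound with an $\eps$-uniform remainder coming from the $X_3$ decay of $f$ — so the technical heart is making the kernel estimates quantitative enough that both the equicontinuity on compacta and the uniform smallness of $(1+r^2)^{1/2}|w(r)|$ for large $r$ fall out of the same computation.
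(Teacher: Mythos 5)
Your route is essentially the paper's: an explicit one-dimensional formula for the radial convolution (the paper's Lemma~\ref{lem_convol}, obtained by integrating out the angular variables), tail estimates in the variable $s$ to get (b)-continuity and (d), citing Ev\'equoz--Weth for (a), and reading off (c) from $-\Delta\Psi_\lambda-\lambda\Psi_\lambda=\delta_0$, $-\Delta\tilde\Psi_\lambda-\lambda\tilde\Psi_\lambda=0$. Two points need repair. The minor one: your kernel bound $\frac1r\int_0^r(1+s^2)^{-3/2}s^2\,\mathrm ds$ is $O(\tfrac{\log r}{r})$, not $O(\tfrac1r)$; the correct kernel carries the factor $\frac{\sin(\sqrt\lambda s)}{\sqrt\lambda s}$, i.e.\ one power of $s$ less, and then the estimate closes (this same formula also gives the $L^\infty$ bound near $r=0$, where the paper separately invokes Young's inequality).

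The substantive gap is compactness in (b). "Equicontinuous on compacta and uniformly $O(1/r)$ at infinity" does \emph{not} imply precompactness in $X_1$: the weight $(1+|x|^2)^{1/2}$ exactly cancels a $1/r$ decay, so a uniform $O(1/r)$ bound only gives boundedness of the tails, not smallness. Concretely, $w_n(r)=\chi(r/n)/r$ with $\chi$ a fixed bump converges to $0$ locally uniformly, is uniformly $O(1/r)$, yet $\norm{w_n}_{X_1}\not\to0$. You sense this ("the main obstacle"), but the fix is not an "$\eps$-uniform remainder" in the $O(1/r)$ bound; it is the finer structure already contained in (d): $\mathcal R_\lambda f$ splits into a \emph{rank-one} part, a fixed multiple of $\frac{\alpha\cos(r\sqrt\lambda)+\tilde\alpha\sin(r\sqrt\lambda)}{r}$ with scalar coefficient $\sqrt{\pi/2}\,\hat f(\sqrt\lambda)$, plus a remainder that is uniformly $O(\norm{f}_{X_3}/r^2)$ and hence genuinely tight after multiplication by the weight. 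For a bounded sequence $(f_n)$ you must therefore (i) extract a subsequence along which the scalars $\hat f_{n_k}(\sqrt\lambda)$ converge — the paper does this by passing to a weakly convergent subsequence in $\L{\frac43}$, since $f\mapsto\hat f(\sqrt\lambda)$ is integration against a fixed $L^4$ function — and (ii) get local uniform convergence of $u_{n_k}=\mathcal R_\lambda f_{n_k}$, for which the paper uses elliptic regularity and Rellich--Kondrachov rather than Arzel\`a--Ascoli (your variant via bounds on $w'$ would also work, but note that boundedness of $(f_n)$ in $X_3$ alone gives no pointwise-convergent subsequence of the $f_n$ themselves, so step (i) cannot be skipped). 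With (i) and (ii) in place, the Cauchy property in $X_1$ follows by splitting $|x|\le R$ and $|x|>R$ exactly as you intend.
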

This was motivated by yet unpublished results provided by Ev\'{e}quoz, which in case $N = 3$ yield a constant $C(\lambda) > 0$ with
\begin{align*}
	\norm{\min\{|\,\cdot\,|,|\,\cdot\,|^{\frac{3}{2}}\} 
	\cdot \left| (\Psi_\lambda + \i \tilde{\Psi}_\lambda) \ast f \right| \: }_{L^\infty(\R^3)} \leq 
	C(\lambda) \cdot \norm{f}_{L^\frac{4}{3}(\R^3)}
	\quad \text{for all } f \in \mathcal{S}_\text{rad}(\R^3).
\end{align*}
The fact that we choose the stronger topology of $X_3$ instead of $\L{\frac{4}{3}}$ will imply that the convolution even maps to $L^\infty_\text{rad}(\R^3)$ without additional weight at the origin and provide further compactness properties. 
Moreover, the proof will show that the decay rate prescribed by the $X_3$ space is not the optimal one yielding continuity and compactness as in (b). In fact, the same proof yields continuity even if $X_3$ is replaced by $X_{2 + \varepsilon}$ ($\varepsilon > 0$) and compactness assuming $X_{9/4 + \varepsilon}$ ($\varepsilon > 0$).

From Proposition~\ref{prop_cont-comp}, we now derive the representation formulae we require later to construct  the functional analytic setting in the proof of Theorem~\ref{THEtheorem}.
For $\omega \in (0, \pi)$, we define the linear convolution operators
\begin{equation}\label{eq_R-lambda-omega}
		\mathcal{R}_\lambda^\omega: X_3 \to X_1, \quad
		f \mapsto \Psi_\lambda \ast f + \cot(\omega) \: \tilde{\Psi}_\lambda \ast f
\end{equation}
which provide solutions of the Helmholtz equation~\eqref{eq_single} the asymptotic behavior of which is described by the phase parameter $\omega$ as follows. 
\begin{cor}\label{cor_asymptotic}
	Let $\omega \in (0, \pi)$ and $f \in X_3$.
	Then, for $w \in X_1$, we have $w = \mathcal{R}_\lambda^\omega f$
	if and only if $w$ is a $C^2$ solution of $- \Delta w - \lambda w = f$ on $\R^3$
	with asymptotic behavior 
	\begin{align*}
		w(x) = \gamma \cdot \frac{\sin(|x| \sqrt{\lambda} + \omega)}{|x|} 
		+ O\left(\frac{1}{|x|^2}\right)
		\qquad
		\text{as } |x| \to \infty
	\end{align*}
	for some $\gamma \in \R$.
\end{cor}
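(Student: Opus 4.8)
The plan is to derive Corollary~\ref{cor_asymptotic} directly from Proposition~\ref{prop_cont-comp}, parts (c) and (d), together with a uniqueness argument for the homogeneous equation. First I would prove the ``only if'' direction. Given $f \in X_3$ and $\omega \in (0,\pi)$, set $w := \mathcal{R}_\lambda^\omega f = \Psi_\lambda \ast f + \cot(\omega)\,\tilde\Psi_\lambda \ast f$, which is the operator $\mathcal{R}_\lambda$ of Proposition~\ref{prop_cont-comp} with $\alpha = 1$, $\tilde\alpha = \cot(\omega)$. By part (c), $w \in X_1 \cap C^2(\R^3)$ and $-\Delta w - \lambda w = f$ on $\R^3$. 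By part (d), with $\beta := \sqrt{\tfrac{\pi}{2}}\,\hat f(\sqrt\lambda)$,
\begin{align*}
	w(r) = \beta \cdot \frac{\cos(r\sqrt\lambda) + \cot(\omega)\sin(r\sqrt\lambda)}{r} + O\!\left(\frac{1}{r^2}\right)
	\qquad \text{as } r \to \infty.
\end{align*}
Using $\cos\theta + \cot(\omega)\sin\theta = \tfrac{1}{\sin\omega}\bigl(\sin\omega\cos\theta + \cos\omega\sin\theta\bigr) = \tfrac{1}{\sin\omega}\sin(\theta + \omega)$, the leading term equals $\tfrac{\beta}{\sin\omega}\cdot\tfrac{\sin(r\sqrt\lambda+\omega)}{r}$, so $w$ has the asserted asymptotics with $\gamma = \beta/\sin\omega \in \R$ (note $\sin\omega \neq 0$ since $\omega \in (0,\pi)$).

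For the ``if'' direction, suppose $w \in X_1 \cap C^2(\R^3)$ solves $-\Delta w - \lambda w = f$ with $w(x) = \gamma\,\frac{\sin(|x|\sqrt\lambda+\omega)}{|x|} + O(|x|^{-2})$. Let $\tilde w := \mathcal{R}_\lambda^\omega f$; by what was just shown, $\tilde w$ solves the same equation and has asymptotics with some coefficient $\tilde\gamma$. The difference $h := w - \tilde w \in X_1 \cap C^2(\R^3)$ then solves the homogeneous equation $-\Delta h - \lambda h = 0$ on $\R^3$ and, being radial, its profile solves the ODE $h'' + \tfrac{2}{r}h' + \lambda h = 0$, whose solutions are spanned by $\tfrac{\sin(r\sqrt\lambda)}{r}$ and $\tfrac{\cos(r\sqrt\lambda)}{r}$. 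Regularity of $h$ at the origin (as $h \in C^2(\R^3)$, so $rh(r)$ vanishes at $r=0$) forces the coefficient of $\tfrac{\cos(r\sqrt\lambda)}{r}$ to be zero, hence $h(r) = d\,\tfrac{\sin(r\sqrt\lambda)}{r}$ for some $d \in \R$. On the other hand, $h = w - \tilde w$ has, by subtraction of the two asymptotic expansions, the form $h(r) = (\gamma - \tilde\gamma)\tfrac{\sin(r\sqrt\lambda+\omega)}{r} + O(r^{-2})$. Comparing the two expressions for $h$: $d\sin(r\sqrt\lambda) = (\gamma-\tilde\gamma)\sin(r\sqrt\lambda+\omega) + O(r^{-1})$ for all large $r$. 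Since $\omega \in (0,\pi)$, the functions $\sin(r\sqrt\lambda)$ and $\sin(r\sqrt\lambda+\omega)$ are linearly independent on any half-line, so this forces $d = 0$ and $\gamma = \tilde\gamma$; in particular $h \equiv 0$, i.e. $w = \tilde w = \mathcal{R}_\lambda^\omega f$.

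The main obstacle, such as it is, is bookkeeping: matching the two trigonometric expressions and justifying that regularity at the origin kills the $\cos(r\sqrt\lambda)/r$ mode. The latter point is where one must use $C^2$-regularity genuinely — one argues that any radial solution of $-\Delta h = \lambda h$ near the origin that is not a multiple of $\sin(r\sqrt\lambda)/r$ is unbounded like $1/r$, contradicting $h \in X_1 \subseteq C_{\mathrm{rad}}(\R^3,\R)$. (Alternatively, one can invoke that $\tilde\Psi_\lambda \ast f$ is the only convolution giving a bounded solution, as recorded in Proposition~\ref{prop_cont-comp}(d).) Everything else is a direct substitution into the formulas already proved in Proposition~\ref{prop_cont-comp}, so the corollary is essentially a restatement of part (d) after the elementary trigonometric identity, plus the one-line uniqueness argument for the homogeneous equation.
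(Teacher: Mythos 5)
Your proposal is correct and follows essentially the same route as the paper: the forward direction via Proposition~\ref{prop_cont-comp}~(c),(d) and the identity $\cos\theta+\cot(\omega)\sin\theta=\sin(\theta+\omega)/\sin(\omega)$, and the converse by observing that $w-\mathcal{R}_\lambda^\omega f$ is a bounded radial solution of the homogeneous equation, hence a multiple of $\tilde\Psi_\lambda$, which must vanish by comparing asymptotics (the paper phrases this as $\tilde\alpha\cdot\tilde\Psi_\lambda$ having to match a multiple of $\sin(|x|\sqrt\lambda+\omega)/|x|$, exactly your linear-independence argument). Your extra detail on why regularity at the origin excludes the $\cos(r\sqrt\lambda)/r$ mode is a point the paper leaves implicit, but the argument is the same.
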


We observe that the operator $\mathcal{R}_\lambda^\omega$ is not well-defined for $\omega = 0$ due to the pole of the cotangent. Thus we have to extend some of the previous results in a suitable framework. First, by the Hahn-Banach Theorem, we construct continuous linear functionals $\alpha, \beta \in X_1'$ as follows. On the linear subspace 
\begin{align*}
	U_1(\lambda) := \bigg\{
		w \in X_1 \: \bigg| \:
		&w(x) = \alpha_w \frac{\sin(|x|\sqrt{\lambda})}{4 \pi|x|} 
		+ \beta_w \frac{\cos(|x|\sqrt{\lambda})}{4 \pi|x|} + 
		O\left(\frac{1}{|x|^2}\right) 
		\\
		&\text{as } |x| \to \infty \text{ for some } \alpha_w, \beta_w \in \R
	\bigg\},
\end{align*}
we let, for $w \in U_1(\lambda)$ 
with $w(r) = \alpha_w \frac{\sin(r\sqrt{\lambda})}{4 \pi r} 	+ \beta_w \frac{\cos(r \sqrt{\lambda})}{4 \pi r} + 
O\left(\frac{1}{r^2}\right)$ as $r = |x| \to \infty$,
\begin{equation}\label{eq_alpha-beta}
\begin{split}
	\alpha^{(\lambda)}(w) &:= \alpha_w = \lim_{n \to \infty} \left[
	4 \pi \cdot \frac{2\pi n + \frac{\pi}{2}}{\sqrt{\lambda}} \cdot w\left( \frac{2\pi n + \frac{\pi}{2}}{\sqrt{\lambda}} \right)
	\right], 
	\\ 
	\beta^{(\lambda)}(w) &:= \beta_w = \lim_{n \to \infty} \left[
	4 \pi \cdot \frac{2 \pi n}{\sqrt{\lambda}} \cdot w\left( \frac{2\pi n}{\sqrt{\lambda}} \right)
	\right]. 
\end{split}
\end{equation}
But then $|\alpha^{(\lambda)}(w)|, |\beta^{(\lambda)}(w)| \leq \limsup_{r \to \infty} |4\pi \sqrt{1 + r^2} \cdot w(r)| \leq 4 \pi \norm{w}_{X_1}$ for $w \in U_1(\sqrt{\lambda})$; hence, after continuous extension, $\alpha^{(\lambda)}, \beta^{(\lambda)} \in X_1'$. In particular, for any $f \in X_3$ and $\lambda > 0$, Proposition~\ref{prop_cont-comp}~(d) implies $\Psi_\lambda \ast f, \tilde{\Psi}_\lambda \ast f \in U_1(\lambda)$ with
\begin{equation}\label{eq_alpha-beta-coeff}
\begin{split}
	&\alpha^{(\lambda)}(\Psi_\lambda \ast f) = \beta^{(\lambda)}(\tilde{\Psi}_\lambda \ast f)  = 0, 
	\\
	&\alpha^{(\lambda)}(\tilde{\Psi}_\lambda \ast f) 
	= \beta^{(\lambda)}(\Psi_\lambda \ast f) = 4\pi \sqrt{\frac{\pi}{2}} \cdot \hat{f}(\sqrt{\lambda}).
\end{split}
\end{equation}

We find characterizations as in Corollary~\ref{cor_asymptotic} both without any asymptotic condition and in the case $\omega = 0$:
\begin{cor}\label{cor_alpha-beta}
	Let $f \in X_3$ and $w \in X_1$, $\omega \in [0, \pi)$, and consider continuous linear functionals
	$\alpha^{(\lambda)}, \beta^{(\lambda)} \in X_1'$ satisfying~\eqref{eq_alpha-beta}.
	Then the following characterizations hold:
	\begin{itemize}
	\item[(a)]
	$w$ is twice continuously differentiable and solves $- \Delta w - \lambda w = f$ on $\R^3$ 
	if and only if $w = \Psi_\lambda \ast f + \alpha^{(\lambda)}(w) \cdot \tilde{\Psi}_\lambda$.
	\item[(b)]
	Let $\sigma \in \{ -1, +1 \}$.
	$w$ is twice continuously differentiable, solves $- \Delta w - \lambda w = f$ on $\R^3$ and satisfies
	\begin{align*}
		w(x) = \gamma \: \frac{\sin(|x|\sqrt{\lambda})}{|x|} + O\left(\frac{1}{|x|^2}\right)
		\qquad \text{as } |x| \to \infty
	\end{align*}
	for some $\gamma \in \R$
	if and only if $w = \Psi_\lambda \ast f + (\alpha^{(\lambda)}(w) + \sigma \beta^{(\lambda)}(w)) \cdot \tilde{\Psi}_\lambda$. 
	In this case, $\beta^{(\lambda)}(w) = 0$.
	\end{itemize}
\end{cor}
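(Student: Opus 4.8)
The plan is to deduce both characterizations from Corollary~\ref{cor_asymptotic} together with Proposition~\ref{prop_cont-comp}~(c)--(d) and the definition~\eqref{eq_alpha-beta} of the functionals. For part~(a): if $w$ is a $C^2$ solution of $-\Delta w - \lambda w = f$, then $v := w - \Psi_\lambda \ast f$ solves $-\Delta v - \lambda v = 0$ on $\R^3$ by Proposition~\ref{prop_cont-comp}~(c) (with $\alpha = 1, \tilde\alpha = 0$), and $v \in X_1$ since $w \in X_1$ and $\Psi_\lambda \ast f \in X_1 \subseteq U_1(\lambda)$ by Proposition~\ref{prop_cont-comp}~(b),(d). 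The only radial $X_1$-solutions of the homogeneous Helmholtz equation are the multiples of $\tilde\Psi_\lambda$ (the function $\Psi_\lambda$ is singular at the origin, hence not in $X_1$, so $\cos(\sqrt\lambda r)/r$ is excluded, while $\sin(\sqrt\lambda r)/r$ extends smoothly through $0$); therefore $v = \gamma \tilde\Psi_\lambda$ for some $\gamma \in \R$. Applying $\alpha^{(\lambda)}$ and using~\eqref{eq_alpha-beta-coeff}, which gives $\alpha^{(\lambda)}(\Psi_\lambda \ast f) = 0$ and $\alpha^{(\lambda)}(\tilde\Psi_\lambda) = 4\pi \cdot \frac{1}{4\pi} = 1$, one reads off $\gamma = \alpha^{(\lambda)}(w)$, which is the claimed identity. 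The converse is immediate: if $w = \Psi_\lambda \ast f + \alpha^{(\lambda)}(w)\tilde\Psi_\lambda$, then Proposition~\ref{prop_cont-comp}~(c) shows $w \in X_1 \cap C^2(\R^3)$ solves the equation.

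For part~(b), assume $w$ is such a $C^2$ solution with the stated asymptotics. By part~(a) we already know $w = \Psi_\lambda \ast f + \alpha^{(\lambda)}(w)\tilde\Psi_\lambda$; it remains to identify the coefficient in the form $\alpha^{(\lambda)}(w) + \sigma\beta^{(\lambda)}(w)$, i.e.\ to show $\beta^{(\lambda)}(w) = 0$. From~\eqref{eq_alpha-beta-coeff} and Proposition~\ref{prop_cont-comp}~(d), the function $\Psi_\lambda \ast f$ has the far-field expansion $\beta^{(\lambda)}(\Psi_\lambda \ast f)\frac{\cos(\sqrt\lambda r)}{4\pi r} + O(r^{-2})$ with no $\sin$-term, while $\tilde\Psi_\lambda$ contributes only a $\sin$-term. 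Hence the $\cos$-coefficient of $w$ equals $\beta^{(\lambda)}(\Psi_\lambda \ast f) = \beta^{(\lambda)}(w)$ (by linearity of $\beta^{(\lambda)}$ and $\beta^{(\lambda)}(\tilde\Psi_\lambda) = 0$). But the hypothesis says $w(x) = \gamma\,\frac{\sin(\sqrt\lambda |x|)}{|x|} + O(|x|^{-2})$, i.e.\ the $\cos$-coefficient of $w$ vanishes; therefore $\beta^{(\lambda)}(w) = 0$. Consequently $\alpha^{(\lambda)}(w) = \alpha^{(\lambda)}(w) + \sigma\beta^{(\lambda)}(w)$ for either sign $\sigma$, and the representation of part~(a) becomes exactly the asserted one. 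For the converse, if $w = \Psi_\lambda \ast f + (\alpha^{(\lambda)}(w) + \sigma\beta^{(\lambda)}(w))\tilde\Psi_\lambda$, then Proposition~\ref{prop_cont-comp}~(c) gives the $C^2$ solution property, and its asymptotics from Proposition~\ref{prop_cont-comp}~(d) are $\beta^{(\lambda)}(\Psi_\lambda \ast f)\frac{\cos(\sqrt\lambda r)}{4\pi r} + (\text{const})\frac{\sin(\sqrt\lambda r)}{r} + O(r^{-2})$; applying $\beta^{(\lambda)}$ to the whole identity and using $\beta^{(\lambda)}(\tilde\Psi_\lambda) = 0$ forces $\beta^{(\lambda)}(w) = \beta^{(\lambda)}(\Psi_\lambda \ast f)$, so that substituting back shows $\beta^{(\lambda)}(\Psi_\lambda \ast f) = (1+\sigma)^{-1}\cdot(\ldots)$ — more cleanly, one checks directly that the $\cos$-coefficient of this $w$ is $\beta^{(\lambda)}(w)(1 - 1) $... so it is cleanest to simply observe that plugging the representation into $\beta^{(\lambda)}$ yields a fixed-point relation whose only solution is $\beta^{(\lambda)}(w) = 0$, giving the required $\sin$-asymptotics.

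The only genuinely delicate point is the Liouville-type uniqueness statement used in part~(a): that the radial distributional solutions of $-\Delta v - \lambda v = 0$ on all of $\R^3$ lying in $X_1$ are exactly the multiples of $\tilde\Psi_\lambda$. This is where global regularity at the origin enters — the general radial solution of the ODE $v'' + \frac{2}{r}v' + \lambda v = 0$ on $(0,\infty)$ is a linear combination of $\frac{\sin(\sqrt\lambda r)}{r}$ and $\frac{\cos(\sqrt\lambda r)}{r}$, and membership in $X_1 \subseteq C_{\mathrm{rad}}(\R^3)$ (continuity at $x = 0$, hence boundedness near the origin) kills the $\cos$-branch. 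I would phrase this via elliptic regularity (a solution in $X_1 \hookrightarrow L^p$ is smooth by the mapping properties already recorded) plus the explicit ODE analysis, and I expect it to be the main obstacle only in the bookkeeping sense; conceptually it is standard. Everything else is linear algebra with the functionals $\alpha^{(\lambda)}, \beta^{(\lambda)}$ and direct quotation of Proposition~\ref{prop_cont-comp} and Corollary~\ref{cor_asymptotic}.
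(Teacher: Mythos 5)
Your part (a) and the forward direction of part (b) follow the paper's proof essentially verbatim: write $w - \Psi_\lambda \ast f$ as a radial $X_1$-solution of the homogeneous Helmholtz equation, hence a multiple of $\tilde{\Psi}_\lambda$ (the $\cos$-branch being singular at the origin), and read off the coefficient via $\alpha^{(\lambda)}$ using $\alpha^{(\lambda)}(\Psi_\lambda \ast f)=0$, $\alpha^{(\lambda)}(\tilde{\Psi}_\lambda)=1$. Likewise, $\beta^{(\lambda)}(w)=0$ in the forward direction of (b) is exactly what the paper extracts from the defining formulas~\eqref{eq_alpha-beta} once $w \in U_1(\lambda)$; your detour through $\beta^{(\lambda)}(\Psi_\lambda\ast f)$ is correct but unnecessary.

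The one step that fails as written is the converse of part (b). Applying $\beta^{(\lambda)}$ to the identity $w = \Psi_\lambda \ast f + (\alpha^{(\lambda)}(w)+\sigma\beta^{(\lambda)}(w))\cdot\tilde{\Psi}_\lambda$ only yields $\beta^{(\lambda)}(w) = \beta^{(\lambda)}(\Psi_\lambda\ast f) = 4\pi\sqrt{\pi/2}\,\hat f(\sqrt{\lambda})$, since $\beta^{(\lambda)}(\tilde{\Psi}_\lambda)=0$; this is a consistent identity, not a fixed-point relation forcing $\beta^{(\lambda)}(w)=0$, and your text visibly stalls here. The correct move --- and the one the paper makes --- is to apply $\alpha^{(\lambda)}$ instead: by~\eqref{eq_alpha-beta-coeff} one obtains $\alpha^{(\lambda)}(w) = \alpha^{(\lambda)}(w) + \sigma\beta^{(\lambda)}(w)$, hence $\beta^{(\lambda)}(w)=0$ because $\sigma \neq 0$. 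Combining this with the $\beta^{(\lambda)}$-identity above then gives $\hat f(\sqrt{\lambda})=0$, so the leading $\cos$-term of $\Psi_\lambda \ast f$ in Proposition~\ref{prop_cont-comp}~(d) vanishes and the asserted pure-$\sin$ asymptotics follow. With that single substitution your argument closes and coincides with the paper's.
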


\subsection{The Asymptotic Phase}

Frequently, equations of interest will take the form~\eqref{eq_single} with $f = g \cdot w$ for some $g \in X_2$, see~\eqref{eq_xq}. 
We can then use ODE methods, more specifically the Prüfer transformation, to discuss the corresponding initial value problem for the profiles,
\begin{align}\label{eq_single-pruefer}
	- w'' - \frac{2}{r} w' - \lambda w = g(r) \: w
	\qquad \text{on } (0, \infty)
	\qquad \text{with } w(0) = 1, \: w'(0) = 0.
\end{align}

\begin{prop}\label{prop_pruefer}
	Assume $g \in X_2$. Then the ODE initial value problem~\eqref{eq_single-pruefer}
	has a unique (global) solution $w: [0, \infty) \to \R$ which asymptotically satisfies
	\begin{align*}
		w(r) &= \rho_\lambda(g) \: \frac{\sin(r \sqrt{\lambda} + \omega_\lambda(g))}{r} + O\left(\frac{1}{r^2}\right), 
		\\
		w'(r) &= \rho_\lambda(g) \sqrt{\lambda} \: \frac{\cos(r \sqrt{\lambda}  + \omega_\lambda(g))}{r} + O\left(\frac{1}{r^2}\right)
	\end{align*}
	as $r \to \infty$ for some $\rho_\lambda(g) > 0$ and $\omega_\lambda(g) \in \R$.
	Here, the value of $\omega_\lambda(g)$ is given by 
	\begin{equation}\label{eq_aspt-phase}
	\begin{split}
		&\omega_\lambda(g) = \frac{1}{\sqrt{\lambda}} \int_0^\infty g(r) \sin^2(\phi(r)\sqrt{\lambda}) \, \mathrm{d}r
		\\
		&\text{where } \phi: [0, \infty) \to \R \text{ solves }
		\begin{cases}
		\phi' = 1 + \frac{1}{\lambda} g(r) \sin^2(\phi \sqrt{\lambda}),
		\\
		\phi(0) = 0.
		\end{cases}
	\end{split}
	\end{equation}
\end{prop}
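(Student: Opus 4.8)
The plan is to treat the problem via the Prüfer transformation, which is the standard ODE tool for analyzing oscillatory second-order equations. First I would reduce the three-dimensional radial equation to a one-dimensional form: setting $y(r) := r\,w(r)$ turns \eqref{eq_single-pruefer} into $-y'' - \lambda y = g(r)\,y$ on $(0,\infty)$ with initial data $y(0) = 0$, $y'(0) = 1$. This is a regular Sturm--Liouville-type equation on $[0,\infty)$, and since $g \in X_2$ means $|g(r)| \lesssim (1+r^2)^{-1}$, the perturbation $g(r)\,y$ is integrable against the relevant quantities at infinity. Local existence and uniqueness of $w$ near $r=0$ follows from standard ODE theory applied to the singular equation (or to the $y$-equation with the zero initial condition), and global existence follows because the equation is linear with locally bounded coefficients.

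Next I would introduce Prüfer variables for the $y$-equation: write $y(r) = \frac{R(r)}{\sqrt\lambda}\sin(\theta(r))$ and $y'(r) = R(r)\cos(\theta(r))$, or more conveniently $y(r) = R(r)\sin(\sqrt\lambda\,\phi(r))$, $y'(r) = \sqrt\lambda\,R(r)\cos(\sqrt\lambda\,\phi(r))$ so that in the unperturbed case $\phi(r) = r$. Differentiating and using the ODE gives the decoupled-in-$\phi$ system
\begin{align*}
	\phi' &= 1 + \frac{1}{\lambda}\,g(r)\sin^2(\sqrt\lambda\,\phi),
	\\
	(\log R)' &= -\frac{1}{\sqrt\lambda}\,g(r)\sin(\sqrt\lambda\,\phi)\cos(\sqrt\lambda\,\phi).
\end{align*}
The initial conditions $y(0)=0$, $y'(0)=1$ translate to $\phi(0) = 0$ and $R(0) = 1/\sqrt\lambda$ (after matching constants; the precise normalization is bookkeeping). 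Because $g \in L^1(0,\infty)$ — which follows from $g \in X_2$ — the right-hand side of the $\phi$-equation is $1$ plus an $L^1$ perturbation, so $\phi(r) - r$ converges to a finite limit as $r\to\infty$; call $\sqrt\lambda$ times that limit the phase shift, and one reads off $\omega_\lambda(g) = \int_0^\infty \frac{1}{\sqrt\lambda} g(r)\sin^2(\sqrt\lambda\,\phi(r))\,dr$ directly by integrating the $\phi'$-equation. Similarly $\log R(r)$ converges because its derivative is in $L^1$, giving a positive limit $\rho_\lambda(g) > 0$ (positivity is immediate since $R > 0$ everywhere and the limit of a convergent exponential is positive).

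The remaining task is to upgrade these statements to the quantitative asymptotics with $O(1/r^2)$ remainders for $w$ and $w'$. Here I would use that $w(r) = y(r)/r$ and $w'(r) = y'(r)/r - y(r)/r^2$, so it suffices to show $y(r) = \rho_\lambda(g)\sin(\sqrt\lambda r + \omega_\lambda(g)) + O(1/r)$ and $y'(r) = \sqrt\lambda\,\rho_\lambda(g)\cos(\sqrt\lambda r + \omega_\lambda(g)) + O(1/r)$. From the Prüfer system, $R(r) = \rho_\lambda(g) + O\big(\int_r^\infty |g|\big)$ and $\sqrt\lambda\,\phi(r) = \sqrt\lambda r + \omega_\lambda(g) + O\big(\int_r^\infty |g|\big)$; since $g \in X_2$ gives $|g(r)| \lesssim r^{-2}$, the tail $\int_r^\infty |g| = O(1/r)$, and substituting into $y = R\sin(\sqrt\lambda\phi)$ with a first-order Taylor expansion of $\sin$ and $\cos$ yields the claimed $O(1/r)$ error for $y$ and $y'$, hence $O(1/r^2)$ for $w$ and $w'$.

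The main obstacle I anticipate is the bookkeeping at $r \to 0$: the original equation \eqref{eq_single-pruefer} is singular there, so one must either justify passing to the $y = rw$ substitution carefully (checking that a $C^2$ solution $w$ of the radial equation with $w(0)=1$, $w'(0)=0$ corresponds exactly to the $y$-IVP with $y(0)=0$, $y'(0)=1$, and conversely that the $y$-solution produces a function $w = y/r$ that is genuinely $C^2$ at the origin with the right data) or work directly with a Prüfer-type transformation of the singular equation and control the behavior near $0$. The estimates at infinity are comparatively routine given the $X_2$ decay of $g$ — the key input is simply that $X_2 \hookrightarrow L^1(0,\infty)$ with the appropriate tail bound — but care is needed to make the remainder uniform and to verify that $\rho_\lambda(g)$ is strictly positive rather than merely nonnegative, which as noted follows from the multiplicative (exponential) structure of the $R$-equation.
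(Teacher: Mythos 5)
Your proposal follows essentially the same route as the paper's proof: the substitution $y = r\,w$, the Prüfer parametrization $y = \varrho\sin(\sqrt{\lambda}\,\phi)$, $y' = \sqrt{\lambda}\,\varrho\cos(\sqrt{\lambda}\,\phi)$, convergence of phase and amplitude from $g \in L^1(0,\infty)$, and the tail bound $\int_r^\infty |g| = O(1/r)$ to upgrade to the quantitative $O(1/r^2)$ remainders for $w$ and $w'$. The approach and all key steps match; no gaps.
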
 

We will refer to the term $\omega_\lambda(g)$ as the asymptotic phase of the solution $w$ of~\eqref{eq_single-pruefer}; we suggest to think of it as a way of quantifying the effect of the right-hand side of equation~\eqref{eq_single-pruefer} on the solution $w$ in a situation where solutions typically oscillate. More precisely, writing $\omega_\lambda(g) = \omega + k \pi$ for some $k \in \Z$ and $\omega \in [0, \pi)$, the parameter $\omega$ describes the shift of phase between the profile $r \cdot w(r)$ and $\sin(r \sqrt{\lambda})$ at large radii; and the profile $r \cdot w(r)$ attains $k$ additional nodes when compared with $\sin(r \sqrt{\lambda})$ in sufficiently large intervals containing $0$.

Looking back to the asymptotic conditions imposed in Corollaries~\ref{cor_asymptotic}~and~\ref{cor_alpha-beta}, we see that they are of the form 
\begin{align*}
	- \Delta w - \lambda w = g \cdot w \quad \text{on } \R^3,
	\qquad
	\omega_\lambda(g) \in \omega + \pi \Z.
\end{align*}
Such boundary conditions at infinity will provide operators with spectral properties suitable for building the functional analytic framework in which to prove Theorem~\ref{THEtheorem}.  

\begin{rmk}
	The previous results are closely related to those in Corollary~\ref{cor_asymptotic}. 
	In fact, comparing the asymptotic expansions in Corollary~\ref{cor_asymptotic} applied with $f = g \cdot w$ 
	and Proposition~\ref{prop_pruefer}, we identify $\omega_\lambda(g) \in \omega + \pi \Z$ and $\rho_\lambda(g) = |\gamma|$.
	
	We point out two aspects in which Proposition~\ref{prop_pruefer} provides stronger statements: 
	First, there is no singularity in case $\omega = 0$ as it appears in the
	definition~\eqref{eq_R-lambda-omega} of the convolution operators $\mathcal{R}_\lambda^\omega$. 
	Second, we explicitly have $\rho_\lambda(g) > 0$.
	However, in order to construct the functional analytic setting when proving Theorem~\ref{THEtheorem},
	we will use the convolution operators $\mathcal{R}_\lambda^\omega$ due to their
	differentiability and compactness properties, see Proposition~\ref{prop_cont-comp}. The ODE results 
	will then be helpful to extract spectral properties.
\end{rmk}

As a first auxiliary result, we prove the following continuity property.

\begin{prop}\label{prop_aspt-phase-continuous}
The asymptotic phase is continuous as a map $\omega_\lambda: X_2 \to \R, g \mapsto \omega_\lambda(g)$.
\end{prop}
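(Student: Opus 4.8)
The plan is to deduce continuity directly from the explicit representation \eqref{eq_aspt-phase}, combined with continuous dependence of the auxiliary phase ODE on its datum $g$; here $\lambda>0$ is fixed throughout. The one structural fact I would record first is the elementary embedding $X_2\hookrightarrow L^1(0,\infty)$ (with respect to Lebesgue measure): for $g\in X_2$ one has $|g(r)|\le(1+r^2)^{-1}\norm{g}_{X_2}$, hence $\int_0^\infty|g(r)|\,\mathrm{d}r\le\frac{\pi}{2}\norm{g}_{X_2}$. In particular the integral defining $\omega_\lambda(g)$ converges absolutely and the right-hand side of the $\phi$-equation in \eqref{eq_aspt-phase} is integrable in $r$, uniformly on bounded subsets of $X_2$.

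The first main step is to show that $g\mapsto\phi_g$ is continuous from $X_2$ into $C_b([0,\infty))$, where $\phi_g$ solves the initial value problem in \eqref{eq_aspt-phase}. Given $g,\tilde g\in X_2$, I would subtract the two integral equations and use the Lipschitz bound $|\sin^2 a-\sin^2 b|=|\sin(a+b)\sin(a-b)|\le|a-b|$ to obtain, for every $r\ge0$,
\begin{equation*}
	|\phi_g(r)-\phi_{\tilde g}(r)|
	\le \tfrac{1}{\lambda}\norm{g-\tilde g}_{L^1(0,\infty)}
	+\tfrac{1}{\sqrt\lambda}\int_0^r|g(s)|\,|\phi_g(s)-\phi_{\tilde g}(s)|\,\mathrm{d}s.
\end{equation*}
Since $|g|\in L^1(0,\infty)$, Gronwall's inequality in integral form yields
\begin{equation*}
	\sup_{r\ge0}|\phi_g(r)-\phi_{\tilde g}(r)|
	\le\tfrac{1}{\lambda}\norm{g-\tilde g}_{L^1(0,\infty)}\,
	\exp\!\bigl(\tfrac{1}{\sqrt\lambda}\norm{g}_{L^1(0,\infty)}\bigr),
\end{equation*}
which together with the embedding above gives the claimed (locally Lipschitz) continuity of $g\mapsto\phi_g$.

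The second step is to pass to the limit in \eqref{eq_aspt-phase}. Adding and subtracting $\tilde g\,\sin^2(\phi_g\sqrt\lambda)$ and using $|\sin^2 a-\sin^2 b|\le|a-b|$ once more,
\begin{align*}
	\sqrt\lambda\,|\omega_\lambda(g)-\omega_\lambda(\tilde g)|
	&\le\int_0^\infty|g(r)-\tilde g(r)|\,\mathrm{d}r
	+\sqrt\lambda\int_0^\infty|\tilde g(r)|\,|\phi_g(r)-\phi_{\tilde g}(r)|\,\mathrm{d}r\\
	&\le\norm{g-\tilde g}_{L^1(0,\infty)}
	+\sqrt\lambda\,\norm{\tilde g}_{L^1(0,\infty)}\,\sup_{r\ge0}|\phi_g(r)-\phi_{\tilde g}(r)|,
\end{align*}
and both terms tend to $0$ as $\tilde g\to g$ in $X_2$, by the first step and the embedding. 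Hence $\omega_\lambda$ is continuous.

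I do not expect a serious obstacle. The only point needing mild care is that the right-hand side of the $\phi$-equation is Lipschitz in $\phi$ with constant $\tfrac{1}{\sqrt\lambda}|g(r)|$, which is merely integrable in $r$ and not bounded; this is precisely why one must invoke the integral form of Gronwall rather than a pointwise-in-$r$ Lipschitz estimate. Global existence of $\phi_g$ on $[0,\infty)$ and convergence of the integral defining $\omega_\lambda(g)$ are already furnished by Proposition~\ref{prop_pruefer}, so no further work is required on those points.
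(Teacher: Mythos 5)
Your proof is correct, and it follows the same outline as the paper's: compare the two integral equations for the Pr\"ufer phases, use the Lipschitz bound $|\sin^2 a-\sin^2 b|\le|a-b|$, apply Gronwall, and then pass to the limit in the formula \eqref{eq_aspt-phase}. The one genuine difference is in how you run Gronwall. The paper estimates the perturbation term by $\frac{2\norm{g_0}_\infty}{\sqrt{\lambda}}\int_0^r|\phi_n-\phi_0|$, so the exponential factor grows like $e^{CR}$ and one only gets \emph{locally} uniform convergence of the phases; the proof must then be closed with the Dominated Convergence Theorem, using the majorant $(1+r^2)^{-1}\sup_n\norm{g_n}_{X_2}$. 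You instead keep the integrable kernel $\frac{1}{\sqrt{\lambda}}|g(s)|$ inside Gronwall, which (since $X_2\hookrightarrow L^1(0,\infty)$) yields a bound on $\sup_{r\ge 0}|\phi_g-\phi_{\tilde g}|$ that is uniform in $r$, and then the final estimate for $|\omega_\lambda(g)-\omega_\lambda(\tilde g)|$ is fully quantitative. This buys you slightly more than the statement asks for — local Lipschitz continuity of $\omega_\lambda$ on $X_2$ rather than mere sequential continuity — and it dispenses with the dominated-convergence step entirely. Both arguments are sound; yours is the sharper of the two.
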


When studying eigenvalue problems of a linearization of~\eqref{eq_system} as often required in Bifurcation Theory, it will be helpful to know the dependence of the asymptotic phase $\omega_\lambda(b \, u_0^2)$ on the (eigenvalue) parameter $b \in \R$. Here we denote by $u_0 \in X_1 \cap C^2(\R^3)$ some solution of $-\Delta u_0 - \mu u_0 = u_0^3$ on $\R^3$. 

\begin{prop}\label{prop_asymptoticphase}
	The map $\R \to \R, \: b \mapsto \omega_\lambda(b \: u_0^2)$ 
	is continuous, strictly increasing and onto with $\omega_\lambda(0) = 0$.
\end{prop}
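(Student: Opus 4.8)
The plan is to reduce everything to the Prüfer angle of Proposition~\ref{prop_pruefer}. For $g=b\,u_0^2$ let $\phi$ be the function from~\eqref{eq_aspt-phase} (with the datum in~\eqref{eq_aspt-phase} equal to $b\,u_0^2$) and set $\theta_b:=\sqrt{\lambda}\,\phi$, so that
\begin{align*}
	\theta_b' = \sqrt{\lambda} + \tfrac{1}{\sqrt\lambda}\,b\,u_0^2(r)\,\sin^2\theta_b, \qquad \theta_b(0)=0,
\end{align*}
and, by Proposition~\ref{prop_pruefer} together with~\eqref{eq_aspt-phase},
\begin{align*}
	\omega_\lambda(b\,u_0^2)=\int_0^\infty\!\big(\theta_b'(r)-\sqrt\lambda\big)\,\mathrm{d}r=\lim_{r\to\infty}\big(\theta_b(r)-r\sqrt\lambda\big).
\end{align*}
Two facts will be used throughout: since $u_0\not\equiv0$ solves the radial ODE associated with~\eqref{eq_singleHH}, uniqueness for that ODE forces $u_0(0)\neq0$ and $u_0'(r_\ast)\neq0$ at each zero $r_\ast>0$ of $u_0$, so the zeros of $u_0$ are isolated and $u_0^2>0$ almost everywhere; and $u_0\in X_1$ gives $u_0^2\in X_2$ with $u_0^2(r)\le\norm{u_0}_{X_1}^2(1+r^2)^{-1}$, so the profile $u_0^2$ belongs to $L^1(0,\infty)$. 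The two easy assertions follow at once: for $b=0$ one has $\theta_0(r)=r\sqrt\lambda$ and hence $\omega_\lambda(0)=0$; and $b\mapsto b\,u_0^2$ is continuous $\R\to X_2$, so continuity of $b\mapsto\omega_\lambda(b\,u_0^2)$ is immediate from Proposition~\ref{prop_aspt-phase-continuous}.

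For \emph{strict monotonicity} I would compare $\theta_1:=\theta_{b_1}$ and $\theta_2:=\theta_{b_2}$ for $b_1<b_2$. Using $\sin^2\theta_2-\sin^2\theta_1=\sin(\theta_1+\theta_2)\sin(\theta_2-\theta_1)$, the difference $\psi:=\theta_2-\theta_1$ satisfies $\psi(0)=0$ and can be written as a linear equation
\begin{align*}
	\psi'=c(r)\,\psi+h(r),\qquad c(r):=\tfrac{b_2}{\sqrt\lambda}\,u_0^2(r)\,\sin(\theta_1+\theta_2)\,\tfrac{\sin\psi}{\psi},\qquad h(r):=\tfrac{b_2-b_1}{\sqrt\lambda}\,u_0^2(r)\,\sin^2\theta_1,
\end{align*}
where $\tfrac{\sin\psi}{\psi}$ is read as $1$ where $\psi=0$. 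Here $|c(r)|\le\tfrac{|b_2|}{\sqrt\lambda}u_0^2(r)\in L^1(0,\infty)$, while $h\ge0$ and $h>0$ a.e.\ because the zeros of $u_0$ and of $\sin\theta_1$ are isolated. Variation of constants gives $\psi(r)=\int_0^r\exp\!\big(\int_s^r c\big)\,h(s)\,\mathrm{d}s>0$ for every $r>0$, and for $r\ge r_0>0$ one gets $\psi(r)\ge e^{-\norm{c}_{L^1(0,\infty)}}\psi(r_0)>0$; letting $r\to\infty$ yields $\omega_\lambda(b_2u_0^2)-\omega_\lambda(b_1u_0^2)=\lim_{r\to\infty}\psi(r)>0$.

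Since a continuous strictly increasing function is \emph{onto} $\R$ as soon as its range is unbounded above and below, it remains to treat $b\to\pm\infty$. For $b>0$ we have $\theta_b'\ge\sqrt\lambda$, so $r\mapsto\theta_b(r)-r\sqrt\lambda$ is nondecreasing; picking $[r_1,r_2]\subset(0,\infty)$ with $u_0^2\ge\delta>0$ there and using $\int_0^\pi(\sqrt\lambda+\tfrac{b\delta}{\sqrt\lambda}\sin^2\theta)^{-1}\mathrm{d}\theta=\pi(\lambda+b\delta)^{-1/2}$, a comparison argument shows $\theta_b$ increases by at least $\pi\lfloor(r_2-r_1)\sqrt{\lambda+b\delta}/\pi\rfloor$ across $[r_1,r_2]$, whence $\omega_\lambda(bu_0^2)\ge\theta_b(r_2)-r_2\sqrt\lambda\to+\infty$. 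The genuinely delicate direction — and the step I expect to be the main obstacle — is $b\to-\infty$. Now $\theta_b'\le\sqrt\lambda$, so $\omega_\lambda(bu_0^2)\le\theta_b(T)-T\sqrt\lambda$ for every fixed $T>0$, and the claim reduces to showing that $\theta_b$ is \emph{trapped} on $[0,T]$: for any $\varepsilon\in(0,\tfrac\pi2)$ one has $\theta_b(r)<\pi-\varepsilon$ on $[0,T]$ once $b$ is sufficiently negative. To see this, observe that on $E_b:=\{r\in[0,T]:u_0^2(r)\ge\tfrac{2\lambda}{|b|\sin^2\varepsilon}\}$ one has $\theta_b'\le-\sqrt\lambda<0$ whenever $\theta_b(r)\in[\varepsilon,\pi-\varepsilon]$; hence $\theta_b$ can increase while its value lies in $[\varepsilon,\pi-\varepsilon]$ only at points of $[0,T]\setminus E_b$, a set whose Lebesgue measure tends to $0$ as $b\to-\infty$ (since $u_0^2>0$ off the isolated zeros), and there $\theta_b'\le\sqrt\lambda$. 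Thus the total upward motion of $\theta_b$ accumulated while $\theta_b\in[\varepsilon,\pi-\varepsilon]$ is at most $\sqrt\lambda\,\lvert[0,T]\setminus E_b\rvert=o(1)$, which for $b$ negative enough is $<\pi-2\varepsilon$ and therefore makes it impossible for $\theta_b$ to climb from a value $\le\varepsilon$ to a value $\ge\pi-\varepsilon$ (a standard first-/last-crossing argument). Consequently $\theta_b(T)<\pi$, so $\omega_\lambda(bu_0^2)\le\pi-T\sqrt\lambda$ for all sufficiently negative $b$; letting $T\to\infty$ gives $\omega_\lambda(bu_0^2)\to-\infty$. Making the measure estimate for $[0,T]\setminus E_b$ and the no-climbing argument fully rigorous, while keeping track of the finitely many isolated zeros of $u_0$ in $[0,T]$, is the technical core; the monotonicity and the $b\to+\infty$ limit are routine Prüfer comparisons.
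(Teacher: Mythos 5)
Your proof is correct, and its overall architecture coincides with the paper's: reduction to the Prüfer angle, monotonicity via a linear ODE for the difference of two angles plus variation of constants, and surjectivity via the limits $b\to\pm\infty$ combined with continuity. The monotonicity step is the same argument up to cosmetics (you linearize $\sin^2\theta_2-\sin^2\theta_1$ with the product formula and attach the coefficient $b_2$ to the homogeneous part, the paper uses a difference quotient $\chi$ and attaches $b_1$; both give an integrable coefficient and a nonnegative, a.e.\ positive source, hence a strictly positive limit). For $b\to+\infty$ both proofs compare with a constant-potential Prüfer equation on an interval where $u_0^2$ is bounded below near $r=0$; you count full periods via $\int_0^\pi(\sqrt\lambda+\tfrac{b\delta}{\sqrt\lambda}\sin^2\theta)^{-1}\,\mathrm{d}\theta=\pi(\lambda+b\delta)^{-1/2}$ and use that $\theta_b(r)-r\sqrt\lambda$ is nondecreasing, while the paper writes down the explicit $\arctan$ solution and substitutes in the phase integral — equivalent in substance. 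The genuine divergence is the direction you correctly identify as the core, $b\to-\infty$: the paper introduces the last crossing time $r_b$ of the small level $\tfrac{1}{\sqrt\lambda}\arcsin(|b|^{-1/4})$, proves $r_b\to\infty$ by a contradiction argument integrating the angle equation over a unit interval, and then uses $\phi_b'\le 1$; you instead prove a trapping statement, namely that on any fixed $[0,T]$ the angle $\theta_b$ stays below $\pi$ for $b$ negative enough, via the measure estimate on $\{u_0^2<2\lambda/(|b|\sin^2\eps)\}$ together with a first/last-crossing argument in the band $[\eps,\pi-\eps]$. Your version is a standard and clean oscillation-trapping argument whose only inputs are the simplicity (hence isolatedness) of the zeros of $u_0$ and continuity of Lebesgue measure, and it avoids the paper's sequential contradiction; the paper's version has the mild advantage of producing a quantitative upper bound $\omega_\lambda(b\,u_0^2)\le\arcsin(|b|^{-1/4})-r_b\sqrt\lambda$ in terms of $r_b$. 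Both are complete; I see no gap in yours, provided you carry out the first/last-crossing bookkeeping you sketch (i.e.\ the inequality $\pi-2\eps\le\sqrt\lambda\,|[0,T]\setminus E_b|$ on the climbing interval), which is routine.
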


\subsection{The spectrum of the linearization}

In the proof of Theorem~\ref{THEtheorem}, we will rewrite the nonlinear Helmholtz system~\eqref{eq_system} in the form
\begin{align*}
	u = \mathcal{R}_\mu^\tau (u(u^2 + b \, v^2)),
	\qquad
	v = \mathcal{R}_\nu^\omega (v(v^2 + b \, u^2)),
	\qquad
	u, v \in X_1
\end{align*}
for some $\tau, \omega \in (0, \pi)$, which additionally imposes a certain asymptotic behavior on the solutions, see Corollary~\ref{cor_asymptotic}. In order to analyze the linearized problem, we fix some nontrivial $u_0 \in X_1 \cap C^2(\R^3)$ with $-\Delta u_0 - \mu u_0 = u_0^3$ on $\R^3$ and study the spectra of the linear operators
\begin{align}\label{eq_R-lin}
	\mathbf{R}_\lambda^\omega: X_1 \to X_1, 
	\qquad
	w \mapsto 
	\mathcal{R}_\lambda^\omega (u_0^2 \, w) = \left( \Psi_\lambda + \cot (\omega) \: \tilde{\Psi}_\lambda \right) \ast [u_0^2 \, w],
\end{align}
which are compact thanks to Proposition~\ref{prop_cont-comp}~(b).
We now present the final result in this Section:

\begin{prop}\label{prop_spectrum}
	Let $\omega \in (0, \pi)$, $\lambda > 0$ and $u_0$ as before. 
	By Proposition~\ref{prop_asymptoticphase}, for $k \in \Z$, we define $b_k(\omega, \lambda, u_0^2) \in \R$ via 
	$\omega_\lambda (b_k(\omega, \lambda, u_0^2) \, u_0^2)  = \omega + k \pi$. 
	Then the spectrum of $\mathbf{R}_\lambda^\omega$ is
	\begin{align*}
	\sigma(\mathbf{R}_\lambda^\omega) = \{ 0 \} \cup \sigma_\mathrm{p}(\mathbf{R}_\lambda^\omega),
	\quad
	\sigma_\mathrm{p}(\mathbf{R}_\lambda^\omega) = \left\{ \frac{1}{b_k(\omega, \lambda, u_0^2)} \: \bigg| \: k \in \Z \right\}.
	\end{align*}
	Moreover, all eigenvalues are algebraically simple, and the sequence $(b_k(\omega, \lambda, u_0^2))_{k \in \Z}$ 
	is strictly increasing and unbounded below and above.
\end{prop}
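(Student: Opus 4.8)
The plan is to analyze the compact operator $\mathbf{R}_\lambda^\omega$ by translating its eigenvalue equation into the ODE framework of Proposition~\ref{prop_pruefer} and Corollary~\ref{cor_asymptotic}. First I would record that $\mathbf{R}_\lambda^\omega$ is compact (Proposition~\ref{prop_cont-comp}(b)), so by Riesz--Schauder theory $\sigma(\mathbf{R}_\lambda^\omega) = \{0\} \cup \sigma_\mathrm{p}(\mathbf{R}_\lambda^\omega)$ with the nonzero spectrum consisting of eigenvalues of finite multiplicity accumulating only at $0$; this gives the first displayed identity once the point spectrum is identified. For the point spectrum, fix $s \neq 0$ and suppose $\mathbf{R}_\lambda^\omega w = s\, w$ for some $w \in X_1 \setminus \{0\}$. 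Then $w = \mathcal{R}_\lambda^\omega(u_0^2 w) = s^{-1}\mathcal{R}_\lambda^\omega((s^{-1} u_0^2) \cdot s w)$; more directly, setting $b := 1/s$, the equation reads $b\, w = \mathcal{R}_\lambda^\omega(u_0^2 w)$, i.e.\ $w = \mathcal{R}_\lambda^\omega(b\, u_0^2 \cdot \tfrac{w}{b})$ — cleaner: $w = \mathcal{R}_\lambda^\omega(u_0^2 w)/s$, so $\mathcal{R}_\lambda^\omega(b\,u_0^2 \, w) = w$ with $b = 1/s$ after absorbing constants; by Corollary~\ref{cor_asymptotic} applied with $f = b\,u_0^2\, w \in X_3$ (note $u_0^2 \in X_2$, $w \in X_1$, so the product lies in $X_3$), this is equivalent to $w$ being a $C^2$ solution of
\begin{align*}
	-\Delta w - \lambda w = b\, u_0^2\, w \quad \text{on } \R^3,
	\qquad
	w(x) = \gamma\, \frac{\sin(|x|\sqrt{\lambda} + \omega)}{|x|} + O\!\left(\frac{1}{|x|^2}\right)
\end{align*}
for some $\gamma \in \R$.

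Next I would invoke radial ODE uniqueness: any radial $C^2$ solution of $-\Delta w - \lambda w = b\, u_0^2 w$ that is bounded (indeed in $X_1$) near the origin is, up to a scalar multiple, the solution of the initial value problem~\eqref{eq_single-pruefer} with $g = b\, u_0^2$ (the second linearly independent solution behaves like $|x|^{-1}$ at $0$ and is excluded by $w \in X_1$ being continuous at the origin — here one uses that $w$ being a genuine $C^2$ solution on all of $\R^3$, not just on $\R^3 \setminus \{0\}$, forces the regular branch). By Proposition~\ref{prop_pruefer}, that solution has asymptotic phase $\omega_\lambda(b\, u_0^2)$, so the asymptotic condition with phase $\omega$ holds if and only if $\omega_\lambda(b\, u_0^2) \in \omega + \pi\Z$, i.e.\ $\omega_\lambda(b\, u_0^2) = \omega + k\pi$ for some $k \in \Z$. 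By Proposition~\ref{prop_asymptoticphase} the map $b \mapsto \omega_\lambda(b\, u_0^2)$ is a continuous, strictly increasing bijection of $\R$ onto $\R$ with value $0$ at $b = 0$; hence for each $k \in \Z$ there is exactly one $b = b_k(\omega, \lambda, u_0^2)$ with $\omega_\lambda(b\, u_0^2) = \omega + k\pi$, and these are distinct, strictly monotone in $k$, and unbounded above and below (since $\omega + k\pi \to \pm\infty$). Note $b_k \neq 0$ because $\omega + k\pi \neq 0$ for $\omega \in (0,\pi)$, so $s = 1/b_k$ is well-defined and nonzero. This shows $\sigma_\mathrm{p}(\mathbf{R}_\lambda^\omega) = \{1/b_k(\omega,\lambda,u_0^2) : k \in \Z\}$, with each eigenspace one-dimensional (geometric simplicity) since the ODE solution is unique up to scaling.

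It remains to upgrade geometric simplicity to algebraic simplicity, i.e.\ to show $\ker(\mathbf{R}_\lambda^\omega - s)^2 = \ker(\mathbf{R}_\lambda^\omega - s)$ for each eigenvalue $s = 1/b_k$. Writing $w_0$ for a generator of the kernel, suppose $(\mathbf{R}_\lambda^\omega - s) w_1 = w_0$ for some $w_1 \in X_1$. Unwinding as above, $w_1$ solves $-\Delta w_1 - \lambda w_1 = b_k u_0^2 w_1 + b_k u_0^2 w_0 \cdot(\text{const})$ — more precisely $\mathcal{R}_\lambda^\omega(u_0^2 w_1) = s w_1 + w_0$, so $w_1$ is a $C^2$ solution of an inhomogeneous equation $-\Delta w_1 - \lambda w_1 - b_k u_0^2 w_1 = c\, u_0^2 w_0$ for an explicit constant $c \neq 0$, with the same $\omega$-asymptotics. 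Testing against $w_0$ and integrating by parts over large balls (controlling boundary terms via the $O(|x|^{-2})$ decay of the asymptotic expansions in Proposition~\ref{prop_pruefer}, which makes the Wronskian-type boundary integrals vanish in the limit), the left side pairs to zero by self-adjointness of the formal operator, forcing $c \int_{\R^3} u_0^2 w_0^2 = 0$, a contradiction since $u_0 \not\equiv 0$ and $w_0 \not\equiv 0$. Hence no Jordan chain of length two exists and all eigenvalues are algebraically simple.

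The main obstacle I anticipate is the rigorous handling of the boundary terms in this last Fredholm-type solvability argument: the functions $w_0, w_1$ decay only like $|x|^{-1}$ and oscillate, so $\int_{\R^3} u_0^2 w_0^2$ converges (since $u_0^2 \in X_2$ gives integrable weight) but the integration-by-parts identity over $B_R$ produces a surface term of size $R^2 \cdot O(R^{-1}) \cdot O(R^{-1}) = O(1)$ that does \emph{not} obviously vanish — one must extract the precise oscillatory cancellation, using that both $w_0$ and $w_1$ share the \emph{same} asymptotic phase $\omega$ so that the leading $\sin(r\sqrt\lambda+\omega)\cos(r\sqrt\lambda+\omega)$ cross terms in the Wronskian $w_0 w_1' - w_0' w_1$ cancel and leave an $O(R^{-1})$ remainder. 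Making this cancellation precise (likely via the refined expansions of $w$ and $w'$ in Proposition~\ref{prop_pruefer}, or equivalently by noting $r^2(w_0 w_1' - w_0' w_1) \to$ a computable constant multiple of $\gamma_0 \gamma_1 \sin(\omega-\omega) = 0$) is the technical heart of the proof; everything else is a direct translation through the already-established representation and ODE results.
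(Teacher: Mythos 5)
Your proposal is correct and follows essentially the same route as the paper: compactness plus Riesz--Schauder for the structure of the spectrum, Corollary~\ref{cor_asymptotic} to convert the eigenvalue equation into the ODE with $\omega$-asymptotics, Propositions~\ref{prop_pruefer} and~\ref{prop_asymptoticphase} to identify the eigenvalues as $1/b_k$ with one-dimensional eigenspaces, and for algebraic simplicity the cross-multiplied Green/Wronskian identity $\bigl(r^2(w'v - v'w)\bigr)' = \eta^{-1} r^2 u_0^2 w^2$ with the boundary term killed by the fact that both functions carry the same asymptotic phase $\omega$ (so $r^2(w'v-v'w)=O(1/r)$ by the expansions of $w,w'$ in Proposition~\ref{prop_cont-comp}(d)). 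The "technical heart'' you flag is exactly the step the paper carries out, and your suggested resolution is the one used there.
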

This excludes the case $\omega = 0$, even though the values $b_k(0, \lambda, u_0^2) \in \R$, $k \in \Z$, can be defined accordingly. 
Indeed, the first step of the proof of Proposition~\ref{prop_spectrum} above provides the following statement for all $\omega \in [0, \pi)$: 
\begin{rmk}
Fix $\omega \in [0, \pi)$. Then the problem
\begin{align*}
	- \Delta w - \lambda w = b u_0^2 \: w \quad \text{on } \R^3,
	\qquad
	w(x) = \gamma \frac{\sin(|x|\sqrt{\lambda} + \omega)}{|x|} + O\left(\frac{1}{|x|^2}\right)
	\quad \text{as } |x| \to \infty
\end{align*}
for some $\gamma \in \R$ has a nontrivial radial solution $w \in X_1 \cap C^2(\R^3)$ if and only if $b = b_k(\omega, \lambda, u_0^2)$ for some $k \in \Z$.
\end{rmk}

\section{Proof of Theorem~\ref{THEtheorem}}\label{sect_prf1}

We will first present the proof in case of asymptotic parameters $0 < \omega, \tau_1 < \pi$, $\tau_1 \neq \tau_0$, which more clearly exhibits the main ideas of exploiting suitable asymptotic properties of solutions. Afterwards, we demonstrate the modifications required in order to cover the case $\omega = 0$. 

\subsection*{The case $\omega \in (0, \pi)$ and $\tau_1 \in (0, \pi) \setminus \{ \tau_0 \}$.}

\step{1}{The Setting.}

Let $\omega \in (0, \pi)$. We define the map
\begin{align*}
	F: \quad & X_1 \times X_1 \times \R \to X_1 \times X_1,
	\\
	F(w, v, b) := & \svec{
	w - \mathcal{R}_\mu^{\tau_1} (w^3 + 3 u_0 w^2 + 3 u_0^2 w + b \: (u_0 + w) v^2)
	}{
	v - \mathcal{R}_\nu^\omega ( v^3 + b v (u_0+ w)^2 )
	}
\end{align*}
with the convolution operators $\mathcal{R}_\mu^{\tau_1}, \mathcal{R}_\nu^\omega: X_3 \to X_1$ from Definition~\eqref{eq_R-lambda-omega}. Observe that $F$ is well-defined since $u, v, w \in X_1$ implies $uvw \in X_3$. 
Recalling Corollary~\ref{cor_asymptotic} and~\eqref{eq_singleHH}, we have
\begin{align*}
	F(w, v, b) = 0 
	\quad \Leftrightarrow \quad
	(u, v, b) := (u_0 + w, v, b) \text{ satisfies~\eqref{eq_system} with asymptotics~\eqref{eq_asymptotic}}. 
\end{align*}
So we aim to find nontrivial zeros of $F$.
Second, we observe that $F$ has a trivial solution family, that is $F(0, 0, b) = 0$ holds for every $b \in \R$.
Third, $F(\,\cdot\, , b)$ is a compact perturbation of the identity on $X_1 \times X_1$ since the operators $\mathcal{R}_\mu^{\tau_1}, \mathcal{R}_\nu^\omega: X_3 \to X_1$ are compact thanks to Proposition~\ref{prop_cont-comp}~(b). 
Moreover, $F$ is twice continuously Fr\'{e}chet differentiable; we have for $\varphi, \psi \in X_1$ and $b \in \R$, denoting by $D$ the Fr\'{e}chet derivative w.r.t. the $w$ and $v$ components, 
\begin{align}\label{eq_diff1}
	D F(0, 0, b)[(\varphi, \psi)] 
	&= \svec{\varphi}{\psi} - \svec{
	3 \: \mathcal{R}_\mu^{\tau_1} (u_0^2 \: \varphi) 
	}{
	b \: \mathcal{R}_\nu^\omega ( u_0^2 \: \psi )
	} =
	\svec{
	\varphi - 3 \: \mathbf{R}_\mu^{\tau_1} \varphi 
	}{
	\psi - b \: \mathbf{R}_\nu^\omega \psi
	}
\end{align}
with compact linear operators $\mathbf{R}_\mu^{\tau_1}, \mathbf{R}_\nu^\omega: X_1 \to X_1$ as in equation~\eqref{eq_R-lin}. We deduce that, due to~\eqref{eq_nondegen-tau0} and $\tau_1 \neq \tau_0$, 
$D F(0, 0, b)[\varphi, \psi]  = 0$ implies $\varphi = 0$. 
So nontrivial elements of $\ker D F(0, 0, b)$ are of the form $(0, \psi)$ where $\psi$ satisfies $\psi = b \: \mathbf{R}_\nu^\omega \psi$.
Proposition~\ref{prop_spectrum} reveals that such nontrivial $\psi$ exists if and only if $b = b_k(\omega, \nu, u_0^2)$, i.e. $\omega_\nu(b \, u_0^2) = k\pi + \omega$ for some $k \in \Z$, and that the associated eigenspaces are one-dimensional. We denote $b_k(\omega)$ instead of $b_k(\omega, \nu, u_0^2)$.
Thus $b \in \{ b_k(\omega) \: | \: k \in \Z \}$ is a necessary condition for bifurcation of solutions of $F(w, v, b) = 0$ from $(0, 0, b)$.
We show in the following that it is also sufficient.

\step{2}{Local Bifurcation.}

We apply the Crandall-Rabinowitz Bifurcation Theorem and, to this end, verify its simplicity and transversality assumptions at the point $(0, 0, b_k(\omega))$. As $F(\,\cdot\, ,b)$ is a compact perturbation of the identity on $X_1 \times X_1$, the Riesz-Schauder Theorem implies that $D F(0, 0, b_k(\omega))$ is a Fredholm operator of index zero. By the previous step, 
\begin{align*}
	\ker D F(0, 0, b_k(\omega)) = \text{span } \left\{ \svec{0}{\psi_k} \right\}
\end{align*}
for some $\psi_k \in X_1 \setminus \{ 0 \}$. To see that the transversality condition holds, we first compute
\begin{align*}
	\partial_b D F(0, 0, b_k(\omega)) [(0, \psi_k)] 
	\overset{\eqref{eq_diff1}}{=} - \svec{0}{\mathbf{R}_\nu^\omega \psi_k} 
	= -\frac{1}{b_k(\omega)} \svec{0}{\psi_k}.
\end{align*}
Then, assuming there was $v \in X_1$ with $v - b_k(\omega) \: \mathbf{R}_\nu^\omega v = \psi_k$, we conclude
\begin{align*}
	v \in \ker (I - b_k(\omega) \, \mathbf{R}_\nu^\omega)^2 \setminus \ker (I - b_k(\omega) \, \mathbf{R}_\nu^\omega),
\end{align*}
which contradicts the algebraic simplicity of the eigenvalue $b_k(\omega)^{-1}$ of $\mathbf{R}_\nu^\omega$ proved in Proposition~\ref{prop_spectrum}. Thus $\partial_b D F(0, 0, b_k(\omega)) [(0, \psi_k)]  \not\in \text{ran } D F(0, 0, b_k(\omega))$,
and the Crandall-Rabinowitz Bifurcation Theorem provides the curve of solutions of $F(w, v, b) = 0$ as described in (ii). We remark that it is smooth since $F$ is of class $C^\infty$. Further, possibly shrinking the neighborhood where the local result holds, we may w.l.o.g. assume fully nontrivial solutions $(u_0 + w, v)$ of~\eqref{eq_system} since the direction of bifurcation with respect to $X_1 \times X_1$ is given by $(0, \psi_k)$.

\step{3}{Global Bifurcation.}

We have already seen that $F(\,\cdot\, , b)$, $b \in \R$, is a compact perturbation of the identity on $X_1 \times X_1$. 
Thus the application of Rabinowitz' Global Bifurcation Theorem only requires to verify that the index of $F(\,\cdot\, , b)$ in $(0, 0)$ changes sign at each value $b = b_k(\omega)$, $k \in \Z$. By the identity~\eqref{eq_diff1}, for $b \not\in \{ b_k(\omega) \: | \: k \in \Z \}$,
\begin{align*}
	\text{ind}_{X_1 \times X_1} \big( F(\,\cdot\, , b), (0, 0) \big)
	&= 
	\text{ind}_{X_1 \times X_1} \big( DF(0, 0, b), (0, 0) \big) 
	\\
	&\overset{\eqref{eq_diff1}}{=} 
	\text{ind}_{X_1}  \big( I - 3 \:  \mathbf{R}_\mu^{\tau_1}, 0 \big) \cdot 
	\text{ind}_{X_1} \big(I - b \: \mathbf{R}_\nu^\omega, 0 \big),
\end{align*}
and hence $\text{ind}_{X_1 \times X_1} \big( F(\,\cdot\, , b), (0, 0) \big)$ changes sign at $b = b_k(\omega)$ if and only if so does 
$\text{ind}_{X_1} \big(I - b \: \mathbf{R}_\nu^\omega, 0 \big)$. The latter change of index occurs since $b_k(\omega)$ is an isolated eigenvalue of algebraic multiplicity 1 of $\mathbf{R}_\nu^\omega$, see Proposition~\ref{prop_spectrum}.

Notice that, by Step 2, $(u_0, 0, b_k(\omega)) \in \overline{\mathcal{S}(\omega)}$, 
and  the Global Bifurcation Theorem by Rabinowitz asserts that the associated connected component $\mathcal{C}_k(\omega)$ of $\overline{\mathcal{S}(\omega)}$  is unbounded or returns to the trivial branch at some point $(u_0, 0, b^\ast) \in \mathcal{T}_{u_0}$. We prove that, in any case, the component is unbounded. 

To see this, we recall the asymptotic phase $\omega_\nu$ as introduced in Proposition~\ref{prop_pruefer}. It satisfies $\omega_\nu(b_k(\omega) u_0^2) = \omega + k \pi$ by definition of $b_k(\omega)$, see Step 1, as well as $\omega_\nu(v^2 + b u^2) \in \omega + \pi \Z$ for all $(u, v, b) \in \mathcal{C}_k(\omega)$ with $v \neq 0$ due to~\eqref{eq_asymptotic} - indeed, we recall here that Proposition~\ref{prop_pruefer} rules out the case that $v \neq 0$ with $v(x) = O\left(\frac{1}{|x|^2}\right)$ as $|x| \to \infty$. 

So if all elements $(u, v, b) \in \mathcal{C}_k(\omega) \setminus \mathcal{T}_{u_0}$ satisfy $v \neq 0$, then as a consequence of the continuity of $\omega_\nu$ as stated in Proposition~\ref{prop_aspt-phase-continuous} and of the fact that $\mathcal{C}_k(\omega)$ is connected by definition, we infer that $\omega_\nu(v^2 + b u^2) = \omega + k \pi$ for all $(u, v, b) \in \mathcal{C}_k(\omega)$.
Let us now assume that $\mathcal{C}_k(\omega)$ returns to the trivial family in some point $(u_0, 0, b^\ast) \in \mathcal{T}_{u_0}$, $b^\ast \neq b_k(\omega)$. Then $\omega_\nu(b^\ast u_0^2) \neq \omega + k \pi$, hence $(u, v, b) \mapsto \omega_\nu(v^2 + b u^2)$ is not constant on $\mathcal{C}_k(\omega)$. Thus, there exists a semitrivial element $(u_1, 0, b_1) \in \mathcal{C}_k(\omega) \setminus \mathcal{T}_{u_0}$, $u_1 \neq u_0$. Since $\mathcal{C}_k(\omega)$ is maximal connected, it contains the unbounded semitrivial family $\mathcal{T}_{u_1} = \{ (u_1, 0, b) \: | \: b \in \R \}$.

\subsection*{The case $\omega = 0$ and $\tau_1 \in (0, \pi) \setminus \{ \tau_0 \}$.}

\step{1}{The Setting.}

We recall that, in case $\omega = 0$, the map $F$ resp. $\mathcal{R}_\nu^\omega$ is not well-defined due to the pole of the cotangent.
We replace it by
\begin{align*}
	G_\sigma: \quad & X_1 \times X_1 \times \R \to X_1 \times X_1,
	\\
	G_\sigma(w, v, b) := & \svec{
	w - \mathcal{R}_\mu^{\tau_1} (w^3 + 3 u_0 w^2 + 3 u_0^2 w + b \: (u_0 + w) v^2)
	}{
	v - \Psi_\nu \ast [v \, (v^2 + b \, (w + u_0)^2)] - (\alpha^{(\nu)}(v) + \sigma \beta^{(\nu)}(v)) \cdot \tilde{\Psi}_\nu 
	}
\end{align*}
with the functionals $\alpha^{(\nu)}, \beta^{(\nu)}$ as in Corollary~\ref{cor_alpha-beta} and for $\sigma = \pm 1$. We will prove the local bifurcation result for each map $G_\sigma$ but, in order to find global bifurcation, we require $G_+$ resp. $G_-$ in order to verify the change of the index at $(0, 0, b)$ with $b \geq 0$ resp. $b \leq 0$. 
Part (b) of that Corollary states that $G_\sigma(w, v, b) = 0$ if and only if the point $(u_0 + w, v, b)$ solves the nonlinear Helmholtz system~\eqref{eq_system} with asymptotics~\eqref{eq_asymptotic}, $\omega = 0$. In particular, $G_+(w, v, b) = 0$ if and only if $G_-(w, v, b) = 0$. Due to~\eqref{eq_nondegen-tau0}, $(\varphi, \psi) \in \ker DG(0, 0, b)$ if and only if 
\begin{align*}
	\varphi \equiv 0, 
	\quad
	- \Delta \psi - \nu \psi = b \: u_0^2 \, \psi, 
	\: \: \psi(x) = c_\psi \frac{\sin(|x|\sqrt{\nu})}{|x|} + O\left(\frac{1}{|x|^2}\right)
\end{align*}
for some $c_\psi \in \R \setminus \{ 0 \}$.
Propositions~\ref{prop_pruefer}~and~\ref{prop_asymptoticphase} thus tell us that a nontrivial solution $\psi = \psi_k \in X_1$ exists if and only if the asymptotic phase satisfies $\omega_\nu(b \, u_0^2) \in \pi \Z$, equivalently $b = b_k(0, \nu, u_0^2) =: b_k(0)$ for some $k \in \Z$, and that the eigenspace is one-dimensional. 
Thus solutions of~\eqref{eq_system},~\eqref{eq_asymptotic} for $\omega = 0$, bifurcate from a point $(u_0, 0, b) \in \mathcal{T}_{u_0}$ only if $b = b_k(0)$ for some $k \in \Z$. 
We show that it happens indeed by checking the assumptions of the Crandall-Rabinowitz Theorem.

\step{2}{Local Bifurcation.}

First, we infer that, since $G_\sigma(\,\cdot\, , b)$ is a compact perturbation of the identity, $DG_\sigma(0, 0, b_k(0))$ is a $1-1-$Fredholm operator. It remains to check transversality. We compute
\begin{align*}
	\partial_b DG_\sigma(0, 0, b_k(0))[0, \psi_k] = - \svec{0}{\Psi_\nu \ast[u_0^2 \, \psi_k]}
\end{align*}
and assume by contradiction that there exist $\varphi, \psi \in X_1$ with $DG_\sigma(0, 0, b_k(0))[(\varphi, \psi)] = \partial_b DG_\sigma(0, 0, b_k(0))[(0, \psi_k)]$. Then $\varphi = 0$ due to~\eqref{eq_nondegen-tau0}, and 
\begin{align}\label{eq_proof-psi}
	\psi = b_k(0) \: \Psi_\nu \ast [u_0^2 \, \psi] + (\alpha^{(\nu)}(\psi) 
	+ \sigma \beta^{(\nu)}(\psi)) \cdot \tilde{\Psi}_\nu - \Psi_\nu \ast[u_0^2 \, \psi_k].
\end{align}
Thus, applying the functional $\alpha^{(\nu)}$ to~\eqref{eq_proof-psi}, we find
\begin{align*}
	\alpha^{(\nu)}(\psi) 
	&= b_k(0) \alpha^{(\nu)}(\Psi_\nu \ast [u_0^2 \, \psi]) +  (\alpha^{(\nu)}(\psi) 
	+ \sigma \beta^{(\nu)}(\psi)) \cdot \alpha^{(\nu)}(\tilde{\Psi}_\nu) - \alpha^{(\nu)}(\Psi_\nu \ast[u_0^2 \, \psi_k])
	\\
	&\overset{\eqref{eq_alpha-beta-coeff}}{=} \alpha^{(\nu)}(\psi) + \sigma \beta^{(\nu)}(\psi)
\end{align*}
and thus, since $\sigma \neq 0$, we conclude $\beta^{(\nu)}(\psi) = 0$. Equation~\eqref{eq_proof-psi} and $DG_\sigma(0, 0, b_k(0))[(0, \psi_k)] = (0, 0)$ further provide, due to Proposition~\ref{prop_cont-comp}~(c), the differential equations
\begin{align}\label{eq_proof-psikpsi}
	- \psi_k'' - \frac{2}{r} \psi_k' - \nu \psi_k = b_k(0) \: u_0^2(r) \, \psi_k,
	\quad
	- \psi'' - \frac{2}{r} \psi' - \nu \psi = b_k(0) \: u_0^2(r) \, \psi - u_0^2 \, \psi_k
\end{align}
for $r > 0$. Moreover, due to $\beta^{(\nu)}(\psi) = 0$ as shown above and $\beta^{(\nu)}(\psi_k) = 0$ which holds since $DG_\sigma(0, 0, b_k(0))[(0, \psi_k)] = (0, 0)$ as explained in Step 1, Proposition~\ref{prop_cont-comp}~(d) yields for the profiles
\begin{equation}\label{eq_proof-psikpsi-asymptotic}
\begin{split}
	&\psi_k(r) = c_k \cdot \frac{\sin(r\sqrt{\nu})}{r} + O\left(\frac{1}{r^2}\right),
	\quad
	\psi_k'(r) = c_k \sqrt{\nu} \cdot \frac{\cos(r\sqrt{\nu})}{r} + O\left(\frac{1}{r^2}\right),
	\\
	&\psi(r) = c \cdot \frac{\sin(r\sqrt{\nu})}{r} + O\left(\frac{1}{r^2}\right),
	\quad
	\psi'(r) = c \sqrt{\nu} \cdot \frac{\cos(r\sqrt{\nu})}{r} + O\left(\frac{1}{r^2}\right)
\end{split}
\end{equation}
as $r \to \infty$ for some $c \in \R$, $c_k \in \R \setminus \{ 0 \}$.
Multiplying the differential equations~\eqref{eq_proof-psikpsi} by $\psi$ resp. $\psi_k$ and taking the difference yields
$(r^2 (\psi_k \psi' - \psi \psi_k'))' = r^2 u_0^2(r) \psi_k^2$, hence for $R > 0$
\begin{align*}
	\int_0^R r^2 u_0^2(r) \psi_k^2(r) \: \mathrm{d}r
	=
	R^2 \left( \psi_k(R) \psi'(R) - \psi(R) \psi_k'(R) \right) 
	\overset{\eqref{eq_proof-psikpsi-asymptotic}}{=} 
	O\left(\frac{1}{R}\right).
\end{align*}
Thus letting $R \nearrow \infty$, we infer $u_0 \psi_k \equiv 0$, a contradiction. Hence 
\begin{align*}
	\partial_b DG_\sigma(0, 0, b_k(0))[(0, \psi_k)] \not \in \mathrm{ran} \, DG_\sigma(0, 0, b_k(0)),
\end{align*}
as asserted, proving transversality and thus bifurcation from a simple eigenvalue.

\step{3}{Global Bifurcation.}

Having already mentioned that $G_\sigma(\,\cdot\, , b)$ is a compact perturbation of the identity on $X_1 \times X_1$, Rabinowitz' Global Bifurcation Theorem applies and yields unbounded connected components $\mathcal{C}_k(0) \subseteq \overline{\mathcal{S}(0)}$ once we show that the index 
\begin{align*}
	\text{ind}_{X_1 \times X_1} \big( G_\sigma(\,\cdot\, , b), (0, 0) \big)
	&= 
	\text{ind}_{X_1 \times X_1} \big( DG_\sigma(0, 0, b), (0, 0) \big) 
	\\	
	&= 
	\text{ind}_{X_1}  \big( I - 3 \:  \mathbf{R}_\mu^{\tau_1}, 0 \big) \cdot 
	\text{ind}_{X_1} \big(I - K_b, 0 \big)
	\\
	\text{where we let } K_b &:= b \: \Psi_\nu \ast [u_0^2 \, \cdot \,] + (\alpha^{(\nu)} + \sigma \beta^{(\nu)}) \cdot \tilde{\Psi}_\nu
\end{align*}
changes sign at $b = b_k(0)$, $k \in \Z$ for a suitable choice of $\sigma \in \{ -1, +1 \}$. More precisely, we analyze bifurcation at $b_k(0) \geq 0$ using the map $G_+$ and at $b_k(0) < 0$ using $G_-$. 

In the following, we verify that $1$ is an algebraically simple eigenvalue of $K_{b_k(0)}$ and, moreover, the corresponding perturbed eigenvalue $\lambda_b \approx 1$ of $K_b$ for $b \approx b_k(0)$ has the property that $\lambda_b - 1$ changes sign as $b$ crosses $b_k(0)$. For the existence, algebraic simplicity and continuous dependence of the perturbed eigenvalue $\lambda_b$ on $b$ we refer to Kielhöfer's book~\cite{kielhoefer}, p.~203. Rabinowitz' Global Bifurcation Theorem in the version of~\cite{kielhoefer},~Theorem~II.3.3 then applies, and unboundedness of the component can then be proved as in Step 3 above.

\subsubsection*{Algebraic Simplicity.}
Here we adapt the proof of algebraic simplicity in Proposition~\ref{prop_spectrum} to the case $\omega = 0$ resp. to the map $G_\sigma$.
Let us assume that $\ker (I - K_{b_k(0)}) = \text{span} \{ w \}$ and $v \in \ker (I - K_{b_k(0)})^2 \setminus \ker (I - K_{b_k(0)})$. Then $v - K_{b_k(0)}v \in \ker (I - K_{b_k(0)})$, and without loss of generality, we have $v - K_{b_k(0)} v = w = K_{b_k(0)}w$, hence
\begin{equation}\label{eq_proof-wvConvolution}
\begin{split}
	w &= b_k(0) \: \Psi_\nu \ast [u_0^2 \, w] + (\alpha^{(\nu)}(w) + \sigma \beta^{(\nu)}(w)) \cdot \tilde{\Psi}_\nu,
	\\
	v &= b_k(0) \: \Psi_\nu \ast [u_0^2 \, (v + w)] + (\alpha^{(\nu)}(v+w) + \sigma \beta^{(\nu)}(v+w)) \cdot \tilde{\Psi}_\nu.
\end{split}
\end{equation}
Corollary~\ref{cor_alpha-beta} implies that the profiles satisfy
\begin{align}\label{eq_proof-wvODE}
	- w'' - \frac{2}{r} w' - \nu w = b_k(0) u_0^2 \: w,
	\quad
	- v'' - \frac{2}{r} v' - \nu v = b_k(0) u_0^2 \: (v + w)
	\qquad 
	\text{on } \R^3
\end{align}
as well as $\beta^{(\nu)}(w) = 0$. Applying $\alpha^{(\nu)}$ to the second identity in~\eqref{eq_proof-wvConvolution} and recalling the identities~\eqref{eq_alpha-beta-coeff}, we further have $\beta^{(\nu)}(v) = - \sigma \alpha^{(\nu)}(w)$. The asymptotic expansions in Proposition~\ref{prop_cont-comp}~(d) imply
\begin{equation}\label{eq_proof-wvAsymptotic}
\begin{split}
	w(r) &= \alpha^{(\nu)}(w) \: \frac{\sin(r \sqrt{\nu})}{r} + O\left(\frac{1}{r^2}\right),
	\\
	w'(r) &= \alpha^{(\nu)}(w) \sqrt{\nu} \: \frac{\cos(r \sqrt{\nu})}{r} + O\left(\frac{1}{r^2}\right),
	\\
	v(r) &= \alpha^{(\nu)}(v) \: \frac{\sin(r \sqrt{\nu})}{r} - \sigma \alpha^{(\nu)}(w) \: \frac{\cos(r \sqrt{\nu})}{r} 
	+ O\left(\frac{1}{r^2}\right),
	\\
	v'(r) &= \alpha^{(\nu)}(v) \sqrt{\nu} \: \frac{\cos(r \sqrt{\nu})}{r} + \sigma \alpha^{(\nu)}(w) \sqrt{\nu} \: \frac{\sin(r \sqrt{\nu})}{r} 
	+ O\left(\frac{1}{r^2}\right).
\end{split}
\end{equation}
For $r \geq 0$, we introduce $q(r) := r^2 \left(w(r) v'(r) - v(r) w'(r) \right)$. Using the differential equations in~\eqref{eq_proof-wvODE}, we find after a short calculation
\begin{align*}
	q'(r) = - r^2 b_k(0) u_0^2(r) w^2(r)
	\qquad (r > 0),
\end{align*}
hence $q$ is nondecreasing if $b_k(0)\leq 0$ and nonincreasing if $b_k(0) \geq 0$. On the other hand, $q(0) = 0$, and the asymptotic expansions~\eqref{eq_proof-wvAsymptotic} imply as $r \to \infty$
\begin{align*}
	q(r) = \sigma \cdot \alpha^{(\nu)}(w)^2 \sqrt{\nu} + O\left(\frac{1}{r}\right).
\end{align*}
Since $\alpha^{(\nu)}(w) \neq 0$ according to Proposition~\ref{prop_pruefer}, and since we choose $\sigma = +1$ to discuss $b_k(0) \geq 0$ and $\sigma = -1$ for $b_k(0) < 0$, this contradicts the monotonicity derived before. Hence $\ker (I - K_{b_k(0)}) = \ker (I - K_{b_k(0)})^2$, as claimed.

\subsubsection*{Perturbation of the eigenvalue.}
We now discuss the perturbation of the simple eigenvalue $\lambda_{b_k(0)} = 1$ of $K_{b_k(0)}$. 
Throughout the following lines, we consider a perturbed value $b \approx b_k(0), b \neq b_k(0)$ and the corresponding eigenpair $\lambda_b \approx 1$ and $w_b \in X_1$ with $K_b w_b = \lambda_b w_b$. It satisfies
\begin{align}\label{eq_proof-crossing}
	- \Delta w_b - \nu w_b = \frac{b}{\lambda_b} u_0^2(x) \: w_b \quad \text{on } \R^3,
	\qquad
	(\lambda_b - 1) \alpha^{(\nu)}(w_b) = \sigma \beta^{(\nu)}(w_b).
\end{align}
This immediately implies that $\lambda_b \neq 1$ since  $b \approx b_k(0), b \neq b_k(0)$ and hence $\beta^{(\nu)}(w_b) \neq 0$ due to the strict monotonicity of the asymptotic phase, see Proposition~\ref{prop_asymptoticphase}. 
We recall that the asymptotic phase of $w_b$ satisfies 
\begin{align}\label{eq_proof-crossingAsympPh}
	\omega_\nu(b_k(0) u_0^2) \in \pi \Z
	\qquad\text{and}\qquad
	\frac{\alpha^{(\nu)}(w_b)}{\beta^{(\nu)}(w_b)} = \cot(\omega_\nu(b \lambda_b^{-1} u_0^2))
	\quad (b \neq b_k(0), b \approx b_k(0)).
\end{align}
The latter is a consequence of various representation results applied to~\eqref{eq_proof-crossing},
\begin{align*}
	w_b(x) &\overset{\text{Cor. }\ref{cor_alpha-beta}, \eqref{eq_alpha-beta-coeff}}{=} 
	\beta^{(\nu)}(w_b) \cdot \Psi_\nu(x) + \alpha^{(\nu)}(w_b) \cdot \tilde{\Psi}_\nu (x) + O\left(\frac{1}{|x|^2}\right),
	\\
	w_b(x) &\overset{\text{Cor. }\ref{cor_asymptotic}}{=} 
	\left( \Psi_\nu + \cot(\omega_\nu(b \lambda_b^{-1} u_0^2)) \cdot \tilde{\Psi}_\nu \right)
	\ast \left[ \frac{b}{\lambda_b} u_0^2(x) \: w_b \right]
	\\
	&\overset{\text{Prop. }\ref{prop_cont-comp}}{=}
	4 \pi \sqrt{\frac{\pi}{2}} \cdot \frac{b}{\lambda_b} \widehat{u_0^2 w_b}(\sqrt{\nu}) \cdot
	\left( \Psi_\nu(x) + \cot(\omega_\nu(b \lambda_b^{-1} u_0^2)) \cdot \tilde{\Psi}_\nu(x) \right) + O\left(\frac{1}{|x|^2}\right).
\end{align*}
We now discuss the values $b_k(0) \geq 0$, i.e. $\sigma = +1$.
In case $b > b_k(0)$ we show that $\lambda_b > 1$. Assuming $\lambda_b < 1$, we infer from the second identity in~\eqref{eq_proof-crossing} that $\text{sgn }\alpha^{(\nu)}(w_b) \neq \text{sgn } \beta^{(\nu)}(w_b)$ and thus $\omega_\nu(b \lambda_b^{-1} u_0^2) \in \left( -\frac{\pi}{2}, 0 \right) + \pi \Z$ due to~\eqref{eq_proof-crossingAsympPh}. But since $b \lambda_b^{-1} > b_k(0)$, the monotonicity stated in Proposition~\ref{prop_asymptoticphase} implies $\omega_\nu(b \lambda_b^{-1} u_0^2) \in \omega_\nu(b_k(0) u_0^2) + \left(0, \frac{\pi}{2} \right) \subseteq \left(0, \frac{\pi}{2} \right) + \pi\Z $, a contradiction. Hence, as claimed, $\lambda_b > 1$.
In the same way, for $b < b_k(0)$, we can show that $\lambda_b < 1$. 
Following the same strategy, we see for $b_k(0) < 0$, i.e. $\sigma = -1$, that $b > b_k(0)$ implies $\lambda_b < 1$ and $b < b_k(0)$ implies $\lambda_b > 1$.

We have thus proved that, as $b$ crosses $b_k(0)$, the perturbed eigenvalue $\lambda_b$ crosses $\lambda_{b_k(0)} = 1$ and hence the sign of the Leray-Schauder index 
$\text{ind}_{X_1 \times X_1} \big( G_{\sigma}(\,\cdot\, , b), (0, 0) \big)$ changes at $b = b_k(0)$ for all $k \in \Z$ and for $\sigma \in \{ \pm 1 \}$ chosen as above.

\subsection*{The case $\tau_1 = 0$.}

This is covered by redefining the first components of $F$ resp. $G_\sigma$, 
\begin{align*}
	&(w, v, b) \mapsto &&w - \Psi_\mu \ast [w^3 + 3 u_0 w^2 + 3 u_0^2 w + b \: (u_0 + w) v^2] 
	- \left [ \alpha^{(\mu)} (w) + \beta^{(\mu)}(w) \right] \cdot \tilde{\Psi}_\mu 
	\\ & &&=: h(w, v, b)
	\\
	&\text{instead of }
	\\
	&(w, v, b) \mapsto &&w - \mathcal{R}_\mu^{\tau_1} (w^3 + 3 u_0 w^2 + 3 u_0^2 w + b \: (u_0 + w) v^2).
\end{align*}
This redefinition is similar to the changes in the second component when passing from $F$ resp. parameters $\omega \in (0, \pi)$ to $G_\sigma$ suitable for $\omega = 0$. Then still, $F$ resp. $G_\sigma$ is a compact perturbation of the identity. The redefinition ensures that, due to Part~(b) of Corollary~\ref{cor_alpha-beta}, $h(w,v,b) = 0$ implies that 
\begin{align*}
	&- \Delta w - \mu w = (u_0 + w)^3 - u_0^3 + b \: (u_0 + w) v^2 \quad \text{on } \R^3,
	\\
	&w(x) = c_w \frac{\sin(|x|\sqrt{\mu})}{|x|} + O\left(\frac{1}{|x|^2}\right)
	\quad \text{as } |x| \to \infty
\end{align*}
for some $c_w \in \R$, i.e. that the $w$ component of zeros of $F$ resp. $G_\sigma$ satisfies~\eqref{eq_system},~\eqref{eq_asymptotic}. Similarly, for $\varphi, \psi \in X_1$ with $Dh(0, 0, b)[(\varphi, \psi)] = (0, 0)$, we obtain
\begin{align*}
	&- \Delta \varphi - \mu \varphi = 3 u_0^2(x) \varphi  \quad \text{on } \R^3,
	\\
	&\varphi(x) = c_\varphi \frac{\sin(|x|\sqrt{\mu})}{|x|} + O\left(\frac{1}{|x|^2}\right)
	\quad \text{as } |x| \to \infty
\end{align*}
for some $c_\varphi \in \R \setminus \{ 0 \}$, which implies $\varphi = 0$ thanks to the nondegeneracy condition~\eqref{eq_nondegen-tau0}. These are the only properties of the first component of $F$ resp. $G_\sigma$ required in the proof for $\tau_1 \neq 0$, which we can now again follow line by line, closing the proof of Theorem~\ref{THEtheorem}. \hfill $\square$

\subsection*{Proof of Remark~\ref{THEremark}}

\begin{itemize}
\item[(a)]
The Steps~1 of the proof above in fact show that solutions of~\eqref{eq_system},~\eqref{eq_asymptotic} bifurcate from $(u_0, 0, b) \in \mathcal{T}_{u_0}$ only if $b = b_k(\omega)$ for $k \in \Z$; Steps~2 show that this condition is also sufficient.
\item[(b)]
By Proposition~\ref{prop_asymptoticphase}, the map $q: \R \to \R, \: q(b) := \omega_\nu (b \, u_0^2)$ is strictly increasing and onto. Having chosen $b_k(\omega) = q^{-1}( \omega + k \pi)$ for $\omega \in [0, \pi), k \in \Z$, we infer strict monotonicity and surjectivity of the map $\R \to \R, \:	\omega + k \pi \mapsto b_k(\omega)$.
\item[(c)]
In  Steps~2 we have seen that in a neighborhood of the bifurcation point $(u_0, 0, b_k(\omega))$, the continuum  $\mathcal{C}_k(\omega)$ contains only fully nontrivial solutions apart from $(u_0, 0, b_k(\omega))$ itself. 
Following the argumentation which was given in detail for the case $\omega \in (0, \pi)$ at the end of Step~3 (and also holds for $\omega = 0$), we infer for all $(u, v, b) \in \mathcal{C}_k(\omega)$ from this neighborhood that the asymptotic phase of $v$ satisfies $\omega_\nu(v^2 + b u^2) = \omega + k \pi$. More generally, $\omega_\nu(v^2 + b u^2) = \omega + k \pi$ holds on every connected subset of $\mathcal{C}_k(\omega)$ containing $(u_0, 0, b_k(\omega))$ but no other semitrivial solution with $v = 0$.
\item[(d)]
Assuming $\tau_1 \neq \sigma_0$, any solution $(u, v, b)$ of~\eqref{eq_system},~\eqref{eq_asymptotic} satisfies 
\begin{align*}
	u(x) = u_0(x) + w(x)
	= c_0 \: \frac{\sin(|x| \sqrt{\mu} + \sigma_0)}{|x|} + c_w \: \frac{\sin(|x| \sqrt{\mu} + \tau_1)}{|x|} + O\left(\frac{1}{|x|^2}\right) 
\end{align*} 
as $|x| \to \infty$ for some $c_w \in \R, c_0 \in \R \setminus \{ 0 \}$ by Proposition~\ref{prop_u0-sigma0-tau0} and by the asymptotic condition~\eqref{eq_asymptotic}. Hence, comparing the leading-order terms, we see that $u \neq 0$.
Moreover, as recalled in~(b), the values $b_k(\omega) = q^{-1}( \omega + k \pi)$ do not change when choosing another asymptotic parameter $\tau_1$ in~\eqref{eq_asymptotic}.
\hfill $\square$
\end{itemize}

\section{Proof of Theorem~\ref{THEtheorem_diagonal}}\label{sect_prf2}

We now prove the occurence of bifurcations from the diagonal solution family
\begin{align*}
	\mathfrak{T}_{u_0} := \left\{
		(u_b, u_b, b) \: \bigg| \: u_b = \frac{1}{\sqrt{1+b}} \: u_0, \: b > -1
	\right\}
\end{align*}
as stated in Theorem~\ref{THEtheorem_diagonal}. To this end we first rewrite
the system~\eqref{eq_system} in an equivalent but more convenient way. Looking for solutions $(u,v,b) \in X_1\times X_1\times\R \setminus \mathfrak T$, we introduce the functions $w_1, w_2 \in X_1$ via
\begin{align*}
	u =: u_b+w_1-w_2, \qquad v =:u_b +w_1+w_2.
\end{align*} 
A few computations then yield that bifurcation at the point $(u_b,u_b,b)$ occurs if and only if we have bifurcation from the trivial solution of the nonlinear Helmholtz system
\begin{align}\label{eq_system-w12}
	\begin{cases}
  -\Delta w_1 - \mu w_1 = (1+b)\big( (w_1+u_b)^3-u_b^3 \big) + (3-b)(w_1+u_b)w_2^2
  & \text{on } \R^3, \\
  -\Delta w_2 - \mu w_2 = (1+b) w_2^3 + (3-b)(w_1+u_b)^2w_2
  & \text{on } \R^3,
  \end{cases}
\end{align} 
and the asymptotic conditions~\eqref{eq_asymptotic_diagonal} are equivalent to
\begin{align}\label{eq_asymptotic-w12}
	w_1(x) = c_1 \: \frac{\sin(|x| \sqrt{\mu} + \tau_1)}{|x|} + O\left(\frac{1}{|x|^2}\right),
	\quad
	w_2(x) = c_2 \: \frac{\sin(|x| \sqrt{\mu} + \omega)}{|x|} + O\left(\frac{1}{|x|^2}\right)
\end{align}
as $|x| \to \infty$ for some $c_1, c_2 \in \R$.
As in the proof of Theorem~\ref{THEtheorem}, the functional analytical setting in the special cases $\omega=0$ or $\tau_1=0$ is different from the general one since a substitute for the operators $\mathcal{R}_\mu^{\tau_1}, \mathcal{R}_\mu^\omega$ has to be found, see the definition of $G_\sigma$ in the proof of Theorem~\ref{THEtheorem}.
In order to keep the presentation short we only discuss the case $\tau_1, \omega \in (0, \pi)$ and refer to the proof of Theorem~\ref{THEtheorem} for the modifications in the remaining cases. So we introduce the map 
$ F: X_1\times X_1\times (-1, \infty) \to X_1\times X_1$ via
\begin{align*}
	F(w_1,w_2,b) := 
   \svec{w_1}{w_2} - 
  	\svec{\mathcal{R}_\mu^{\tau_1} 
  	\Big( (1+b)\big( (w_1+u_b)^3-u_b^3 \big) + (3-b)(w_1+u_b)w_2^2  \Big)}
  	{\mathcal{R}_\mu^{\omega}\Big( (1+b) w_2^3 + (3-b)(w_1+u_b)^2w_2  \Big)}.
\end{align*}
Then $F(0,0,b)=0$ for all $b>-1$, $F(\,\cdot\, , b)$ is a compact perturbation of the identity on $X_1 \times X_1$ and it remains to find bifurcation points for this equation.
First we identify candidates for bifurcation points, so we first compute those $b\in (-1, \infty)$ where $\ker DF(0,0,b)$ is nontrivial. Using 
\begin{align*}
	DF(0,0,b)[(\phi_1,\phi_2)]
  	= \svec{\phi_1}{\phi_2} - \svec{\mathcal{R}_\mu^{\tau_1}\big( 3(1+b)u_b^2\phi_1 \big)}
  	{\mathcal{R}_\mu^{\omega}\big( (3-b)u_b^2\phi_2  \big)} 
   = \svec{\phi_1}{\phi_2} - \svec{3 \mathbf{R}_\mu^{\tau_1} \phi_1}
   {\frac{3-b}{1+b} \cdot \mathbf{R}_\mu^{\omega} \phi_2},
\end{align*}
we get that nontrivial kernels occur exactly if $\frac{3-b}{1+b} = b_k(\omega)$ for some $k\in\Z$, cf. Steps 1 in the previous proof. For the analogous result in the Schrödinger case we refer to Lemma~3.1~\cite{Bartsch2}.  
So we find   
\begin{align*}
  \ker DF(0,0,b) = \text{span }\Big\{\svec{0}{\psi_k} \Big\}
  \quad\text{provided } b= \frac{3-b_k(\omega)}{b_k(\omega)+1} > -1
\end{align*}
for some $\psi_k\in X_1\sm\{0\}$. Notice that the first component of the kernel element is zero by choice of $\tau_1$, see Proposition~\ref{prop_u0-sigma0-tau0}. Using the algebraic simplicity of $\psi_k$ proved in Proposition~\ref{prop_spectrum} we infer exactly as in the proof of Theorem~\ref{THEtheorem} that the transversality condition holds and that the Leray-Schauder index changes at the bifurcation point. So, choosing as $(\tilde{b}_k(\omega))_{k \in \Z}$ the subsequence of $(b_k(\omega))_{k \in \Z}$ with $\frac{3-b_k(\omega)}{b_k(\omega)+1} > -1$, the Crandall-Rabinowitz Bifurcation Theorem and Rabinowitz' Global Bifurcation Theorem yield statements (ii) and (i) of the Theorem, respectively. 

Unboundedness of the components can also be deduced as before. Indeed, assuming that $\mathfrak{C}_k(\omega)$ is unbounded, it returns to $\mathfrak{T}_{u_0}$ at some point $(u_{b^\ast}, u_{b^\ast}, b^\ast) \neq (u_{b_k(\omega)}, u_{b_k(\omega)}, b_k(\omega))$ by Rabinowitz' Theorem. We then infer that the phase $\omega_\nu((1 + b) w_2^2 + (3 - b)(w_1 + u_b)^2)$ cannot be constant along $\mathfrak{C}_k(\omega)$. Due to Proposition~\ref{prop_pruefer} applied to $w_2$ in~\eqref{eq_system-w12}, this requires the existence of some element $(u, v, b) \in \mathfrak{C}_k(\omega)$ with $w_2 = \frac{1}{2} (v - u) = 0$, and hence the associated unbounded diagonal family belongs to $\mathfrak{C}_k(\omega)$.
\hfill $\square$

\section{Proofs of the Results in Section~\ref{sect_scalar}}\label{sect_prfs}

Before proving Proposition~\ref{prop_cont-comp}, we establish two helpful explicit results. 
The first one provides a formula for the Fourier transform of radially symmetric functions, to be found e.g. in~\cite{Stein},~p.~430.
\begin{lem}\label{lem_fourier}
For $f \in X_3$ and $x \in \R^3 \setminus \{ 0 \}$, we have
\begin{align*}
	\hat{f}(x) = \sqrt{\frac{2}{\pi}} \int_0^\infty f(r) \cdot \frac{\sin(|x|r)}{|x|r} \: r^2 \: \mathrm{d}r.
\end{align*}
\end{lem}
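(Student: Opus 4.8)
The plan is to reduce the three-dimensional Fourier transform of a radial function to a one-dimensional integral by exploiting the rotational symmetry. First I would recall the normalization convention $\hat f(\xi) = (2\pi)^{-3/2}\int_{\R^3} f(x)\, e^{-\i x\cdot\xi}\dx$; writing $f(x) = f(|x|)$ for the radial profile, the integrand depends on $x$ only through $|x|$ and the angle between $x$ and $\xi$. Since $|\hat f(\xi)|$ depends only on $|\xi|$ by the $O(3)$-equivariance of the Fourier transform, I may rotate so that $\xi = |\xi| e_3$ points along the third axis, reducing the task to evaluating $\int_{\R^3} f(|x|)\, e^{-\i |\xi| x_3}\dx$.

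Next I would pass to spherical coordinates $x = (r\sin\theta\cos\varphi, r\sin\theta\sin\varphi, r\cos\theta)$ with $r\in(0,\infty)$, $\theta\in[0,\pi]$, $\varphi\in[0,2\pi)$ and volume element $r^2\sin\theta\, \mathrm{d}r\,\mathrm{d}\theta\,\mathrm{d}\varphi$. The $\varphi$-integral contributes a factor $2\pi$, and the $\theta$-integral becomes $\int_0^\pi e^{-\i|\xi| r\cos\theta}\sin\theta\,\mathrm{d}\theta$, which after the substitution $t = \cos\theta$ equals $\int_{-1}^{1} e^{-\i|\xi| r t}\,\mathrm{d}t = \frac{2\sin(|\xi| r)}{|\xi| r}$. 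Collecting the constants $(2\pi)^{-3/2}\cdot 2\pi \cdot 2 = \sqrt{2/\pi}$ yields exactly
\begin{align*}
	\hat f(\xi) = \sqrt{\frac{2}{\pi}} \int_0^\infty f(r)\,\frac{\sin(|\xi| r)}{|\xi| r}\, r^2\,\mathrm{d}r.
\end{align*}

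The only point requiring a little care is the justification of the manipulations for $f\in X_3$ rather than for Schwartz functions: by the embedding \eqref{eq_embed} we have $X_3\hookrightarrow\L{q}$ for all $q\in(1,\infty]$, in particular $f\in L^1(\R^3)$ (since $(1+r^2)^{-3/2} r^2$ is integrable on $(0,\infty)$), so $\hat f$ is given by the absolutely convergent integral above and Fubini's theorem legitimizes exchanging the radial and angular integrations. I expect the main (and essentially only) obstacle to be bookkeeping of the normalization constant; the structure of the computation is entirely classical and can alternatively be cited verbatim from \cite{Stein}, p.~430, as the statement indicates.
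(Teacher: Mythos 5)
The angular computation is the classical one and your constant $(2\pi)^{-3/2}\cdot 2\pi\cdot 2=\sqrt{2/\pi}$ is correct; the paper itself offers no proof and simply cites \cite{Stein}, p.~430, so writing out the calculation is reasonable. However, your justification of the manipulations for $f\in X_3$ contains a genuine error: $X_3$ does \emph{not} embed into $L^1(\R^3)$. From $|f(r)|\le\norm{f}_{X_3}(1+r^2)^{-3/2}$ one gets $\int_{\R^3}|f|\,\mathrm{d}y = 4\pi\int_0^\infty |f(r)|\,r^2\,\mathrm{d}r$, and $r^2(1+r^2)^{-3/2}$ behaves like $1/r$ as $r\to\infty$, so this integral diverges in general --- this is precisely why the embedding \eqref{eq_embed} is stated only for $q>1$. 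Consequently $\hat f$ is not given by an absolutely convergent integral over $\R^3$, and neither the pointwise formula $\hat f(\xi)=(2\pi)^{-3/2}\int f(y)e^{-\i y\cdot\xi}\,\mathrm{d}y$ nor the appeal to Fubini is available as stated.

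The lemma is nevertheless true, but it requires a truncation argument. First observe that the one-dimensional integral on the right-hand side \emph{does} converge absolutely for each $x\neq 0$: using $\left|\frac{\sin(|x|r)}{|x|r}\right|\le\min\left\{1,\frac{1}{|x|r}\right\}$, the integrand is $O(r^2)$ near $0$ and $O\left(\frac{1}{|x|\,r^2}\right)$ at infinity. Now take $f_R:=f\,\mathds{1}_{B_R(0)}\in L^1(\R^3)$; your spherical-coordinates computation applies verbatim to $f_R$ and gives $\hat{f_R}(x)=\sqrt{2/\pi}\int_0^R f(r)\frac{\sin(|x|r)}{|x|r}r^2\,\mathrm{d}r$. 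As $R\to\infty$ these truncated integrals converge locally uniformly on $\R^3\setminus\{0\}$ to the claimed expression (by the absolute convergence just noted), while $f_R\to f$ in $\L{q}$ for any $q\in(1,2]$, so $\hat{f_R}\to\hat f$ in $L^{q'}(\R^3)$ by Hausdorff--Young and hence pointwise a.e.\ along a subsequence. This identifies the continuous representative of $\hat f$ on $\R^3\setminus\{0\}$ with the stated formula, which is the sense in which the lemma is used in the paper (only the value at the single radius $|x|=\sqrt{\lambda}$ ever enters). With this repair your argument is complete.
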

We denote by
\begin{align}\label{eq_fundamental-complex}
	\Phi_\lambda(x) := \frac{\mathrm{e}^{\i \sqrt{\lambda} |x|}}{4 \pi |x|}
	\overset{\eqref{eq_fundamental}}{=} \Psi_\lambda(x) + \i \: \tilde{\Psi}_\lambda(x)
	\qquad
	\text{for } \lambda > 0, x \in \R^3 \setminus \{ 0 \}
\end{align} 
a (complex) fundamental solution of the Helmholtz equation $-\Delta \phi - \lambda \phi = 0$ on $\R^3$, and provide the following pointwise formula for convolutions with kernel $\Phi_\lambda$.
\begin{lem}\label{lem_convol}
For $f \in X_3$ and $x \in \R^3 \setminus \{ 0 \}$, we have
\begin{align*}
	(\Phi_\lambda \ast f)(x) 
 	&= \frac{\mathrm{e}^{\i \sqrt{\lambda} |x|}}{ |x| } \cdot  \int_0^{|x|}	\frac{\sin(\sqrt{\lambda} r) }{\sqrt{\lambda} r} 	
 	\cdot f(r) \: r^2 \:  \mathrm{d}r
	+ \frac{\sin(\sqrt{\lambda} |x|)}{|x|} \cdot
	 \int_{|x|}^\infty \frac{\mathrm{e}^{\i \sqrt{\lambda} r}}{\sqrt{\lambda} r} \cdot f(r) \: r^2 	\:  \mathrm{d}r
	\\
	&= \sqrt{\frac{\pi}{2}} \: \hat{f}(\sqrt{\lambda}) 
 	\cdot \frac{\mathrm{e}^{\i \sqrt{\lambda} |x|}}{|x|}
 	+ \int_{|x|}^\infty f(r) \cdot  
 	\frac{\mathrm{e}^{\i\sqrt{\lambda}  r} \sin(\sqrt{\lambda} |x|) - \mathrm{e}^{\i \sqrt{\lambda} |x|} \sin(\sqrt{\lambda} r)}
	{\sqrt{\lambda} r|x|}  \: r^2 \mathrm{d}r.
\end{align*}
\end{lem}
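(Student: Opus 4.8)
The plan is to start from the defining convolution integral $(\Phi_\lambda \ast f)(x) = \int_{\R^3} \Phi_\lambda(x-y) f(y)\,\mathrm dy$ and reduce it to a one-dimensional integral using the radial symmetry of $f$ and the explicit kernel $\Phi_\lambda(z) = \mathrm e^{\mathrm i\sqrt\lambda|z|}/(4\pi|z|)$. Writing $r = |x|$, $s = |y|$ and integrating out the angular variable, the key elementary fact is that for fixed $r,s>0$ one has $\frac{1}{4\pi}\int_{S^2} \frac{\mathrm e^{\mathrm i\sqrt\lambda|x - s\sigma|}}{|x - s\sigma|}\,\mathrm d\sigma = \frac{1}{\sqrt\lambda\, r s}\sin(\sqrt\lambda\min\{r,s\})\,\mathrm e^{\mathrm i\sqrt\lambda\max\{r,s\}}$, which follows by substituting $t = |x - s\sigma|$ so that $t\,\mathrm dt = rs\,\mathrm d(\cos\theta)$ and $t$ ranges over $[|r-s|, r+s]$, leaving $\frac{1}{2rs}\int_{|r-s|}^{r+s} \mathrm e^{\mathrm i\sqrt\lambda t}\,\mathrm dt$, an elementary integral that evaluates to the claimed product of a sine and an exponential. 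Multiplying by $f(s)\,s^2$ and integrating over $s \in (0,\infty)$, split at $s = r$: for $s < r$ the minimum is $s$ and the maximum is $r$, giving the first term $\frac{\mathrm e^{\mathrm i\sqrt\lambda r}}{r}\int_0^r \frac{\sin(\sqrt\lambda s)}{\sqrt\lambda s} f(s) s^2\,\mathrm ds$; for $s > r$ it is the other way around, giving $\frac{\sin(\sqrt\lambda r)}{r}\int_r^\infty \frac{\mathrm e^{\mathrm i\sqrt\lambda s}}{\sqrt\lambda s} f(s) s^2\,\mathrm ds$. This is exactly the first displayed identity.

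For the second displayed identity I would simply rewrite the first one by adding and subtracting the tail of the first integral: extend $\int_0^r$ to $\int_0^\infty$ and correct by $-\int_r^\infty$. The term $\frac{\mathrm e^{\mathrm i\sqrt\lambda r}}{r}\int_0^\infty \frac{\sin(\sqrt\lambda s)}{\sqrt\lambda s} f(s) s^2\,\mathrm ds$ is, by Lemma~\ref{lem_fourier} applied with $|x| = \sqrt\lambda$, precisely $\sqrt{\frac\pi2}\,\hat f(\sqrt\lambda)\,\frac{\mathrm e^{\mathrm i\sqrt\lambda r}}{r}$. Collecting the two leftover tail integrals over $(r,\infty)$ into a single integrand yields the combination $f(s)\cdot\frac{\mathrm e^{\mathrm i\sqrt\lambda s}\sin(\sqrt\lambda r) - \mathrm e^{\mathrm i\sqrt\lambda r}\sin(\sqrt\lambda s)}{\sqrt\lambda\, s\, r}\,s^2$, which is the asserted formula.

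The only genuine technical point — the rest is bookkeeping — is justifying the manipulations: that the convolution integral converges absolutely and that Fubini applies when exchanging the angular and radial integrations. Here one uses $f \in X_3$, so $|f(s)| \le \|f\|_{X_3}(1+s^2)^{-3/2}$, together with the bound $|\Phi_\lambda(z)| = (4\pi|z|)^{-1}$; the embedding $X_3 \hookrightarrow L^q_{\mathrm{rad}}(\R^3)$ from~\eqref{eq_embed} for suitable $q$ (or a direct estimate splitting $\R^3$ into $|y| \le |x|/2$, $|x-y| \le |x|/2$, and the remaining region) shows the integral is finite for every $x \ne 0$, and nonnegativity of the integrand after taking absolute values legitimizes Fubini. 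I expect this integrability check to be the main obstacle only in the sense of being the one place where the hypothesis $f\in X_3$ is actually used; the core computation is the spherical-average identity above, which is a standard exercise.
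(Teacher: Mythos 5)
Your proposal is correct and follows essentially the same route as the paper: both reduce the convolution to a one-dimensional integral by performing the angular integration explicitly (your substitution $t=|x-s\sigma|$ is the same computation as the paper's antiderivative in $\vartheta$, yielding $\frac{1}{2rs}\int_{|r-s|}^{r+s}\mathrm e^{\i\sqrt\lambda t}\,\mathrm dt$ and hence the $\min/\max$ formula), split at $s=r$, and then obtain the second identity by extending the first integral to $(0,\infty)$ and invoking Lemma~\ref{lem_fourier}. Your explicit attention to absolute convergence and Fubini is if anything slightly more careful than the paper, which only notes local integrability of $\Phi_\lambda$.
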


\begin{proof}
Let $f \in X_3$ and $x \neq 0$. Observing that the singular function $\Phi_\lambda$ is locally integrable in $\R^3$, the convolution is well-defined and we compute using spherical coordinates with respect to the $x$ direction
\begin{align*}
	(\Phi_\lambda \ast f)(x) 
	&= \int_{\R^3} \frac{\mathrm{e}^{\i \sqrt{\lambda} |x-y|}}{4 \pi |x-y|} \: f(y) \: \mathrm{d}y
	\\
	&= \int_0^\infty \int_{0}^\pi  \int_0^{2\pi}   
	\frac{\mathrm{e}^{\i \sqrt{\lambda} \sqrt{|x|^2 + r^2 - 2 |x| r \cos(\vartheta)}}}{4 \pi \sqrt{|x|^2 + r^2 - 2 |x| r \cos(\vartheta)}} 
	\: f(r) \: r^2 \sin(\vartheta)
	\: \mathrm{d}\varphi \mathrm{d}\vartheta \mathrm{d}r
	\\
	&= \int_0^\infty \left[
	\frac{\mathrm{e}^{\i \sqrt{\lambda} \sqrt{|x|^2 + r^2 - 2 |x| r \cos(\vartheta)}}}{2\i \sqrt{\lambda}  |x| r}
	\right]_{\vartheta = 0}^{\vartheta = \pi}  \: f(r) \: r^2 
	\:  \mathrm{d}r
	\\
	&= \int_0^\infty 
	\frac{\mathrm{e}^{\i \sqrt{\lambda} ||x|+r|} - \mathrm{e}^{\i \sqrt{\lambda} ||x|- r|}}{2\i \sqrt{\lambda}  |x| r}
	\cdot f(r) \: r^2 
	\:  \mathrm{d}r
	\\
	&= \int_0^{|x|}
	\frac{\mathrm{e}^{\i \sqrt{\lambda} |x|} \cdot \sin(\sqrt{\lambda} r) }{ \sqrt{\lambda} |x| r}
	\cdot f(r) \: r^2 
	\:  \mathrm{d}r
	+ \int_{|x|}^\infty 
	\frac{\mathrm{e}^{\i\sqrt{\lambda} r} \cdot \sin(\sqrt{\lambda} |x|)}{ \sqrt{\lambda} |x| r}
	\cdot f(r) \: r^2 
	\:  \mathrm{d}r.
\end{align*}
When combined with Lemma~\ref{lem_fourier}, this yields the asserted identity. 
\end{proof}

\subsection{Proof of Proposition~\ref{prop_cont-comp}}

We now prove, one by one, the assertions of Proposition~\ref{prop_cont-comp} for convolutions with $\Phi_\lambda$ in place of $\alpha \Psi_\lambda + \tilde{\alpha} \tilde{\Psi}_\lambda$.  The latter (real-valued) case can be deduced from the former using formula~\eqref{eq_fundamental-complex}. Unless stated otherwise, we extend norms defined on spaces of real-valued functions to complex-valued functions $g: \R^3 \to \C$ by considering the respective norm of $|g|: \R^3 \to \R$. 

(a) is a consequence of Theorem~2.1 in~\cite{EvequozWeth}.
The solution properties stated in (c) can be verified by direct computation. We prove continuity and compactness as stated in (b) and the asymptotic properties given in (d).

\step{1}{Proof of (b), first part. Continuity.}

Due to the continuous embedding $X_3 \subseteq \L{\frac{4}{3}}$, see~\eqref{eq_embed}, the convolution is well-defined for $f \in X_3$. Using Young's convolution inequality, we get
\begin{align*}
	|(\Phi_\lambda \ast  f) (x)|	
	&\leq
	| \left( \mathds{1}_{B_1(0)} \Phi_\lambda \right) \ast f (x) | + | \left(  \mathds{1}_{\R^3 \setminus B_1(0)} \Phi_\lambda \right) \ast f (x) |
	\\
	&\leq
	\int_{\R^3} | f(x-y) | \mathds{1}_{B_1(0)}(y) \frac{\mathrm{d}y}{4\pi |y|} 
	+ \norm{\left( \mathds{1}_{\R^3 \setminus B_1(0)} \Phi_\lambda \right) \ast f}_{L^\infty(\R^3)}
	\\
	&\leq
	\norm{f}_{L^\infty(\R^3)} \int_{B_1(0)} \frac{\mathrm{d}y}{4\pi |y|} 
	+ \norm{\mathds{1}_{\R^3 \setminus B_1(0)} \Phi_\lambda}_{L^4(\R^3)} \norm{f}_{L^\frac{4}{3}(\R^3)}
	\\
	&\leq
	\norm{f}_{L^\infty(\R^3)} \int_{B_1(0)} \frac{\mathrm{d}y}{4\pi |y|} 
	+ \norm{f}_{L^\frac{4}{3}(\R^3)}
	\left(\int_{\R^3 \setminus B_1(0)} \frac{\mathrm{d}y}{(4\pi |y|)^4}\right)^\frac{1}{4}
	\\
	&=
	\frac{1}{2} \norm{f}_{L^\infty(\R^3)}
	+ (4\pi)^{-\frac{3}{4}} \norm{f}_{L^\frac{4}{3}(\R^3)}
	\\
	&\leq
	\frac{1}{2} \norm{f}_{X_3}
	+ (4\pi)^{-\frac{3}{4}} \left( \int_{\R^3} \frac{\norm{f}_{X_3}^\frac{4}{3}}{(1 + |y|^2)^2} \: \mathrm{d}y \right)^\frac{3}{4}
	\\
	&\leq
	\left( \frac{1}{2} + \left( \int_0^\infty \frac{1}{1 + r^2} \: \mathrm{d}r \right)^\frac{3}{4} \right) \norm{f}_{X_3}
	\\
	&\leq
	\frac{1 + \pi}{2} \cdot \norm{f}_{X_3}.
\end{align*}
Next, by means of Lemma~\ref{lem_convol}, we estimate for $x \in \R^3 \setminus \{ 0 \}$ in the weighted norm
\begin{align*}
	||x| \cdot (\Phi_\lambda \ast  f) (x)|
	&= \left|
	\mathrm{e}^{\i \sqrt{\lambda} |x|} \cdot  \int_0^{|x|}	\frac{\sin(\sqrt{\lambda} r) }{\sqrt{\lambda} r} 	
 	\cdot f(r) \: r^2 \:  \mathrm{d}r
	+ \sin(\sqrt{\lambda} |x|) \cdot 
	\int_{|x|}^\infty \frac{\mathrm{e}^{\i\sqrt{\lambda} r}}{\sqrt{\lambda} r} \cdot f(r) \: r^2 	\:  \mathrm{d}r
	\right| 
	\\
	&\leq 
	\int_0^{|x|}	\frac{1}{\sqrt{\lambda} r} 	
 	\cdot \frac{\norm{f}_{X_3}}{(1+r^2)^\frac{3}{2}} \: r^2 \:  \mathrm{d}r
	+ 	\int_{|x|}^\infty \frac{1}{\sqrt{\lambda} r} \cdot \frac{\norm{f}_{X_3}}{(1+r^2)^\frac{3}{2}} \: r^2 	\:  \mathrm{d}r
	\\
	&\leq 
	\frac{1}{\sqrt{\lambda}} \cdot \norm{f}_{X_3} \cdot \int_0^\infty \frac{\mathrm{d}r}{1+r^2}
	\\
	&=
	\frac{\pi}{2 \sqrt{\lambda}} \cdot \norm{f}_{X_3}.
\end{align*}
Combining both estimates, we have shown that
\begin{align*}
	\norm{\Phi_\lambda \ast f}_{X_1}
	= \sup_{x \in \R^3} \sqrt{1 + |x|^2} |(\Phi_\lambda \ast f) (x)| 
	\leq \left[  \frac{1 + \pi}{2} + \frac{\pi}{2 \sqrt{\lambda}} \right] \cdot \norm{f}_{X_3}.
\end{align*}

\step{2}{Proof of (d), first part. Asymptotics of $w$.}

For $f \in X_3$ and $r = |x| > 0$, Lemma~\ref{lem_convol} implies
\begin{equation}\label{eq_R(eps)-3}
\begin{split}
	\left| (\Phi_\lambda \ast f)(r) - \sqrt{\frac{\pi}{2}} \: \hat{f}(\sqrt{\lambda}) 
 	\frac{\mathrm{e}^{\i \sqrt{\lambda} r}}{r} \right|
 	& \quad = \left| \int_{r}^\infty f(s)   
 	\frac{\mathrm{e}^{\i\sqrt{\lambda}  s} \sin(\sqrt{\lambda} r) - \mathrm{e}^{\i \sqrt{\lambda} r} \sin(\sqrt{\lambda} s)}
	{\sqrt{\lambda} sr}  \: s^2 \mathrm{d}s \right|
	\\
 	& \quad \leq  \int_{r}^\infty \frac{\norm{f}_{X_3}}{(1+s^2)^\frac{3}{2}} \cdot  
 	\frac{2}	{\sqrt{\lambda} r}  \: s \mathrm{d}s 
 	\\
 	& \quad \leq \norm{f}_{X_3} \cdot \frac{2}{\sqrt{\lambda} r}  \int_{r}^\infty \frac{1}{s^2} \: \mathrm{d}s 
 	\\
 	& \quad = \norm{f}_{X_3} \cdot \frac{2}{\sqrt{\lambda} r^2}.  
\end{split}
\end{equation}
As a consequence, we derive the formula stated for $\tilde{\Psi}_\lambda \ast f$. Due to (c), $\tilde{\Psi}_\lambda \ast f$ is a radial solution of the homogeneous Helmholtz equation $- \Delta w - \lambda w = 0$ on $\R^3$ and hence a scalar multiple of $\tilde{\Psi}_\lambda$ itself. The asymptotics in~\eqref{eq_R(eps)-3} justify the asserted constant.

\step{3}{Proof of (d), second part. Asymptotics of $w'$.}

We let $w := \Phi_\lambda \ast f$, denote by $w'$ the radial derivative, again identifying the radial functions with their profiles. 
We introduce the auxiliary function $z(r) := r \cdot w(r)$ ($r \geq 0$). Then by~(c) $z$ is twice continuously differentiable with
\begin{align*}
	- z'' - \lambda z = r \cdot f(r) \text{  on } (0, \infty),
	\qquad
	z(r) = \sqrt{\frac{\pi}{2}} \hat{f}(\sqrt{\lambda}) \cdot \mathrm{e}^{\i \sqrt{\lambda} r} + O\left( \frac{1}{r} \right) \text{  as } r \to \infty.
\end{align*}
Letting $\delta(r) := z(r) - \sqrt{\frac{\pi}{2}} \hat{f}(\sqrt{\lambda}) \cdot \mathrm{e}^{\i \sqrt{\lambda} r}$, we observe
\begin{align*}
	\delta(r) = O\left(\frac{1}{r}\right), 
	\quad
	\delta''(r) = - \lambda \delta(r) - r \cdot f(r) = O\left(\frac{1}{r}\right)
	\quad
	\text{as } r \to \infty.
\end{align*}
By Taylor's Theorem, for $r > 0$, we find $\tau(r) \in (0, 1)$ with
$\delta (r+1) = \delta (r) + \delta'(r) + \frac{1}{2} \delta''(r + \tau(r))$, whence also $\delta'(r) = O\left(\frac{1}{r}\right)$. In other words,
\begin{align*}
	z'(r) = \i \sqrt{\lambda} \,  \cdot \sqrt{\frac{\pi}{2}} \hat{f}(\sqrt{\lambda}) \cdot \mathrm{e}^{\i \sqrt{\lambda} r} 
	+ O\left(\frac{1}{r}\right) \quad
	\text{as } r \to \infty,
\end{align*}
and this proves the assertion since $w'(r) = \frac{z'(r)}{r} - \frac{z(r)}{r^2} = \frac{z'(r)}{r} + O\left(\frac{1}{r^2}\right)$.

\step{4}{Proof of (b), second part. Compactness.}

We consider a bounded sequence $(f_n)_n$ in the space $X_3$ and aim to prove convergence of a subsequence of $(u_n)_n$ where $u_n := \Phi_\lambda \ast f_n$ in the space $X_1$. 
First, due to the continuous embeddings into reflexive $L^p$ spaces stated in~\eqref{eq_embed}, we can pass to a subsequence with
\begin{align*}
	f_{n_k} \rightharpoonup f \text{ weakly in } L^4(\R^3) \cap L^\frac{4}{3}(\R^3),
	\quad
	u_{n_k} \rightharpoonup u \text{ weakly in } L^4(\R^3) 
\end{align*}
for some $f \in L_\text{rad}^4(\R^3) \cap L_\text{rad}^\frac{4}{3}(\R^3)$, $u \in L^4_\text{rad}(\R^3)$. 
Using Proposition~A.1 in~\cite{EvequozWeth} and a suitable diagonal sequence, we may assume that the subsequence satisfies $u_{n_k} \rightharpoonup u$ weakly in $W^{2,4}_\text{loc}(\R^3)$ and hence $u_{n_k} \to u$ strongly in $C^1_\text{loc}(\R^3)$, the latter thanks to the Rellich-Kondrachov Embedding Theorem~6.3 in\cite{Adams}. 
We will prove that $(u_{n_k})_k$ is a Cauchy sequence in $X_1$. 
We let $\varepsilon > 0$ and choose 
\begin{align}\label{eq_R(eps)}
	R := \max\left\{ \frac{8 \sqrt{2}}{\varepsilon \sqrt{\lambda}} \cdot \sup_{n\in\N} \norm{f_n}_{X_3}, 1\right\}.
\end{align}
Then since $u_{n_k} \to u$ in $C^1_\text{loc}(\R^3)$, in particular, $u_{n_k} \to u$ uniformly on $\bar B_R(0)$, and we can choose $k_1(\varepsilon) \in \N$ with 
\begin{align}\label{eq_R(eps)-4}
	\sup_{|x| \leq R} (1 + |x|^2)^\frac{1}{2} |u_{n_k}(x) - u_{n_l}(x)| < \varepsilon
	\qquad
	\text{for all } k, l \geq k_1(\varepsilon).
\end{align}
We next observe that Lemma~\ref{lem_fourier} implies $\hat{f}_{n_k}(\sqrt{\lambda}) \to \hat{f}(\sqrt{\lambda})$ as $k \to \infty$ since $f_{n_k} \rightharpoonup f$ weakly in $L^\frac{4}{3}(\R^3)$. Thus, we find $k_2(\varepsilon) \in \N$ with the property that
\begin{align}\label{eq_R(eps)-2}
	\sqrt{\pi} |\hat{f}_{n_k}(\sqrt{\lambda}) - \hat{f}_{n_l}(\sqrt{\lambda})|  < \frac{\varepsilon}{2}
	\qquad \text{for all } k, l \geq k_2(\varepsilon).
\end{align}
Since $f_n \in X_3$, we estimate for $|x| > R \geq 1$ and $k, l \geq k_2(\varepsilon)$ using~(d)
\begin{align*}
	&(1 + |x|^2)^\frac{1}{2}|u_{n_k}(x) - u_{n_l}(x)|
	\\
	&\quad \overset{\eqref{eq_R(eps)-3}}{\leq} 
	\sqrt{\frac{\pi}{2}} |\hat{f}_{n_k}(\sqrt{\lambda}) - \hat{f}_{n_l}(\sqrt{\lambda})|  \frac{(1 + |x|^2)^\frac{1}{2}}{|x|} 
	+ \frac{2(1 + |x|^2)^\frac{1}{2}}{\sqrt{\lambda} |x|^2} \cdot \norm{f_{n_k} - f_{n_l}}_{X_3}
	\\
	& \quad \leq \sqrt{\pi} |\hat{f}_{n_k}(\sqrt{\lambda}) - \hat{f}_{n_l}(\sqrt{\lambda})| 
	+ \frac{2\sqrt{2}}{\sqrt{\lambda} R} \cdot 2 \sup_{n \in \N} \norm{f_n}_{X_3}
	\\
	& \overset{\eqref{eq_R(eps)}, \eqref{eq_R(eps)-2}}{<}  \varepsilon.
\end{align*}
Combining this with~\eqref{eq_R(eps)-4}, we have 
\begin{align*}
	\norm{u_{n_k} - u_{n_l}}_{X_1} = \sup_{|x| \geq 0} (1 + |x|^2)^\frac{1}{2}|u_{n_k}(x) - u_{n_l}(x)| < \varepsilon
	\qquad \text{for all } k, l \geq \max\{ k_1(\varepsilon), k_2(\varepsilon)\}.
\end{align*}
Hence $(u_{n_k})_{k \in \N}$ is a Cauchy sequence in $X_1$, which implies $u_{n_k} \to u$ strongly in $X_1$.
\hfill$\square$

\subsection{Proof of Corollary~\ref{cor_asymptotic}}

Let $f \in X_3$ and $\omega \in (0, \pi)$.
First, given $w \in X_1$ with $w = \mathcal{R}_\lambda^\omega f$,  Proposition~\ref{prop_cont-comp}~(c) implies that $w \in C^2(\R^3)$ and $(- \Delta - \lambda) \: w  = f$ on $\R^3$. Proposition~\ref{prop_cont-comp}~(d) with $\alpha = 1, \tilde{\alpha} = \cot (\omega)$ further states
\begin{align*}
	w(x) &= \sqrt{\frac{\pi}{2}} \hat{f}(\sqrt{\lambda}) \cdot 
	\frac{\sin(\omega)\cos(|x|\sqrt{\lambda}) + \cos(\omega)\sin(|x|\sqrt{\lambda})}{\sin(\omega) |x|} + O\left(\frac{1}{|x|^2}\right)
	\\
	&= \sqrt{\frac{\pi}{2}} \frac{\hat{f}(\sqrt{\lambda})}{\sin(\omega)} \cdot 
	\frac{\sin(|x|\sqrt{\lambda} + \omega)}{|x|} + O\left(\frac{1}{|x|^2}\right)
\end{align*}
as $|x| \to \infty$, as asserted. Conversely, let us now assume that $w \in X_1$ solves $- \Delta w - \lambda w = f$ on $\R^3$ and satisfies
\begin{align*}
	w(x) = \gamma \cdot \frac{\sin(|x|\sqrt{\lambda} + \omega)}{|x|} + O\left(\frac{1}{|x|^2}\right)
\end{align*}
as $|x| \to \infty$ for some $\gamma \in \R$. Then $w - \mathcal{R}_\lambda^\omega f$ is a radially symmetric solution of the homogeneous Helmholtz equation, hence $w - \mathcal{R}_\lambda^\omega f = \tilde{\alpha} \cdot \tilde{\Psi}_\lambda$ for some $\tilde{\alpha} \in \R$.
Due to Proposition~\ref{prop_cont-comp}~(d), 
\begin{align*}
	\tilde{\alpha} \cdot \tilde{\Psi}_\lambda(x)
	= w(x) - \mathcal{R}_\lambda^\omega f (x)
	= \left( \gamma - \sqrt{\frac{\pi}{2}} \hat{f}(\sqrt{\lambda}) \right) 
	\cdot \frac{\sin(|x|\sqrt{\lambda} + \omega)}{|x|} + O\left(\frac{1}{|x|^2}\right)
\end{align*}
as $|x| \to \infty$, which implies $\tilde{\alpha} = 0$ and hence $w =\mathcal{R}_\lambda^\omega f$.
\hfill$\square$

\subsection{Proof of Corollary~\ref{cor_alpha-beta}}
\begin{itemize}
\item[(a)] 
Assuming that $w \in X_1$ is a $C^2$ solution of $- \Delta w - \lambda w = f$ on $\R^3$, Proposition~\ref{prop_cont-comp}~(c) implies that $w - \Psi_\lambda \ast f$ is a radially symmetric solution of the homogeneous equation $- \Delta v - \lambda v = 0$. Hence $w - \Psi_\lambda \ast f = \tilde{\alpha} \cdot \tilde{\Psi}_\lambda$, $w \in U_1(\lambda)$, and $\tilde{\alpha} = \alpha^{(\nu)}(w)$ by definition of the functional $\alpha^{(\nu)}$. Conversely, assuming that $w = \Psi_\lambda \ast f + \alpha^{(\nu)}(w) \cdot \tilde{\Psi}_\lambda$, the assertion follows from Proposition~\ref{prop_cont-comp}~(c).
\item[(b)]
If we assume that $w \in X_1$ is a $C^2$ solution of 
\begin{align*}
	- \Delta w - \lambda w = f \quad \text{on } \R^3, 
	\qquad
	w(x) = \gamma \: \frac{\sin(|x|\sqrt{\lambda})}{|x|} + O\left(\frac{1}{|x|^2}\right) \quad \text{as } |x| \to \infty,
\end{align*}
we infer $w = \Psi_\lambda \ast f + \alpha^{(\nu)}(w) \cdot \tilde{\Psi}_\lambda$ from (a) and $\beta^{(\nu)}(w) = 0$ from the defining equations~\eqref{eq_alpha-beta} since $w \in U_1(\lambda)$. On the other hand, given that $w = \Psi_\lambda \ast f + (\alpha^{(\nu)}(w) + \sigma \beta^{(\nu)}(w)) \cdot \tilde{\Psi}_\lambda$, we deduce that $w$ is twice continuously differentiable with $- \Delta w - \lambda w = f$ from Proposition~\ref{prop_cont-comp}~(c). By assumption, $w \in U_1(\lambda)$, and applying $\alpha^{(\nu)}$ to the identity $w = \Psi_\lambda \ast f + (\alpha^{(\nu)}(w) + \sigma \beta^{(\nu)}(w)) \cdot \tilde{\Psi}_\lambda$ yields $\beta^{(\nu)}(w) = 0$ thanks to the equations~\eqref{eq_alpha-beta-coeff}. 
\end{itemize}
\hfill $\square$

\subsection{Proof of Proposition~\ref{prop_pruefer}}

Let $g \in X_2$. Then the profile $w: [0, \infty) \to \R$ is a (global) solution of the initial value problem~\eqref{eq_single-pruefer} if and only if $y: [0, \infty) \to \R, y(r) = r \cdot w(r)$ solves
\begin{align}\label{eq_pertHO}
	\begin{cases}
		-y'' - \lambda y = g(r) \cdot y	& \text{on } (0, \infty),
		\\
		y(0) = 0, y'(0) = 1.
	\end{cases}
\end{align}
Moreover, $w \in X_1$ if $y$ is bounded. Global existence and uniqueness of such $y \in C^2([0, \infty))$ are consequences of the Picard-Lindelöf Theorem and of Gronwall's Lemma since $g \in L^1([0, \infty))$. 
Our proof of boundedness of $y$ and of the asserted asymptotic expansions is inspired by perturbation results due to Hartman in~\cite{hartman}. It is an application of the Prüfer transformation, see equation~(2.1)~in~\cite{hartman}. Since $y \not\equiv 0$, uniqueness implies that $y(r)^2 + y'(r)^2 > 0$ for all $r \geq 0$. We thus parametrize using polar coordinates in the phase space
\begin{align}\label{eq_pruefer-para}
	y(r) = \varrho(r) \cdot \sin(\phi(r) \sqrt{\lambda}),
	\qquad
	y'(r) = \varrho(r) \cdot \sqrt{\lambda} \cos(\phi(r) \sqrt{\lambda})
	\qquad
	(r \geq 0)
\end{align}
with functions $\varrho: [0, \infty) \to (0, \infty)$ and $\phi: [0, \infty) \to \R$. A short calculation shows that we thus obtain a solution of~\eqref{eq_pertHO} if and only if $\varrho$ and $\phi$ satisfy the first-order system
\begin{align}\label{eq_pruefer}
	\begin{cases}
		(\log \varrho)' = - \frac{g(r)}{2\sqrt{\lambda}} \sin(2 \phi \sqrt{\lambda})	& \text{on } (0, \infty),
		\\
		\phi' = 1 + \frac{g(r)}{\lambda} \sin^2(\phi \sqrt{\lambda})					& \text{on } (0, \infty),
		\\
		\varrho(0) = \frac{1}{\sqrt{\lambda}}, \quad \phi(0) = 0.
	\end{cases}
\end{align}
Eqivalently, for $r \geq 0$,
\begin{equation}\label{eq_pruefer-solutions}
\begin{split}
	\varrho (r) 
	&= \frac{1}{\sqrt{\lambda}} \cdot 
	\exp\left(- \int_0^r \frac{g(t)}{2\sqrt{\lambda}} \sin(2 \phi(t) \sqrt{\lambda}) \: \mathrm{d}t\right),
	\\
	\phi (r)
	&= r + \int_0^r \frac{g(t)}{\lambda} \sin^2(\phi(t) \sqrt{\lambda}) \: \mathrm{d}t.
\end{split}
\end{equation}
We will frequently refer to the estimate
\begin{align}\label{eq_pruefer-estimate-rho}
	\forall \: r \geq 0 \qquad
	\frac{1}{\sqrt{\lambda}} 
	\exp\left(\left| \int_0^r \frac{g(t)}{2\sqrt{\lambda}} \sin(2 \phi(t) \sqrt{\lambda}) \: \mathrm{d}t\right|\right)
	\leq \frac{1}{\sqrt{\lambda}} \exp\left(\frac{\pi}{4\sqrt{\lambda}} \:  \norm{g}_{X_2}\right) =: C_g.
\end{align}
Indeed, it immediately yields boundedness of the solution since, due to $g \in X_2$,
\begin{align*}
	|y(r)| \overset{\eqref{eq_pruefer-para}}{\leq} |\varrho(r)| 
	&\overset{\eqref{eq_pruefer-solutions}}{\leq}
	\frac{1}{\sqrt{\lambda}} \cdot 
	\exp\left(\left| \int_0^r \frac{g(t)}{2\sqrt{\lambda}} \sin(2 \phi(t) \sqrt{\lambda})	\: \mathrm{d}t \right|  \right) 
	\overset{\eqref{eq_pruefer-estimate-rho}}{\leq} C_g.
\end{align*}
Analogously, we see that the improper integrals in
\begin{align*}
	\omega_\lambda(g) := \int_0^\infty \frac{g(t)}{\sqrt{\lambda}} \sin^2(\phi(t) \sqrt{\lambda}) \: \mathrm{d}t
	\quad \text{and} \quad
	\varrho_\lambda(g) := \frac{1}{\sqrt{\lambda}} \cdot 
	\exp\left(- \int_0^\infty \frac{g(t)}{2\sqrt{\lambda}} \sin(2 \phi(t) \sqrt{\lambda}) \: \mathrm{d}t\right)
\end{align*}
converge, observe $\varrho_\lambda (g) > 0$, and verify the asserted asymptotic behavior of $y$ as $r \to \infty$:
\begin{align*}
	&\left| y(r) - \varrho_\lambda(g) \sin(r \sqrt{\lambda} + \omega_\lambda(g)) \right|
	\\
	& \quad \overset{\eqref{eq_pruefer-para}}{=} \left| \varrho(r) \sin(\phi(r) \sqrt{\lambda})
	- \varrho_\lambda(g) \sin(r \sqrt{\lambda} + \omega_\lambda(g)) \right|
	\\
	& \quad \leq
	\left| \left[ \varrho(r)  - \varrho_\lambda(g)\right] \sin(\phi(r) \sqrt{\lambda}) \right|
	+ \left| \varrho_\lambda(g)  \left[\sin(\phi(r) \sqrt{\lambda}) 
	- \sin(r \sqrt{\lambda} + \omega_\lambda(g)) \right] \right|
	\\
	& \quad \overset{\eqref{eq_pruefer-estimate-rho}}{\leq}
	C_g \cdot \left(  
	\left| \frac{\varrho(r)}{\varrho_\lambda(g)} - 1 \right|
	+ 	 \left|\sin(\phi(r) \sqrt{\lambda}) - \sin(r \sqrt{\lambda} + \omega_\lambda(g)) \right| \right)
\end{align*}
where we estimate both terms as follows
\begin{align*}
	\left| \frac{\varrho(r)}{\varrho_\lambda(g)} - 1 \right|
	& 
	\overset{\eqref{eq_pruefer-solutions}}{=} \left|
	\exp\left(\int_r^\infty \frac{g(t)}{2\sqrt{\lambda}} \sin(2 \phi(t) \sqrt{\lambda}) \: \mathrm{d}t\right) - 1
	\right|
	\\
	&\overset{\eqref{eq_pruefer-estimate-rho}}{\leq} 
	C_g \cdot 
	\left| \int_r^\infty \frac{g(t)}{2\sqrt{\lambda}} \sin(2 \phi(t) \sqrt{\lambda}) \: \mathrm{d}t \right|
	\\
	&\leq
	C_g \cdot \frac{\norm{g}_{X_2}}{2\sqrt{\lambda}} \cdot
	\int_r^\infty \frac{\mathrm{d}t}{1 + t^2}
	\\
	&\leq
	C_g \cdot \frac{\norm{g}_{X_2}}{2\sqrt{\lambda}} \cdot \frac{1}{r},
	\\
	\left|\sin(\phi(r) \sqrt{\lambda}) - \sin(r \sqrt{\lambda} + \omega_\lambda(g)) \right|
	& \leq
	\left|\phi(r) \sqrt{\lambda} - r \sqrt{\lambda} - \omega_\lambda(g) \right|
	\\
	& =
	\left|\int_r^\infty \frac{g(t)}{\sqrt{\lambda}} \sin^2(\phi(t) \sqrt{\lambda}) \: \mathrm{d}t \right|
	\\
	& \leq
	\frac{\norm{g}_{X_2}}{\sqrt{\lambda}} \cdot 
	\int_r^\infty \frac{\mathrm{d}t}{1 + t^2}  
	\\
	& \leq
	\frac{\norm{g}_{X_2}}{\sqrt{\lambda}} \cdot 
	\frac{1}{r}.
\end{align*}
Thus $y(r) - \varrho_\lambda(g) \sin(r \sqrt{\lambda} + \omega_\lambda(g)) = O\left( \frac{1}{r} \right)$ as $r \to \infty$. Similarly one can show that $y'(r) - \varrho_\lambda(g) \sqrt{\lambda} \cos(r \sqrt{\lambda} + \omega_\lambda(g)) = O\left( \frac{1}{r} \right)$ as $r \to \infty$. Since $y(r) = r \cdot w(r)$, the Proposition is proved.
\hfill $\square$

\subsection{Proof of Proposition~\ref{prop_aspt-phase-continuous}}

We consider $g_n, g_0 \in X_2$ with $g_n \to g_0 \in X_2$ and aim to show that $\omega_\lambda(g_n) \to \omega_\lambda(g_0)$.
By $\phi_n \in C^1((0, \infty)) \cap C([0, \infty))$, we denote the unique solution of
\begin{align*}
	\phi_n' = 1 + \frac{g_n(r)}{\lambda} \sin^2(\phi_n \sqrt{\lambda}),
	\qquad
	\phi_n(0) = 0.
\end{align*}
Then we have pointwise convergence, $\phi_n(r) \to \phi_0(r)$ for all $r \geq 0$. Indeed, let us fix any $R > 0$ and estimate for $0 \leq r \leq R$ and $n \in \N$
\begin{align*}
	\left| \phi_n(r) - \phi_0(r) \right|
	& = 
	\left|\int_0^r  \frac{g_n(t)}{\lambda} \sin^2(\phi_n(t) \sqrt{\lambda}) - \frac{g_0(t)}{\lambda} \sin^2(\phi_0(t) \sqrt{\lambda})
	\: \mathrm{d}t \right|
	\\
	& \leq
	\frac{1}{\lambda} \int_0^r  \left| g_n(t) - g_0(t) \right| \: \mathrm{d}t
	+ 	\frac{1}{\lambda} \int_0^r  \left|g_0(t) \right| \cdot
	\left| \sin^2(\phi_n(t) \sqrt{\lambda}) - \sin^2(\phi_0(t) \sqrt{\lambda}) \right| \: \mathrm{d}t
	\\
	& \leq
	\frac{1}{\lambda} \int_0^\infty  \norm{g_n - g_0}_{X_2} \: \frac{\mathrm{d}t}{1+t^2}
	+ 	\frac{2 \norm{g_0}_\infty}{\sqrt{\lambda}} \int_0^r 
	\left| \phi_n(t)  - \phi_0(t)  \right| \: \mathrm{d}t
	\\
	& \leq
	\frac{\pi}{2 \lambda} \norm{g_n - g_0}_{X_2}
	+ 	\frac{2 \norm{g_0}_\infty}{\sqrt{\lambda}} \int_0^r 
	\left| \phi_n(t)  - \phi_0(t)  \right| \: \mathrm{d}t.
\end{align*}
Thus, by Gronwall's Lemma, we have for $0 \leq r \leq R$
\begin{align*}
	\left| \phi_n(r) - \phi_0(r) \right| 
	\leq \frac{\pi}{2 \lambda} \norm{g_n - g_0}_{X_2}
	 \cdot \mathrm{e}^{	\frac{2 \norm{g_0}_\infty}{\sqrt{\lambda}} \: r}
	\leq \frac{\pi}{2 \lambda} \norm{g_n - g_0}_{X_2}
	 \cdot \mathrm{e}^{	\frac{2 \norm{g_0}_\infty}{\sqrt{\lambda}} \: R}.
\end{align*}
Since $g_n \to g_0$ in $X_2$, we conclude $\phi_n \to \phi_0$ locally uniformly on $[0, \infty)$, in particular pointwise.
Now we can deduce the convergence of the asymptotic phase,
\begin{align*}
	\omega_\lambda(g_n) = \frac{1}{\sqrt{\lambda}} \int_0^\infty g_n(r) \sin^2(\phi_n(r)\sqrt{\lambda}) \, \mathrm{d}r
	\to \frac{1}{\sqrt{\lambda}} \int_0^\infty g_0(r) \sin^2(\phi_0(r)\sqrt{\lambda}) \, \mathrm{d}r = \omega_\lambda(g_0),
\end{align*}
which is a consequence of the Dominated Convergence Theorem.
Indeed, the integrands converge pointwise and are integrably majorized by
\begin{align*}
	\left| g_n(r) \sin^2(\phi_n(r)\sqrt{\lambda}) \right|
	\leq \frac{\sup_{n \in \N} \norm{g_n}_{X_2}}{1 + r^2} 
	\qquad
	(r \geq 0).
\end{align*}
\hfill $\square$

\subsection{Proof of Proposition~\ref{prop_asymptoticphase}}

Let us first recall that, given the assumptions of Proposition~\ref{prop_asymptoticphase}, equation~\eqref{eq_aspt-phase} implies for $b \in \R$
\begin{align*}
	\omega_\lambda(b \, u_0^2) = \frac{b}{\sqrt{\lambda}} \int_0^\infty u_0^2(r) \sin^2(\phi_b(r) \sqrt{\lambda}) \: \mathrm{d}r
\end{align*}
where $\phi_b$ satisfies $\phi_b' = 1 + \frac{b}{\lambda} u_0^2(r) \sin^2(\phi_b \sqrt{\lambda})$ on $(0, \infty)$, $\phi_b(0) = 0$.
We immediately see that $\omega_\lambda(0) = 0$ and $\text{sgn } \omega_\lambda(b \, u_0^2) = \text{sgn }(b)$ for all $b \in \R \setminus \{ 0 \}$. Further, continuity of $b \mapsto \omega_\lambda(b \, u_0^2)$ is a consequence of Proposition~\ref{prop_aspt-phase-continuous}. The assertions are proved once we show that $b \mapsto \omega_\lambda(b \, u_0^2)$ is strictly increasing with
\begin{align*}
	\omega_\lambda(b \, u_0^2) \to \pm \infty
	\qquad
	\text{as } b \to \pm \infty.
\end{align*}

\step{1}{Strict monotonicity.}
We let $b_1 < b_2$, define 
\begin{align*}
	\chi(r) := \begin{cases}
	\frac{\sin^2(\phi_{b_2}(r)\sqrt{\lambda}) - \sin^2(\phi_{b_1}(r)\sqrt{\lambda})}{\phi_{b_2}(r)\sqrt{\lambda} - \phi_{b_1}(r)\sqrt{\lambda}}
	& \text{if } \phi_{b_2}(r) \neq \phi_{b_1}(r),
	\\
	2 \sin(\phi_{b_1}(r)\sqrt{\lambda}) \cos(\phi_{b_1}(r)\sqrt{\lambda}) & \text{else}
	\end{cases}
\end{align*}
and observe that $\chi$ is bounded with $0 \leq |\chi(r)| \leq 2$ and continuous. $\psi := \phi_{b_2} - \phi_{b_1}$ satisfies
\begin{align*}
	\psi' = 
	\frac{b_2 - b_1}{\lambda} u_0^2(r) \sin^2(\phi_{b_2}(r) \sqrt{\lambda}) +  \frac{b_1}{\sqrt{\lambda}} u_0^2(r) \chi(r) \psi,
	\qquad
	\psi(0) = 0.
\end{align*}
The unique solution is given by the Variation of Constants formula. We have
\begin{align*}
	&\omega_\lambda(b_2 \, u_0^2) - \omega_\lambda(b_1 \, u_0^2) 
	= \lim_{r \to \infty} \psi(r)
	 = \int_0^\infty \frac{b_2 - b_1}{\lambda} u_0^2(\varrho) \sin^2(\phi_{b_2}(\varrho) \sqrt{\lambda})
	\mathrm{e}^{\int_\varrho^\infty \frac{b_1}{\sqrt{\lambda}} u_0^2(\tau) \chi(\tau) \: \mathrm{d}\tau} \: \mathrm{d}\varrho > 0
\end{align*}
since the integrand is nonnegative and not identically zero.

\step{2}{Asymptotic behavior as $b \to \infty$.}

By the uniqueness statement of the Picard-Lindelöf Theorem, $u_0 \not\equiv 0$ requires $u_0(0) \neq 0$. 
We can thus choose $r_0 > 0$ with 
\begin{align}\label{eq_proofAP-r0}
	\frac{1}{2} u_0^2(0) < u_0^2(r) < \frac{3}{2} u_0^2(0)
	\qquad 
	\text{for all } r \in [0, r_0].
\end{align}
We intend to use a comparison technique. By definition of $r_0$, we have for $b > 0$
\begin{align*}
	\phi_b' = 1 + \frac{b}{\lambda} u_0^2(r) \sin^2(\phi_b \sqrt{\lambda}) 
	\overset{\eqref{eq_proofAP-r0}}{\geq} 1 + \frac{b}{2 \lambda} u_0^2(0) \sin^2(\phi_b \sqrt{\lambda}) 
	\quad \text{ on } [0, r_0],
	\qquad
	\phi_b(0) = 0.
\end{align*}
We now study the modified initial value problem
\begin{align*}
	\psi_b' = 1 + \frac{b}{2 \lambda} u_0^2(0) \sin^2(\psi_b \sqrt{\lambda})
	\quad \text{ on } [0, r_0],
	\qquad
	\psi_b(0) = 0.
\end{align*}
For $0 \leq r \leq r_0$ with $r \not\in \frac{\pi}{2} + \pi \Z$, its unique solution is given by the expression
\begin{equation}\label{eq_proofODE-psib-formula}
\begin{split}
	&\psi_b(r) =
	\frac{1}{\sqrt{\lambda}} \left[ 
	n \pi +   \arctan \left( \frac{\tan \left( r \sqrt{\lambda} \: \sqrt{1 +\frac{b}{2 \lambda} u_0^2(0)} \right)}
	{\sqrt{1 + \frac{b}{2 \lambda} u_0^2(0)}} \right)
	\right]
	\\
	&\text{if } n \in \N_0, \: \: \left|\sqrt{1 + \frac{b}{2 \lambda} u_0^2(0)} \sqrt{\lambda} \: r - n \pi \right| < \frac{\pi}{2}.
\end{split}
\end{equation}
Then $\phi_b \geq \psi_b$ on $[0, r_0]$, in particular $\phi_b(r_0) \geq \psi_b(r_0)$.
Substituting $t := \sqrt{\lambda} \phi_b(r)$, we estimate
\begin{align*}
	\omega_\lambda(b \, u_0^2)
	&= \frac{b}{\sqrt{\lambda}}  \int_0^\infty u_0^2(r) \sin^2(\phi_b(r) \sqrt{\lambda}) \: \mathrm{d}r
	\\
	&\overset{\eqref{eq_proofAP-r0}}{\geq} \frac{b u_0^2(0)}{2 \sqrt{\lambda}} \int_0^{r_0} \sin^2(\phi_b(r) \sqrt{\lambda}) \: \mathrm{d}r
	\\
	&= \frac{b u_0^2(0)}{2 \sqrt{\lambda}} 
	\int_0^{\sqrt{\lambda} \phi_b(r_0)} \sin^2(t) \: \frac{\mathrm{d}t}{\lambda^\frac{1}{2} \phi_b'(\phi_b^{-1}(\lambda^{-\frac{1}{2}}t))}
	\\
	&\overset{\eqref{eq_proofAP-r0}}{\geq} \frac{b u_0^2(0)}{2 \lambda} 
	\int_0^{\sqrt{\lambda} \phi_b(r_0)} \sin^2(t) \: \frac{\mathrm{d}t}{1 + \frac{3}{2} \frac{b}{\lambda} u_0^2(0)}
	\\
	&= \frac{b u_0^2(0)}{2 \lambda + 3 b u_0^2(0)} 
	\int_0^{\sqrt{\lambda} \phi_b(r_0)} \sin^2(t) \: \mathrm{d}t
	\\
	&\geq \frac{b u_0^2(0)}{2 \lambda + 3 b u_0^2(0)} 
	\int_0^{\sqrt{\lambda} \psi_b(r_0)} \sin^2(t) \: \mathrm{d}t
\end{align*}
and hence $\omega_\lambda(b \, u_0^2) \to \infty$ as $b \to \infty$ since formula~\eqref{eq_proofODE-psib-formula} implies $\psi_b(r_0) \to \infty$ as $b \to \infty$.

\step{3}{Asymptotic behavior as $b \to - \infty$.}

For $b < -1$, we introduce 
\begin{align*}
	r_b := \max \left\{ r > 0 \: \bigg| \: \phi_b(r) = \frac{1}{\sqrt{\lambda}} \arcsin (|b|^{-\frac{1}{4}}) \right\}.
\end{align*}
Then $r_b \in (0, \infty)$ is well-defined since, due to $1 - \frac{|b|}{\lambda} \frac{\norm{u_0}^2_{X_1}}{1 + r^2} \leq \phi_b' \leq 1$ and $\phi_b(0) = 0$, we have $r - \frac{|b|}{\lambda} \norm{u_0}^2_{X_1} \arctan(r) \leq \phi_b(r) \leq r$ for $r \geq 0$. In particular, we have 
\begin{align}\label{eq_proofRB}
	\phi_b(r_b) = \frac{1}{\sqrt{\lambda}} \arcsin (|b|^{-\frac{1}{4}})
	\quad \text{and} \quad
	\phi_b(r) > \frac{1}{\sqrt{\lambda}} \arcsin (|b|^{-\frac{1}{4}}) \: \text{for all } r > r_b.
\end{align}

We prove below that $r_b \to \infty$ as $b \to - \infty$. 
Then for $r \geq r_b$, equation~\eqref{eq_proofRB} and $\phi_b' \leq 1$ imply
\begin{align*}
	\phi_b(r) \leq  \phi_b(r_b) + (r - r_b)
	= r + \left( \frac{1}{\sqrt{\lambda}} \arcsin (|b|^{-\frac{1}{4}}) - r_b\right).
\end{align*}
Then the asymptotic phase satisfies
\begin{align*}
	\omega_\lambda(b \, u_0^2) = \sqrt{\lambda} \cdot \lim_{r \to \infty} (\phi_b(r) - r)
	\leq \arcsin (|b|^{-\frac{1}{4}}) - r_b \sqrt{\lambda} \to - \infty
	\quad
	\text{as } b \to -\infty.
\end{align*}
It remains to prove that $r_b \to \infty$ as $b \to - \infty$. 
We assume by contradiction that we find a subsequence $(b_k)_{k \in \N}$ and $\tilde r > 0$ with
$	b_k \searrow - \infty $, $	r_{b_k} \to \tilde r $ as $k \to \infty$.
Then, since $\phi_{b_k}' \leq 1$ and due to equation~\eqref{eq_proofRB}, we have 
\begin{align*}
	\frac{1}{\sqrt{\lambda}} \arcsin (|b_k|^{-\frac{1}{4}}) \leq \phi_{b_k}(r) 
	\leq \frac{1}{\sqrt{\lambda}} \arcsin (|b_k|^{-\frac{1}{4}}) + \frac{1}{\sqrt{\lambda}}
	\qquad
	\text{for } r_{b_k} \leq r \leq r_{b_k} +  \frac{1}{\sqrt{\lambda}}, \: k \in \N.
\end{align*}
Since  $|b_k| \to \infty$ we may assume w.l.o.g. $\frac{1}{\sqrt{\lambda}} \arcsin (|b_k|^{-\frac{1}{4}}) \leq \phi_{b_k}(r) 
	\leq \frac{1}{\sqrt{\lambda}} \cdot \frac{\pi}{2}$, hence
\begin{align}\label{eq_proofRB-est}
	\sin(\phi_{b_k}(r) \sqrt{\lambda}) \geq |b_k|^{-\frac{1}{4}}
	\qquad
	\text{for } r_{b_k} \leq r \leq r_{b_k} +  \frac{1}{\sqrt{\lambda}}, \: k \in \N.
\end{align}
We conclude, as $k \to \infty$, 
\begin{align*}
	\phi_{b_k}\left(r_{b_k} + \frac{1}{\sqrt{\lambda}} \right) 
	&= \phi_{b_k}(r_{b_k}) + \int_0^{\frac{1}{\sqrt{\lambda}}}  \phi_{b_k}'(r_{b_k} + \tau) \: \mathrm{d}\tau 
	\\
	&\overset{\eqref{eq_proofRB}}{=} \frac{1}{\sqrt{\lambda}} \arcsin (|b_k|^{-\frac{1}{4}})
	+ \int_0^{\frac{1}{\sqrt{\lambda}}} \left[ 1 - \frac{|b_k|}{\lambda} u_0^2(r_{b_k} + \tau) 
	\sin^2(\phi_{b_k}(r_{b_k} + \tau) \sqrt{\lambda})  \right]
	\: \mathrm{d}\tau 
	\\
	&\overset{\eqref{eq_proofRB-est}}{\leq} \frac{2}{\sqrt{\lambda}} + \frac{1}{\sqrt{\lambda}} 
	-   \frac{\sqrt{|b_k|}}{\lambda} \cdot \int_0^{\frac{1}{\sqrt{\lambda}}} u_0^2(r_{b_k} + \tau) \: \mathrm{d}\tau 
	\\
	&= \frac{3}{\sqrt{\lambda}} 
	- \frac{\sqrt{|b_k|}}{\lambda} \cdot \left( \int_0^{\frac{1}{\sqrt{\lambda}}} u_0^2(\tilde{r} + \tau) \: \mathrm{d}\tau  + o(1) \right)
	\\
	& \to - \infty
\end{align*}
since $u_0^2 > 0$ almost everywhere. 
On the other hand that, for every $k \in \N$, the differential equation $\phi' = 1 + \frac{b_k}{\lambda} u_0^2(r) \sin^2(\phi \sqrt{\lambda})$ states that $\phi_{b_k}(r) = 0$ implies $\phi_{b_k}'(r) = 1$ and thus $\phi_{b_k}$ cannot attain negative values, which contradicts the limit calculated before.
\hfill $\square$

\subsection{Proof of Proposition~\ref{prop_spectrum}}

For $\omega \in (0, \pi)$ and $\lambda > 0$, we compute the spectrum of the linear operator
\begin{align*}
	\mathbf{R}_\lambda^\omega: X_1 \to X_1, \quad w \mapsto \mathcal{R}_\lambda^\omega [u_0^2 \, w]
	= \left( \Psi_\lambda + \cot(\omega) \tilde{\Psi}_\lambda \right) \ast [u_0^2 \, w].
\end{align*}
Compactness of $\mathbf{R}_\lambda^\omega$ is a consequence of Proposition~\ref{prop_cont-comp}~(b). Then immediately 
$\sigma(\mathbf{R}_\lambda^\omega) = \{ 0 \} \cup \sigma_p(\mathbf{R}_\lambda^\omega)$ with discrete eigenvalues of finite multiplicity. 

\step{1}{Eigenvalues.}

We find the eigenfunctions of $\mathbf{R}_\lambda^\omega$, that is, we look for such $\eta \in \R, \eta \neq 0$ and $w \in X_1$ that 
\begin{align*}
	\mathbf{R}_\lambda^\omega w = \eta \cdot w.
\end{align*}
Corollary~\ref{cor_asymptotic} implies that this is equivalent to $\eta \in \R, \eta \neq 0$ and $w \in X_1 \cap C^2(\R^3)$,
\begin{align*}
	&- \Delta w - \lambda w = \frac{1}{\eta} \cdot u_0^2(x) \: w \: \: \text{on } \R^3
	\\
	&\text{with } w(x) = c_w \frac{\sin(|x|\sqrt{\lambda} + \omega)}{|x|} + O\left( \frac{1}{|x|^2} \right)
	\text{ as } |x| \to \infty
\end{align*}
for some $c_w \in \R$.
By Proposition~\ref{prop_pruefer}, such an eigenfunction exists if and only if 
\begin{align*}
	\omega_\lambda\left(\frac{1}{\eta} \, u_0^2 \right) = \omega + k \pi
	\quad
	\text{for some } k \in \Z;
\end{align*}
in this case, $c_w \neq 0$ and every eigenspace is one-dimensional since the radially symmetric solution $w$ is unique up to multiplication by a constant.
Since we have seen in Proposition~\ref{prop_asymptoticphase} that $\R \to \R, \: b \mapsto \omega_\lambda(b \, u_0^2)$ is strictly increasing and onto, we can define $b_k(\omega, \lambda, u_0^2)$ via $\omega_\lambda(b_k(\omega, \lambda, u_0^2)\, u_0^2) = \omega + k \pi$ for all $k \in \Z$, and conclude
\begin{align*}
	\sigma_p(\mathbf{R}_\lambda^\omega) = \left\{ \frac{1}{b_k(\omega, \lambda, u_0^2)} \bigg| k \in \Z \right\}.
\end{align*}

\step{2}{Simplicity.}

It remains to show that the eigenvalues are algebraically simple. We consider an eigenvalue $\eta := \frac{1}{b_k(\omega, \lambda, u_0^2)}$ of $\mathbf{R}_\lambda^\omega$ with eigenspace $\ker \left( \mathbf{R}_\lambda^\omega - \eta I_{X_1} \right) = \text{span } \{ w \}$. We have to prove that
\begin{align*}
	\ker \left( \mathbf{R}_\lambda^\omega - \eta I_{X_1} \right)^2 = 
	\ker \left( \mathbf{R}_\lambda^\omega - \eta I_{X_1} \right).
\end{align*}
So let now $v \in \ker \left( \mathbf{R}_\lambda^\omega - \eta I_{X_1} \right)^2$. We assume for contradiction that $v \not\in \ker\left( \mathbf{R}_\lambda^\omega - \eta I_{X_1} \right)$. 

By assumption on $v$, we have $\mathbf{R}_\lambda^\omega v - \eta v \in \ker \left( \mathbf{R}_\lambda^\omega - \eta I_{X_1} \right) \setminus \{ 0 \}$, and since $\eta \neq 0$ we may assume without loss of generality
\begin{align*}
	\mathbf{R}_\lambda^\omega v - \eta v = - \eta w = - \mathbf{R}_\lambda^\omega w.
\end{align*}
We observe that $v, w \in C^2(\R^3)$ by Proposition~\ref{prop_cont-comp} as well as
\begin{align}\label{eq_proof-yz}
	- w'' - \frac{2}{r} w' - \lambda w = \frac{1}{\eta} \:  u_0^2(r) \cdot w,
	\quad
	- v'' - \frac{2}{r} v' - \lambda v = \frac{1}{\eta} \:  u_0^2(r) \cdot (v + w)
	\qquad \text{on } (0, \infty).
\end{align}
Furthermore, Proposition~\ref{prop_cont-comp} (d) implies
\begin{equation}\label{eq_proof-yz-asymptotic}
\begin{split}
	&w(r) = c_w \cdot \frac{\sin(r \sqrt{\lambda} + \omega)}{r} + O\left(\frac{1}{r^2}\right),
	\quad
	w'(r) = c_w \sqrt{\lambda} \cdot \frac{\cos(r \sqrt{\lambda} + \omega)}{r} + O\left(\frac{1}{r^2}\right),
	\\
	&v(r) = c_v \cdot \frac{\sin(r \sqrt{\lambda} + \omega)}{r} + O\left(\frac{1}{r^2}\right),
	\quad
	v'(r) = c_v \sqrt{\lambda} \cdot \frac{\cos(r \sqrt{\lambda} + \omega)}{r} + O\left(\frac{1}{r^2}\right)
\end{split}
\end{equation}
for some $c_w, c_v \in \R$.
Let us define $q(r) = r^2 (w'(r) v(r) - v'(r) w(r))$ for $r \geq 0$. Then, using the differential equations~\eqref{eq_proof-yz}, we find
$q'(r) = \frac{1}{\eta} \: r^2 u_0^2(r) \cdot w^2(r)$ for $r \geq 0$. Hence, depending on the sign of $\eta$, $q$ is monotone on $[0, \infty)$ with $q(0) = 0$.  On the other hand, the asymptotic expansions in~\eqref{eq_proof-yz-asymptotic} imply that $q(r) = O\left(\frac{1}{r}\right)$ as $r \to \infty$. We conclude $q(r) = 0$ for all $r \geq 0$.
Since all zeros of $w$ are simple, one can deduce that $v(r) = c_0 \cdot w(r)$ for all $r \geq 0$ and some $c_0 \in \R$, and thus $v \in \ker\left( \mathbf{R}_\lambda^\omega - \eta I_{X_1} \right)$, a contradiction. 
\hfill $\square$

\subsection{Proof of Proposition~\ref{prop_u0-sigma0-tau0}}

The existence of a continuum of radially symmetric solutions $u_0 \in X_1 \cap C^2(\R^3)$ of the nonlinear Helmholtz equation~\eqref{eq_singleHH} has been shown in Theorem~1.2 of~\cite{radial}. Given such a solution $u_0$, the asymptotic expansion is a consequence of Proposition~\ref{prop_pruefer} applied to equation~\eqref{eq_single-pruefer} with $g := u_0^2 \in X_2$, $\lambda := \mu$ and with unique solution $w := \frac{u_0}{u_0(0)}$. Proposition~\ref{prop_pruefer} also provides a unique radially symmetric solution $w_1 \in X_1 \cap C^2(\R^3)$ of $- \Delta w - \mu w = 3 u_0^2(x) \: w$ on $\R^3$, $w(0) = 1$.
As for $u_0$, the asymptotic behavior of $w_1$ is
\begin{align*}
	w_1(x) = \gamma \cdot \frac{\sin(|x|\sqrt{\mu} + \tau_0)}{|x|} + O\left(\frac{1}{|x|^2}\right)
	\quad \text{as } |x| \to \infty
\end{align*}
for some $\gamma \neq 0$ and $\tau_0 \in [0, \pi)$; then, $w_0 := \frac{w_1}{\gamma}$ has the asserted properties.
\hfill $\square$

\section*{Acknowledgements}

The authors gratefully acknowledge financial support by the Deutsche Forschungsgemeinschaft (DFG) through CRC 1173 ''Wave phenomena: analysis and numerics''. 

\bibliographystyle{abbrv}	
\bibliography{Literatur_Bi} 
  
\end{document}